\let\oldmarginpar\marginpar
\renewcommand\marginpar[1]{\-\oldmarginpar[\raggedleft\footnotesize #1]%
{\raggedright\footnotesize #1}}
\theoremstyle{plain}
\newtheorem{thm}[equation]{Theorem}
\newtheorem{lem}[equation]{Lemma}
\newtheorem{prop}[equation]{Proposition}
\theoremstyle{definition}
\newtheorem{defn}[equation]{Definition}
\theoremstyle{remark}
\newtheorem{rem}[equation]{Remark}
\numberwithin{equation}{section}
\newcommand{\R}{\mathbb{R}}
\newcommand{\Rn}{\mathbb{R}^n}
\def\osc{\operatornamewithlimits{osc}}
\def\essinf{\operatornamewithlimits{ess\,inf}}
\def\supp{\operatornamewithlimits{supp}}
\renewcommand{\div}{\divop}
\renewcommand{\phi}{\varphi}
\def\le{\leqslant}
\def\leq{\leqslant}
\def\ge{\geqslant}
\def\geq{\geqslant}
\def\phi{\varphi}
\def\rho{\varrho}
\def\vartheta{\theta}
\newcommand{\holder}{H\"older }
\def\supp{\operatorname{supp}}
\def\esssup{\operatornamewithlimits{ess\,sup}}
\def\osc{\operatornamewithlimits{osc}}
\def\div{\qopname\relax o{div}}
\def\loc{{\rm loc}}
\newcommand{\ainc}[1]{\hyperref[defn:aInc]{{\normalfont(aInc){\ensuremath{_{#1}}}}}}
\newcommand{\inc}[1]{\hyperref[defn:Inc]{{\normalfont(Inc){\ensuremath{_{#1}}}}}}
\newcommand{\adec}[1]{\hyperref[defn:aDec]{{\normalfont(aDec){\ensuremath{_{#1}}}}}}
\newcommand{\dec}[1]{\hyperref[defn:Dec]{{\normalfont(Dec){\ensuremath{_{#1}}}}}}
\newcommand{\adeci}[1]{\hyperref[defn:aDeci]{{\normalfont(aDec){\ensuremath{_{#1}^\infty}}}}}
\newcommand{\azero}{\hyperref[defn:a0]{{\normalfont(A0)}}}
\newcommand{\aone}{\hyperref[defn:a1]{{\normalfont(A1)}}}
\newcommand{\wvaone}{\hyperref[defn:wva1]{{\normalfont(wVA1)}}}
\date{\today}
\begin{document}

\title[H\"older continuity of the minimizer of an obstacle problem]{H\"older continuity of the minimizer of an obstacle problem with generalized Orlicz growth}
\author{Arttu Karppinen, Mikyoung Lee}
\thanks{A. Karppinen was partially supported by Turku University Foundation. M. Lee was supported by the National Research Foundation of Korea (NRF) grant funded by the Korea Government (NRF-2019R1F1A1061295).}

\subjclass[2010]{35B65; 49N60, 35A15, 46E35}
\keywords{H\"older continuity, minimizer, obstacle problem, generalized Orlicz space, Musielak--Orlicz spaces, nonstandard growth}

% 35B65  	Smoothness and regularity of solutions to PDEs
%49N60  	Regularity of solutions in optimal control
%35A15  	Variational methods applied to PDEs
% 35J62  	Quasilinear elliptic equations
% 46E35  	Sobolev spaces and other spaces of "smooth'' functions, embedding theorems, trace theorems

\begin{abstract}
We prove local $C^{0,\alpha}$- and $C^{1,\alpha}$-regularity for the local solution to an obstacle problem with non-standard growth. These results cover as special cases standard, variable exponent, double phase and Orlicz growth.
\end{abstract}

\maketitle

%%%%%%%%%%%%%%%%%%%%%%%%%%%%%%%%%%%%%%%%%%%%%%%%%%%%%%%%

\section{Introduction}
The classical obstacle problem is motivated by the description of the equilibrium position of an elastic membrane lying above an obstacle. Its mathematical interpretation is to find minimizers of the elastic energy functional with the addition of a constraint that presents the obstacle. This model leads to the mathematical objects called variational inequalities. In this regard, obstacle problem  is deeply related to the study of the calculus of variation and the partial differential equation. It arises in broad applications, such as the study of fluid filtration in porous media, constrained heating, elasto-plasticity, optimal control, and financial mathematics.% see for instance \cite{}.

The fundamental problem that appears with the study of the obstacle problem is to find the optimal regularity of minimizers. %, which has been investigated by many mathematicians, for instance, \cite{,}. 
The primary model we have in mind is the non-autonomous minimization problem
\begin{equation}\label{minfnal}
 \min_{u} \bigg\{ \int_{\Omega} F(x, \nabla u) \,dx : u \ge \psi \  \textrm{a.e. in } \Omega,  u|_{\partial \Omega} = g \bigg\}
\end{equation}
where $\Omega$ is a bounded open set in $\Rn$ and $\psi$ is the obstacle.
 If $u$ satisfies \eqref{minfnal}, we say that $u$ is a solution to the obstacle problem.  
 In this paper, we are interested in H\"older regularity properties
 of solutions to the obstacle problems related to \eqref{minfnal} with nonstandard growth conditions. 
  In fact, the regularity results for such obstacle problems have been achieved by applying results and  techniques developed in the research on the unconstrained case, i.e. when $\psi = -\infty$.

For the unconstrained case, there have been many extensive researches on the regularity theory, including $C^{0,\alpha}$- and $C^{1,\alpha}$-regularity, for the nonstandard growth problem such as the non-autonomous minimization problem \eqref{minfnal} when $F$ satisfies $(p,q)$-growth conditions, that is,
$ |z|^p \lesssim F(x, z) \lesssim |z|^q +1,  \ p < q$, 
starting with Marcellini’s seminal papers \cite{Mar89, Mar91}.
Several model functionals mainly in relation to the Lavrentiev phenomenon were proposed by Zhikov \cite{Zhi95} in order to describe the behavior of anisotropic materials in the framework of homogenization and nonlinear elasticity. The main feature of such functionals, including 
the variable exponent functionals with
$$F(x, z) \approx |z|^{p(x)} \,  \ \textrm{for } 1<\inf_{\Omega} p(x) \le \sup_{\Omega} p(x) < \infty,$$ 
and the double phase functionals with
$$F(x, z) \approx |z|^{p}+ a(x)|z|^{q} \, \ \textrm{for } 1< p \le q < \infty \text{ and } a(\cdot) \ge 0,$$ 
 is that those integrands $F(x, z)$ change their ellipticity and growth properties according to the point $x$. 
 Starting with a higher integrability result of Zhikov \cite{Zhi97}, 
regularity problems for minimizers of  the variable exponent functionals have been vigorously studied in for instance \cite{AceM01,CosM99, Ele04,Fan07} (see also references in the survey \cite{Min06}).  
For the double phase functionals, the regularity theory was developed by Baroni, Colombo and Mingione in a series of remarkable papers \cite{BarCM16,BarCM18,ColM15a}. We also refer to \cite{CupGGP18, DefO19,GiaP13,MinS99, Ok18,RagT20} for regularity results in the various variants and borderline cases. % and  the references in the survey \cite{}.  
Maximal regularity for Orlicz growth was settled by Lieberman in \cite{Lie91} and for H\"older continuity of the solution assumptions have been relaxed in \cite{ArrH18}. Other regularity results for Orlicz growth can be found for example in \cite{Cia97,CiaF99,DieSV09}.
 Furthermore, regularity properties have been recently studied for minimizers of the functionals with the generalized Orlicz growth that cover the functionals referred above, see for instance \cite{HarHK17,HarHM18,HarHLT13}. %\marginpar{A: I had hard time finding papers about minimizers other than ours. There are some about solutions though.}

Our first main result, Theorem \ref{thm:C^alpha}, concerns local $C^{0,\alpha}$-continuity of the solution for some $\alpha \in (0,1)$. This standard H\"older continuity of solution to an obstacle problem is relevant by the side of maximal regularity results since less assumptions are needed for the functional and the obstacle. We do not require differentiability of the functional and the obstacle is assumed to be merely H\"older continuous rather than having continuous gradient. These lighter results are applied for example in study of removable sets \cite{ChlF20,KilZ02}.

In the very recent paper \cite{HasO_pp18},
$C^{0,\alpha}$- and  $C^{1,\alpha}$- regularity properties were established by H\"ast\"o-Ok for minimizers of non-autonomous functionals with sharp and general conditions, covering all the previous results.  It is worth pointing out that this result assumed no gap between growth exponents $p$ and $q$. Instead their work is based on a carefully crafted continuity assumption \wvaone{} for the $x$ variable in $\phi$. This assumption turns out to capture sharp structural assumptions, which ensure regularity, in the important special cases such as double phase and variable exponent cases.
 Following their results and ideas, we are concerned with the regularity properties for solutions of the obstacle problems under $(p,q)$-growth condition. These results are collected in Theorem \ref{thm:C^1,alpha}.

 For the obstacle problem, it is noted that the solution inherits the regularity properties from the obstacle. In particular, for the linear obstacle problem, i.e., when $F(x, z) \approx |z|^2$ in \eqref{minfnal}, it is well known that the solution has the same regularity as the obstacle $\psi$, see for instance \cite{BreK73,CafK80}. However this is not usually permitted in the nonlinear cases. Hence extensive research in this direction has been done into the regularity of solutions to the nonlinear obstacle problems.  As the first result for the nonlinear functional with standard growth, i.e. when $F(x, z) \approx |z|^p$ with $1<p<\infty$, Michael-Ziemer \cite{MicZ86} proved that the solution is H\"older continuous when the obstacle $\psi$ is H\"older continuous.  
Choe \cite{Cho91} established the same result for the gradient of solutions when the gradient of the obstacle $\psi$ is H\"older continuous.
 We further refer to \cite{ChoL91,Fuc90,FucM00,MuZ91,Ok17} for the H\"older regularity results on the nonlinear obstacle problems related to $p$-Laplace type functionals and more general functionals. 
 %{\color{red}Orlicz case?} 
Concerning with the nonstandard growth cases, H\"older type regularity results for obstacle problems with $p(x)$-growth were obtained in several papers for instance \cite{EleH08,EleH11,EleHL13}. The $L^{p(\cdot)} \log L$-growth case, which can be regarded as a borderline case lying between Orlicz growth and variable exponent growth, was considered in the recent paper \cite{Ok16}. H\"older regularity results for the double phase case have been studied for example in \cite{ChlF20}.

Obstacle problems for functionals having $(p,q)$ growth have been studied in for example \cite{CasEM_pp19,Def19} with different assumptions. We do not assume any gap between exponents $p$ and $q$ or $C^2$-regularity of our functional, but instead work with the \wvaone{} condition.
Results of this paper cover all the previous results in the nonstandard growth cases mentioned above. We would like to point out that our results are new in many borderline cases including the double phase case.

 %We refer to the monographs \cite{} for a survey of the most important contributions and a detailed outline of the various regularity results including regularity up to the boundary and the regularity of the free boundary under suitable hypothesis on the data.

%\subsection{main result}

%comparison estimates for the gradient of solutions between the obstacle problem of  and an equations with solutions whose gradients are Hölder continuous. To do that, we will first obtain the higher integrability of the gradient of solutions the obstacle problem of  and then consider a perturbation argument, that finds its roots in the methods introduce

%%%%%%%%%%%%%%%%%%%%%%%%%%%%%%%%%%%%%%%%%%%%%%%%%%%%%%%

\section{Preliminaries and main results}

%\subsection*{Properties of generalized $\Phi$-functions}

%Notation
By $\Omega \subset \Rn$ we denote a bounded domain, i.e.\ an open and connected. If for an open set $\Omega'$ we have $\overline{\Omega'} \subset \Omega$, we denote it simply as $\Omega' \Subset \Omega$.
By $Q_r$ we mean a cube with side length $2r$ and by $B_r$ a ball with radius $r$. For an integrable function $g : U\subset \mathbb{R}^n \rightarrow \mathbb{R}$, we define the average of $g$ in $U$ by 
$$ \bar{g}_U : = \fint_{U} g \;dx = \frac{1}{|U|}\int_{U} g \;dx.$$
\subsection*{Generalized $\Phi$-functions}
In this section we introduce the basic notations, definitions and assumptions for our growth rate. 

A function $f$ is said to be  \textit{almost increasing} if there
exists $L \ge 1$ such that $f(s) \le L f(t)$ for all $s \le t$. 
\textit{Almost decreasing} is defined analogously.
By \textit{increasing} we mean that the inequality holds for $L=1$ 
(some call this non-decreasing), similarly for \textit{decreasing}.

\begin{defn}
We say that $\phi: \Omega\times [0,\infty) \to [0,\infty]$ is a 
\textit{ (generalized) $\Phi$-prefunction} if the following hold:
\begin{itemize}
\item[(i)]  The function $x \mapsto \phi(x,t)$ is 
measurable for every $t \in [0,\infty)$.
\item[(ii)] The function $t \mapsto \phi(x,t)$ is 
non-decreasing for every $x \in \Omega$.
\item[(iii)] $\lim\limits_{t \to 0^+} \phi(x,t) = \phi(x,0)=0$ and $
\lim\limits_{t \to\infty} \phi(x,t)= \infty$ for every $x \in \Omega$.
\end{itemize}
A $\Phi$-prefunction $\phi$ is called a \textit{(generalized weak)
$\Phi$-function}, denoted by $\phi \in \Phi_w(\Omega)$, if the function $t \mapsto \frac{\phi(x,t)}{t}$ is almost 
increasing in $(0, \infty)$ for every $x\in \Omega$,
and  \textit{a (generalized) convex
$\Phi$-function}, denoted by $\phi \in \Phi_c(\Omega)$, if 
the function $t \mapsto \phi(x,t)$ is 
left-continuous and convex for every $x\in \Omega$. 
   Additionally, we denote $\phi \in \Phi_w$ or $\phi \in \Phi_c$ if $\phi$ is independent of the space variable $x$.
\end{defn}

If $\phi \in \Phi_c(\Omega)$, we note that there exists its right-derivative $\phi' = \phi'(x,t)$, which is non-decreasing and right continuous, 
satisfying 
$$ \phi(x,t)= \int_{0}^t \phi'(x,s)\;ds.$$ This derivative is also denoted by $\phi_t$.

%\marginpar{Mi: is there no problem that all balls $B$ in the assumptions \aone{}, \wvaone{}... is changed to cubes $Q$? Actually, in \cite{HarHM18} we put the assumptions with balls but all results and their proofs did by using cubes...\\ A: If we use balls in Section 4, we can do as in previous papers}
Now let us consider $\phi \in \Phi_w(\Omega)$ and $\gamma>0$. By $\phi^{-1}(x, \cdot) : [0,\infty) \rightarrow [0,\infty]$ we denote the left-continuous inverse of $\phi$ defined by 
\begin{equation*}
    \phi^{-1}(x, t) := \inf \{\tau \geq 0 : \phi(x,\tau) \geq t \}.
\end{equation*}
We define  some conditions on $\phi$ which are related to regularity properties with respect to the $x$-variable and the $t$-variable. We say that $\phi$ satisfies 
\begin{itemize}
\item[(A0)]\label{defn:a0}
if there exists $L \geq 1 $ such that $ \frac1L \le \phi^{-1}(x,
1) \le L$ for every  $x \in \Omega$.
%{\color{red} or equivalently there exists $\beta \in(0,1]$ such that $\phi(x,\beta) \le 1\le \phi(x,1/\beta)$ for almost every $x \in \Omega$.}
\item[(A1)]\label{defn:a1}
if there exists $L \geq 1 $ such that for any ball $B_r \Subset \Omega$ with $ |B_r| <1,$
\[
\phi^{+}_{B_r}(t) \le L \phi^{-}_{B_r} (t) 
\quad\text{for all}\quad t >0  \quad\text{with}\quad
\phi^{-}_{B_r}(t) \in \bigg[1, \frac{1}{|B_r|}\bigg].
\]

\item[(wVA1)] \label{defn:wva1}
if for any $\epsilon>0$, there exists a non-decreasing continuous function $\omega=\omega_\epsilon:[0,\infty) \to [0,1]$ with $\omega(0)=0$ such that for any small ball $B_r \Subset \Omega,$
\begin{align*}
\phi^+_{B_r}(t) \leq (1+\omega(r))\phi^{-}_{B_r}(t)+\omega(r) \quad\text{for all}\quad t >0  \quad\text{with}\quad \phi^-_{B_r}(t) \in \bigg[
\omega(r), \frac{1}{|B_r|^{1-\epsilon}}\bigg].
\end{align*}

\item[(aInc)$_{\gamma}$] \label{defn:aInc} if
% there exists $L \geq 1$ such that  
$t \mapsto \frac{\phi(x,t)}{t^{\gamma}}$ is almost 
increasing %in $(0,\infty)$
with constant $L \geq 1$ uniformly in  $x\in\Omega$.
\item[(Inc)$_{\gamma}$] \label{defn:Inc} if
$t \mapsto \frac{\phi(x,t)}{t^{\gamma}}$ is non-decreasing for every \ $x\in\Omega$.
\item[(aDec)$_{\gamma}$] \label{defn:aDec}
if
% there exists $L \geq 1$ such that 
$t \mapsto \frac{\phi(x,t)}{t^{\gamma}}$ is almost 
decreasing with constant $L\ge 1$ uniformly in $x\in\Omega$.
\item[(Dec)$_{\gamma}$] \label{defn:Dec} if
$t \mapsto \frac{\phi(x,t)}{t^{\gamma}}$ is non-increasing for every \ $x\in\Omega$.
%\item[(aDec)$_q^\infty$] \label{defn:aDeci}
%if
%$t \mapsto \frac{\phi(x,t)+1}{t^{q}}$ is $L$-almost  decreasing in $(0,\infty)$ for some $L\ge 1$ and a.e.\ $x\in\Omega$.
\end{itemize} 
We would like to explain these assumptions more informally. Firstly, \azero{} condition places us in an "unweighted" space, that is, $\phi$ is not singular or degenerate with respect to the spatial variable. Secondly, conditions \aone{} and \wvaone{} are regularity conditions with respect to the spatial variable. The former one is a jump condition, where as the latter is a refined continuity condition. In many cases $\varepsilon$ could be equal to 0, and this would imply regularity for many special cases such as variable exponent and double phase. However, the weak form catches interesting borderline assumptions for example in double phase case. Last four conditions control the growth of the $\Phi$-function with respect to the $t$-variable. Often we want $\gamma$ in \ainc{\gamma} to be strictly greater than 1 to exclude $L^1$ case and finite in \adec{\gamma} to exclude $L^\infty$ case. The ''almost'' part is more flexible and is invariant under equivalent $\Phi$-functions (see the next paragraph). However it allows for local exceptions in growth rate, which \inc{\gamma} and \dec{\gamma} exclude.
 We also note that \adec{\gamma} implies doubling $\phi(x,2t) \leq c\, \phi(x,t)$.

 The notation $f_1\lesssim f_2$ means that there exists a constant
$C>0$ such that $f_1\le C f_2$. The notation $f_1\approx f_2$ means that
$f_1\lesssim f_2 \lesssim f_1$ whereas $f_1\simeq f_2$ means that 
$f_1(t/C)\le f_2(t)\le f_1(Ct)$ for some constant $C\ge 1$. Throughout this paper, we use these notations when the relevant constants $C$ depend on  $n$ and constants in our conditions such as \ainc{\gamma}, \inc{\gamma}, \adec{\gamma}, \dec{\gamma}, and \azero.

\subsection*{Generalized Orlicz space}
Let $\phi \in \Phi_w(\Omega)$ and $L^0(\Omega)$ be the set of the measurable functions on $\Omega$. The \emph{generalized Orlicz space} is defined as the 
set 
$$L^{\phi}(\Omega) : = \bigg\{ f \in L^0(\Omega) : \| f \|_{L^{\phi}(\Omega)} \leq \infty \bigg\}
$$
 with the (Luxemburg) norm
$$ \| f \|_{L^{\phi}(\Omega)} := \inf \bigg\{ \lambda >0 : \varrho_{L^{\phi}} 
\Big( \frac{ f}{\lambda}\Big) \le1\bigg\},$$
where $ \varrho_{\phi}(g)$ is the modular of $g \in L^0(\Omega)$ defined by 
\[
\varrho_{L^{\phi}}(g):=\int_{\Omega} \phi(x, |g(x)|)\,dx. 
\]

A function $f \in L^{\phi}(\Omega)$ belongs to the \emph{Orlicz--Sobolev space 
$W^{1,\phi}(\Omega)$} if its weak partial derivatives $\partial_1 f, \dots 
\partial_n f$ exist and belong to $L^{\phi}(\Omega)$ with the norm 
\[
\| f \|_{W^{1,\phi}(\Omega)} := \| f \|_{L^{\phi}(\Omega)} + \sum_{i} \| \partial_{i}f \|_{L^{\phi}(\Omega)}.
\]
Furthermore,  we denote by
$W^{1,\phi}_0 (\Omega)$ the closure of $C^{\infty}_0(\Omega)$ in 
$W^{1,\phi}(\Omega)$.

\subsection*{Main results} Let $\Omega$ be a bounded open set in $\Rn$. 
Recall that a function $u \in W^{1,\phi}_{\loc}(\Omega)$ is called
 \textit{a (local)
minimizer} of the functional
\begin{equation}\label{mainfnal}
 \int_{\Omega} \phi(x, |\nabla u|) \,dx
\end{equation}
if  for 
every open $\Omega' \Subset \Omega$ and every $v \in W^{1,\phi}(\Omega')$ 
with $u-v\in W^{1,\phi}_0(\Omega')$ we have
\[
\int_{\Omega'}\phi(x, |\nabla u|)\,dx \le  \int_{\Omega'} \phi(x,|\nabla v|)\,dx.
\] 
%\[
%\int_{\Omega'\cap 
%\{u\neq v\}}\phi(x, |\nabla u|)\,dx \le  \int_{\Omega'\cap 
%\{u\neq v\}} \phi(x,|\nabla v|)\,dx.
%\]
For a function $\psi: \Omega \rightarrow [-\infty,\infty)$ called the \textit{obstacle}, the class of admissible
functions is defined by
 \begin{align*}
\mathcal{K}^{\phi}_{\psi}(\Omega) := \{ u \in W^{1,\phi}(\Omega) \, | \, u \geq \psi \text{ a.e. in } \Omega\}. 
\end{align*} 
We define the related solution of the obstacle problem as follows.

\begin{defn}
We say that a function $u \in \mathcal{K}^{\phi}_\psi(\Omega)$ is \textit{a  (local) minimizer} of  \eqref{mainfnal} in $\mathcal{K}^{\phi}_\psi(\Omega)$  if it satisfies
\begin{align*}
\int_{\Omega'} \phi(x,|\nabla u|) \, dx \leq \int_{\Omega'} \phi(x,|\nabla w|) \, dx,
\end{align*}
where $w \in \mathcal{K}^{\phi}_{\psi}(\Omega)$ %$u,w \in \mathcal{K}^{\phi}_{\psi}(\Omega) := \{ v \in W^{1,\phi}(\Omega) \, | \, v \geq \psi \text{ a.e. in } \Omega\}$ 
and $\Omega' \Subset \Omega$.
If $\phi \in \Phi_w(\Omega) \cap C^1([0,\infty))$, we say that a function $u \in \mathcal{K}^{\phi}_{\psi}(\Omega)$ is a solution to the $\mathcal{K}^{\phi}_{\psi}(\Omega)$-obstacle problem 
 if it satisfies 
 $$ \int_{\Omega}\left(  \frac{ \partial_t\phi(x,|\nabla u|)}{|\nabla u|} \nabla u \right) \cdot \nabla(\eta - u) \,dx \geq 0$$
 for all $\eta \in  \mathcal{K}^{\phi}_{\psi} (\Omega)$ with $\mathrm{supp}(\eta-u) \subset \Omega,$ which is equivalent to 
 \begin{equation}\label{weakformobstacle}
  \int_{\Omega}\left(  \frac{ \partial_t\phi(x,|\nabla u|)}{|\nabla u|} \nabla u \right) \cdot \nabla\eta  \,dx \geq 0
  \end{equation}
 for all $\eta \in W^{1,\phi}(\Omega)$ with a compact support and $\eta \geq \psi - u $ a.e. in $\Omega.$
\end{defn}

\begin{rem}
 We note that a solution is also a minimizer and the reverse implication requires differentiability for the $\Phi$-function. %(cf. \cite[Theorem 7.6]{HarHK16}). 
More precisely, in the case where $\phi(x,t)$ is differentiable with respect to $t$ and satisfies \adec{}, then solutions and minimizers  are equivalent, see \cite[Theorem 7.6]{HarHK16}.
\end{rem}

 For simplicity,  let us define $\partial \phi = \partial \phi(x,t) : \Omega \times \Rn \rightarrow \Rn $ by 
 $$ \partial \phi(x, t) %= \partial_{\xi} (\phi(x, |\xi|) ) 
 = \frac{\partial_t \phi(x,|t|)} {|t|} t .$$
%Recall that a function $u \in W^{1,\phi}_{\loc}(\Omega)$ is called
% \textit{a (local)
%minimizer} of  \eqref{mainfnal} if  for 
%every open $\Omega' \Subset \Omega$ and every $v \in W^{1,\phi}(\Omega')$ 
%with $u-v\in W^{1,\phi}_0(\Omega')$ we have
%\[
%\int_{\Omega'}\phi(x, |\nabla u|)\,dx \le  \int_{\Omega'} \phi(x,|\nabla v|)\,dx.
%\]
%%\[
%%\int_{\Omega'\cap 
%%\{u\neq v\}}\phi(x, |\nabla u|)\,dx \le  \int_{\Omega'\cap 
%%\{u\neq v\}} \phi(x,|\nabla v|)\,dx.
%%\]
We note that for $g \in W^{1,\phi}(\Omega)$ with $ g \geq \psi$ on $\partial \Omega$, the minimizer of 
the functional 
$$ u \in \{ w \in \mathcal{K}^{\phi}_{\psi}(\Omega) :  w = g \text{ on } \partial \Omega\} \mapsto  \int_{\Omega} \phi (x, |\nabla u|)\,dx $$
is the solution to the obstacle problem of $\mathcal{K}^{\phi}_{\psi}(\Omega)$ with $u = g $ on $\partial \Omega.$

Our first result concerns the local H\"older continuity of the solution. This result does not assume differentiability of our $\Phi$-function or \wvaone{}. Instead, \aone{} condition is enough combined with local H\"older continuity of the obstacle function. 
\begin{thm}%[Local H\"older continuity of the solution]
\label{thm:C^alpha} 
Let $\Omega \subset \mathbb{R}^n$ be a bounded domain and 
 $\phi\in\Phi_w(\Omega)$ 
satisfy \ainc{}, \adec{},  \azero{}, and \aone{}.  %Let $u$ be a {\color{red} solution} to the $\mathcal{K}^{\phi}_{\psi}(\Omega)$-obstacle problem. 
Let $u$ be a minimizer of \eqref{mainfnal} in $\mathcal{K}^{\phi}_{\psi}(\Omega).$
Suppose that the obstacle $\psi \in C_{\loc}^{0,\beta}(\Omega)$ for some $\beta\in(0,1).$ Then 
$u \in C_{\loc}^{0,\alpha}(\Omega)$ for some $\alpha \in (0,1).$
\end{thm}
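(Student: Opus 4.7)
I would prove Theorem \ref{thm:C^alpha} by the classical De Giorgi oscillation-decay method, adapted to the obstacle via a careful choice of admissible comparison functions. Fix a ball $B_{2r} \Subset \Omega$, a cutoff $\eta \in C_c^1(B_r)$ with $0 \le \eta \le 1$, and a level $k \in \R$. Two families of competitors respect the constraint $w \ge \psi$. The first,
\[
w_-(x) \,:=\, u(x) + \eta(x)\bigl(k - u(x)\bigr)_+, \qquad \text{valid for any } k \in \R,
\]
satisfies $w_- \ge u \ge \psi$ pointwise. The second,
\[
w_+(x) \,:=\, u(x) - \eta(x)\bigl(u(x) - k\bigr)_+, \qquad k \ge \sup_{B_r}\psi,
\]
satisfies $w_+ \ge \min\{u,k\} \ge \psi$, since $k \ge \psi$ throughout $B_r$ and $u \ge \psi$ everywhere. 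Testing the minimality $\int \phi(x,|\nabla u|)\,dx \le \int \phi(x,|\nabla w_\pm|)\,dx$ and exploiting \ainc{} and \adec{} would yield one-sided Caccioppoli inequalities of the form
\[
\int_{B_r} \eta^{q}\, \phi\bigl(x,\, |\nabla (u-k)_\pm|\bigr)\, dx \;\lesssim\; \int_{B_r} \phi\bigl(x,\, |\nabla \eta|\,(u-k)_\pm\bigr)\, dx,
\]
with $q$ the upper growth exponent in \adec{q}, the $(+)$ version valid only for $k \ge \sup_{B_r}\psi$ and the $(-)$ version valid for every $k$.

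Combining these Caccioppoli estimates with \azero{}, \aone{}, and a Sobolev--Poincar\'e inequality in $L^\phi$ on small balls, I would run the standard De Giorgi iteration in each regime. This should produce, for each $\gamma \in (0,1)$, some $\delta = \delta(\gamma) > 0$ such that if $k \ge \sup_{B_r}\psi$ and $|\{u > k\} \cap B_r| \le \delta |B_r|$ then $\esssup_{B_{r/2}} u \le k + \gamma\,\esssup_{B_r}(u-k)_+$, and symmetrically for $(k-u)_+$ with no restriction on $k$. In particular $u$ is locally bounded, so $M(r) := \esssup_{B_r} u$, $m(r) := \essinf_{B_r} u$ and $\omega(r) := M(r) - m(r)$ are finite.

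The decisive step is an oscillation dichotomy at the midlevel $k^\ast := \tfrac12(M(r)+m(r))$. In Case~A one has $M(r) - \sup_{B_r}\psi \le \tfrac12 \omega(r)$; combining with $m(r) \ge \essinf_{B_r}\psi$ (immediate from $u \ge \psi$) gives
\[
\omega(r) \;\le\; \bigl(\sup_{B_r}\psi + \tfrac12 \omega(r)\bigr) - \essinf_{B_r}\psi \;=\; \osc_{B_r}\psi + \tfrac12 \omega(r),
\]
so that $\omega(r) \le 2\,\osc_{B_r}\psi \le C r^\beta$ by the H\"older continuity of $\psi$. In Case~B one has $k^\ast > \sup_{B_r}\psi$, so the obstacle-restricted upper Caccioppoli and hence the upper De Giorgi lemma are available on the whole band $[k^\ast, M(r)]$; combined with the unrestricted lower lemma and the trivial fact that one of $\{u \ge k^\ast\}$ or $\{u \le k^\ast\}$ has measure at least $|B_r|/2$, this produces $\omega(r/2) \le \theta\,\omega(r)$ for some $\theta \in (0,1)$ depending only on structural data. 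Together the two cases give $\omega(r/2) \le \max\bigl\{\theta\,\omega(r),\, C r^\beta\bigr\}$, which iterates in the standard way to $\omega(r) \le C r^\alpha$ for some $\alpha \in (0,1)$, as required.

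The main technical obstacle is the Caccioppoli-to-De-Giorgi step in the generalized Orlicz setting under only the \aone{} jump condition (in contrast to the stronger \wvaone{} that will be needed for $C^{1,\alpha}$). One must work on balls small enough that the relevant truncation levels $t$ satisfy $\phi^-_{B_r}(t) \in [1, 1/|B_r|]$; on such balls \aone{} permits replacing $\phi(x,\cdot)$ by the spatially constant $\phi^-_{B_r}(\cdot)$ up to a fixed multiplicative constant, so that a classical Sobolev--Poincar\'e inequality for the constant $\Phi$-function closes the iteration. The \azero{} and \adec{} conditions are what ensure that an appropriate rescaling of $u$ keeps the active levels inside this normalized range at every scale.
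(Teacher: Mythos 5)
Your proposal is correct in outline but follows a genuinely different route from the paper. The paper does not run a De~Giorgi oscillation dichotomy directly; instead it packages the iteration machinery from \cite{HarHM18} into two black-box estimates --- a supremum bound for $(u-\ell)_+$ valid at levels $\ell \geq \sup_{Q_{2r}}\psi$ (Proposition~\ref{prop:sup-estimate}, proved by verifying the Caccioppoli inequality \eqref{eq:caccioppoli} for the obstacle problem), and a weak Harnack (infimum) inequality for nonnegative supersolutions (Proposition~\ref{prop:inf-estimate}) --- and then splits into cases according to whether $\underline{u}(r) \geq \overline{\psi}(r)$ or not. In the second case the truncation level is shifted from $\underline{u}(r)$ to $\overline{\psi}(r)$ and the resulting error is absorbed using the H\"older modulus of $\psi$ and a careful treatment of the integral-average term; the two cases then feed into the weak Harnack inequality applied to $u - \underline{u}(r)$, giving $\overline{u}(r/4) - \underline{u}(r) \leq C(\underline{u}(r/4) - \underline{u}(r)) + c r^\beta$, from which the oscillation decay follows by elementary algebra. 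Your argument instead sets up the dichotomy at the midlevel $k^\ast = \tfrac12(M(r)+m(r))$: when $\sup_{B_r}\psi$ is within $\tfrac12\omega(r)$ of $M(r)$ the oscillation is controlled outright by $\osc_{B_r}\psi$, and otherwise $k^\ast > \sup\psi$ so the one-sided (obstacle-restricted) upper Caccioppoli is available on the whole active band and the two-sided De~Giorgi alternative closes. Both decompositions work; the paper's buys brevity by reusing the sup/inf estimates of \cite{HarHM18} essentially verbatim, while yours is more self-contained and avoids invoking the weak Harnack inequality. Two points where your sketch is thinner than the actual difficulty: (i) the generalized Orlicz De~Giorgi lemmas carry additive $+r$ errors coming from the normalization $\varrho_{L^\phi}(\nabla u)\leq 1$ and the restricted range in \aone{}, so the dichotomy should really read $\omega(r/2) \leq \max\{\theta\,\omega(r),\, C r^\beta\} + Cr$ (harmless, since $r\leq r^\beta$, but it must be tracked); and (ii) the ``rescaling of $u$'' you invoke has no literal analogue here --- what replaces it is the modular smallness $\varrho_{L^\phi(Q_{2r})}(\nabla u)\leq 1$ on small enough cubes, together with the $[1,1/|B_r|]$ window in \aone{} and the extra constant $+1$ in the Sobolev--Poincar\'e inequality \eqref{SPineq}.
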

The proof is based on constructing classical Harnack's inequality. First, a supremum estimate of the solution is proved via a use of Caccioppoli type energy estimate and results in \cite{HarHM18}. Compared to previous work, we need to take care of an integral average term to match the supremum estimate with the corresponding infimum estimate. 

The following second theorem studies maximal regularity of the solution. Here we need to strengthen our assumptions to include $C^1$-regularity of $\phi$ and the obstacle $\psi$ and replace the assumption \aone{} with \wvaone{}. $C^{1,\alpha}$-regularity requires also a standard decay estimate for modulus of continuity.

\begin{thm}\label{thm:C^1,alpha}
Let $\Omega \subset \mathbb{R}^n$ be a bounded domain, $\phi\in\Phi_w(\Omega)$
 and  $\phi(x, \cdot) \in C^{1}([0,\infty))$ for any $x \in \Omega$ with $\partial_t \phi $ satisfying    \azero{}, \inc{p-1}, \dec{q-1} for some $1<p\leq q$. Let 
 $u \in \mathcal{K}^{\phi}_{\psi}(\Omega)$ be a solution to the $\mathcal{K}^{\phi}_{\psi} (\Omega)$-obstacle problem and suppose $\psi \in C^{1,\beta}_{\loc}(\Omega)$ for some $\beta \in (0,1)$.

\begin{itemize}
\item[(i)]  If $\phi$ satisfies  \wvaone{},  then  $u \in C_{\loc}^{0,\alpha}(\Omega)$ for any $\alpha \in (0,1).$
\item[(ii)]  If $\phi$ satisfies \wvaone{} with
 $$\omega(r) \lesssim r^{\delta} \text{ for all $r \in (0,1]$  and for some }  \delta >0,$$
 then $u \in C_{\loc}^{1,\alpha}(\Omega) $ for some  $\alpha \in (0,1).$
\end{itemize}
\end{thm}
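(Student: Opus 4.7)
The strategy is to compare $u$ on each small ball $B_r\Subset\Omega$ with the unconstrained minimizer $v$ of $\int_{B_r}\phi(x,|\nabla\cdot|)\,dx$ carrying the same boundary data, and then transfer the regularity of $v$, available from H\"ast\"o-Ok \cite{HasO_pp18}, to $u$, with the error controlled by $\psi\in C^{1,\beta}_{\loc}(\Omega)$.

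Let $v\in W^{1,\phi}(B_r)$ be the free minimizer with $v=u$ on $\partial B_r$. Since $u\ge\psi$ on $\partial B_r$, the function $\max(v,\psi)$ lies in $\mathcal{K}^\phi_\psi(B_r)$ and coincides with $u$ on $\partial B_r$, so the obstacle-minimality of $u$ gives
\[
\int_{B_r}\phi(x,|\nabla u|)\,dx - \int_{B_r}\phi(x,|\nabla v|)\,dx \le \int_{B_r\cap\{v<\psi\}}\phi(x,|\nabla\psi|)\,dx.
\]
Combining with the uniform convexity of $\phi$ encoded in the standard $V$-function ($|V(x,\xi)|^2\approx\phi(x,|\xi|)$, available under \inc{p-1} and \dec{q-1}) together with the minimality of $v$, I would derive the comparison estimate
\[
\int_{B_r}|V(x,\nabla u)-V(x,\nabla v)|^2\,dx \lesssim \int_{B_r\cap\{v<\psi\}}\phi(x,|\nabla\psi|)\,dx.
\]
The right-hand side is bounded by $C|B_r|$ since $\nabla\psi\in L^\infty_{\loc}$; the local boundedness of $u$ needed to localize the growth constants of $\phi$ follows from Theorem \ref{thm:C^alpha}, whose hypothesis \aone{} is implied by \wvaone{} restricted to bounded arguments.

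For part (i), \cite{HasO_pp18} gives $v\in C^{0,\alpha}(B_{r/2})$ for every $\alpha\in(0,1)$ with scale-invariant bounds. Combined with a Sobolev--Morrey bound on $u-v\in W^{1,\phi}_0(B_r)$ coming from the $V$-comparison, a standard oscillation iteration transfers the estimate to $u$, yielding $u\in C^{0,\alpha}_{\loc}(\Omega)$ for every $\alpha<1$. For part (ii), under $\omega(r)\lesssim r^\delta$ the free minimizer satisfies a Campanato-type excess decay for $V(x,\nabla v)$ by \cite{HasO_pp18}; inserting the $V$-comparison (with an obstacle defect of order $r^{q\beta}$ since $\nabla u=\nabla\psi$ a.e. on the coincidence set and $\nabla\psi\in C^{0,\beta}_{\loc}$), and iterating in the Campanato space $\mathcal{L}^{2,n+2\alpha}_{\loc}$, produces $\nabla u\in C^{0,\alpha}_{\loc}(\Omega)$ for some $\alpha\in(0,1)$.

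The main obstacle is the $V$-comparison itself. On $\{v<\psi\}$ the free minimizer $v$ has no a priori bound, and only the pointwise chain $u\ge\psi>v$ on that set lets one dominate the gradient error by $\phi(x,|\nabla\psi|)$ alone. Converting the resulting one-sided energy inequality into the quadratic $V$-estimate under merely \wvaone{} (so that $\phi^+_{B_r}$ and $\phi^-_{B_r}$ cannot be freely interchanged) is exactly what forces the use of the $(p,q)$-structure of $\partial_t\phi$, and matching the obstacle defect exponent $q\beta$ against the Campanato decay exponent of $v$ is what degrades ``every $\alpha$'' in (i) to ``some $\alpha$'' in (ii).
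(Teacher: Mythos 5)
Your plan diverges substantially from the paper's strategy, and for part (ii) it contains a genuine gap. The paper does \emph{not} compare $u$ directly with the free $\phi$-minimizer. Instead it builds a two-step chain through a regularized \emph{autonomous} function $\tilde\phi$: first a comparison function $w$ solving
$-\div\!\big(\partial\tilde\phi(\nabla w)\big)=-\div\!\big(\partial\tilde\phi(\nabla\psi)\big)$
in $B_r$ with $w=u$ on $\partial B_r$, and then $v$ solving the homogeneous $\tilde\phi$-Laplace equation with $v=w$ on $\partial B_r$. The point of introducing $w$ is twofold: (a) by the comparison principle (Lemma \ref{lem:psi<w}) one gets $w\ge\psi$, so $w-u$ is a legitimate variation for the obstacle inequality, and (b) the obstacle enters the comparison as the \emph{oscillation} of the vector field $\partial\tilde\phi(\nabla\psi)$, because one may subtract the divergence-free constant $\partial\tilde\phi(\nabla\psi(x_0))$ from the source. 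That is what converts the $C^{1,\beta}$-information on $\psi$ into a decaying error of order $r^\beta$ in the term $I_3$ of Lemma \ref{lem:comparison-uw}.

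Your one-step comparison, taking $\max(v,\psi)$ as competitor, produces the inequality
\[
\int_{B_r}\big|V(x,\nabla u)-V(x,\nabla v)\big|^2\,dx
\;\lesssim\;
\int_{B_r\cap\{v<\psi\}}\phi(x,|\nabla\psi|)\,dx,
\]
and the right-hand side is bounded only by $C|B_r|\approx r^n$, with \emph{no} extra decay coming from the H\"older continuity of $\nabla\psi$: the integrand involves $\nabla\psi$ itself, not its oscillation. This order $r^n$ error is adequate for the Morrey estimate in part (i), so your argument for $C^{0,\alpha}_{\loc}$, $\alpha<1$, is plausible. But for part (ii) a Campanato iteration for $V(x,\nabla u)$ needs an error of order $r^{n+\theta}$ for some $\theta>0$, and your estimate does not supply it. The claim of an ``obstacle defect of order $r^{q\beta}$'' does not follow from your comparison: $\nabla u=\nabla\psi$ a.e.\ on the coincidence set $\{u=\psi\}$, but your defect integral ranges over $\{v<\psi\}$, which is a different set and bears no a priori relation to the coincidence set or to the oscillation of $\nabla\psi$. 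Subtracting a tangent plane from $\psi$ to gain decay does not work either, since replacing $u,\psi$ by $u-\ell,\psi-\ell$ with $\ell$ affine changes the integrand to $\phi(x,|\nabla\cdot+\nabla\ell|)$ and destroys the obstacle-minimizer structure. This is precisely the problem the auxiliary $w$ is designed to solve, and without it your part (ii) argument does not close.

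Two smaller points. First, you propose using the regularity of $v$ (a minimizer of the non-autonomous $\phi$-energy) from \cite{HasO_pp18} as a black box; but those decay estimates are themselves obtained through comparison with $\tilde\phi$, so to get scale-invariant constants you would effectively have to unwind the same construction. The paper's choice to compare to $\tilde\phi$-solutions directly, and to prove the obstacle-adapted Calder\'on--Zygmund estimate \eqref{eq:CZ-modular} for $w$, is what makes the bookkeeping tractable. Second, the exponent you write, $r^{q\beta}$, is not the natural one even under your own heuristic: the oscillation of $\nabla\psi$ is $r^\beta$, and passing through $\phi$ with \inc{p} and \dec{q} typically yields exponents between $p\beta$ and $q\beta$, while the paper's final defect in Lemma \ref{lem:comparison-uw} is the $(1/2q)$-th power of $r^\beta$.
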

For higher regularity, the modulus of continuity with respect to $x$ is expected to have vanishing property already in the case of Laplace equation. In addition, continuity of the gradient also requires power-type decay estimate for classical equations, see  \cite{GiaG_83}.

For proving results in Theorem~\ref{thm:C^1,alpha}, we first obtain 
the higher integrability of the gradient of solutions to the obstacle problem which implies the reverse H\"older type inequality. Taking into account the regularized Orlicz function $\tilde{\phi}$, which was constructed by \cite{HasO_pp18}, we derive comparison estimates for the gradients of solutions to the $\mathcal{K}^{\tilde \phi}_{\psi}(\Omega)$-obstacle problem and to $\tilde \phi$-Laplacian equations. The proofs conclude with classical iteration arguments.

\subsection*{Inequalities for generalized Orlicz functions}

 Below introduce some pointwise inequalities for generalized Orlicz functions. 
In generalized Orlicz spaces, the classical \holder conjugate exponent is replaced by a conjugate $\Phi$-function
\begin{align}
\label{defn:conjugate}
\phi^\ast (x,t) := \sup_{s\geq 0} \big(ts - \phi(x,s) \big).
\end{align} 
From the definition it is immediate that the generalized Young's inequality 
\begin{align}
\label{eq:gen-young}
st \leq \phi(x,s) + \phi^\ast(x,t)
\end{align}
holds for all $s,t \geq 0$.

The following lemmas deal with derivatives of $\Phi$-functions and their proofs can be found in \cite[Proposition 3.8]{HasO_pp18}. 
\begin{lem}
\label{est:Orlicz}
Let $\phi \in \Phi_c \cap C^{1}([0,\infty))$ with $\phi'$ satisfying \inc{p-1} and \dec{q-1} for some $1<p\leq q$. Then for $ \kappa \in (0,\infty)$ and $x,y \in \R^n,$ the following are satisfied:
\begin{enumerate}
\item{$t \phi'(t) \approx \phi(t)$ and $\phi$ satisfies \inc{p} and \dec{q};}
\medskip
\item{$\dfrac{\phi'(|x|+|y|)}{|x|+|y|}|x-y|^2 \approx \left ( \dfrac{\phi'(|x|)}{|x|}x - \dfrac{\phi'(|y|)}{|y|}y \right ) \cdot (x-y)$;}
\medskip
\item{$\dfrac{\phi'(|x|+|y|)}{|x|+|y|}|x-y|^2 \lesssim \phi(|x|) - \phi(|y|) - \dfrac{\phi'(|y|)}{|y|} y \cdot (x-y)$;}
\medskip
\item{$\phi(|x-y|) \lesssim \kappa \left[ \phi(|x|) + \phi(|y|) \right]+ \kappa^{-1} \dfrac{\phi'(|x|+|y|)}{|x|+|y|}|x-y|^2$.}
\end{enumerate}
Moreover, if $\phi \in C^2((0,\infty))$, then $t \phi''(t) \approx \phi'(t)$ and $\frac{\phi'(|x|+|y|)}{|x|+|y|}$ can be replaced by $\phi''(|x|+|y|)$ in (2)--(4).
\end{lem}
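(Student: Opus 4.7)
The plan is to prove (1) first and use it as the backbone for (2)--(4), handling the $C^2$ case directly and reducing the $C^1$ case to it via mollification. For (1), the bound $t\phi'(t)\approx\phi(t)$ follows from the identity $\phi(t)=\int_0^t\phi'(s)\,ds$ combined with the hypotheses: since $s\mapsto\phi'(s)/s^{p-1}$ is non-decreasing, one gets $\phi(t)\le t\phi'(t)/p$, and since $s\mapsto\phi'(s)/s^{q-1}$ is non-increasing, $\phi(t)\ge t\phi'(t)/q$. The properties $\phi\in\inc{p}$ and $\phi\in\dec{q}$ then come from differentiating $t^{-p}\phi(t)$ and $t^{-q}\phi(t)$, whose signs are determined by $t\phi'(t)-p\phi(t)$ and $t\phi'(t)-q\phi(t)$, already controlled by the above.

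For (2)--(4) I would first work under the additional hypothesis $\phi\in C^2((0,\infty))$, where the vector field $A(z):=(\phi'(|z|)/|z|)z$ is $C^1$ with Jacobian
\[
D_jA_i(z)=\frac{\phi'(|z|)}{|z|}\delta_{ij}+\Bigl(\phi''(|z|)-\frac{\phi'(|z|)}{|z|}\Bigr)\frac{z_iz_j}{|z|^2},
\]
whose eigenvalues are $\phi'(|z|)/|z|$ (with multiplicity $n-1$) and $\phi''(|z|)$; note that, by the same integration argument as in (1) applied to $\phi'$ itself, $t\phi''(t)\approx\phi'(t)$, so $DA(z)$ is positive definite. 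The line integral identity $(A(x)-A(y))\cdot(x-y)=\int_0^1 DA(\xi_s)(x-y)\cdot(x-y)\,ds$ with $\xi_s:=y+s(x-y)$ then reduces (2) to showing that $\phi'(|\xi_s|)/|\xi_s|\approx\phi'(|x|+|y|)/(|x|+|y|)$ on a subinterval of $[0,1]$ of positive measure, which follows from \inc{p-1} and \dec{q-1}. Part (3) is obtained from the companion identity
\[
\phi(|x|)-\phi(|y|)-\phi'(|y|)|y|^{-1}y\cdot(x-y)=\int_0^1(A(\xi_s)-A(y))\cdot(x-y)\,ds
\]
together with (2), while (4) comes from Young's inequality $ab\le\kappa a^2+(4\kappa)^{-1}b^2$ applied after splitting according to whether $|x-y|\lesssim|x|+|y|$ or not, together with the doubling of $\phi$ coming from (1).

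To remove the $C^2$ hypothesis, I would mollify: set $\phi'_\epsilon(t):=(\phi'*\rho_\epsilon)(t)$ on $(0,\infty)$ and $\phi_\epsilon(t):=\int_0^t\phi'_\epsilon(s)\,ds$. A routine check shows that \inc{p-1} and \dec{q-1} are inherited by $\phi'_\epsilon$ with the same constants, so (2)--(4) hold for the $C^2$ function $\phi_\epsilon$ with constants independent of $\epsilon$; sending $\epsilon\to0$ yields the general $C^1$ case. The main obstacle, and the step that takes the most care, is the sharp lower bound in (2): producing an interval of $s\in[0,1]$ on which $|\xi_s|$ is comparable to $|x|+|y|$ requires a case distinction between $|x-y|\lesssim|x|+|y|$ and $|x-y|\gtrsim|x|+|y|$, and in each case one must match the integration subinterval with the region where the positive-definite matrix $DA(\xi_s)$ delivers a lower bound on $|x-y|^2$ of the required form.
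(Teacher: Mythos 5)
This lemma is not proved in the paper; the authors cite \cite[Proposition 3.8]{HasO_pp18} for it, so there is no in-paper argument to compare against. Your outline follows the standard route for such monotone-vector-field estimates (line-integral identity, eigenvalue analysis of $DA$, mollification to remove $C^2$), and part (1) is argued correctly: the two-sided bound $\tfrac{1}{q}t\phi'(t)\le\phi(t)\le\tfrac{1}{p}t\phi'(t)$ from integrating \inc{p-1}/\dec{q-1}, and \inc{p}/\dec{q} by differentiating $t^{-p}\phi$ and $t^{-q}\phi$. (One small slip: $t\phi''(t)\approx\phi'(t)$ in the $C^2$ case comes from \emph{differentiating} $\phi'(t)/t^{p-1}$ and $\phi'(t)/t^{q-1}$, not from an integration of $\phi''$, which would need hypotheses you don't have.)

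Two points in the passage from $C^2$ to $C^1$ and in the proof of (2) need more care. First, the mollification step as written is not ``routine'': for an additive two-sided kernel, $\phi'_\epsilon(t)/t^{p-1}=\int \tfrac{\phi'(t-\tau)}{(t-\tau)^{p-1}}\bigl(\tfrac{t-\tau}{t}\bigr)^{p-1}\rho_\epsilon(\tau)\,d\tau$, and the factor $\bigl(1-\tau/t\bigr)^{p-1}$ is increasing in $t$ only for $\tau\ge 0$, so \inc{p-1} (and similarly \dec{q-1}) is preserved only for a one-sided mollifier. A multiplicative mollification $\phi'_\epsilon(t):=\int\phi'(t\sigma)\rho_\epsilon(\sigma)\,d\sigma$ preserves both conditions unconditionally since $\phi'(t\sigma)/(t\sigma)^{p-1}$ is monotone in $t$ for each $\sigma$; this is exactly the device the paper uses to build $\tilde\phi$ in \eqref{defn:tilde-phi}, and you should use it here too. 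Second, in (2) the ``subinterval of positive measure'' argument delivers only the \emph{lower} bound; the \emph{upper} bound requires controlling the full integral $\int_0^1 \phi'(|\xi_s|)/|\xi_s|\,ds$, which for $p<2$ has a weak singularity $|\xi_s|^{p-2}$ near the point of the chord closest to the origin. That is where the split $|x-y|\le \tfrac12(|x|+|y|)$ versus $|x-y|>\tfrac12(|x|+|y|)$ is actually forced (in the first case $|\xi_s|\gtrsim|x|+|y|$ on all of $[0,1]$, in the second one bounds $\int_0^1|\xi_s|^{p-2}\,ds\lesssim|x-y|^{p-2}\approx(|x|+|y|)^{p-2}$). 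Your lower bound, by contrast, can be obtained without any case distinction by taking $s\in[\tfrac34,1]$ and assuming WLOG $|x|\ge|y|$, which gives $|\xi_s|\in[\tfrac14(|x|+|y|),|x|+|y|]$ uniformly; so the case split belongs to the upper bound, not where you placed it.
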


\begin{lem}[Propositions 3.5 \& 3.6 \cite{HasO_pp18}]
\label{lem:phi-estimates}
Let $\phi$ be a $\Phi$-prefunction.
\begin{enumerate}
\item{If $\phi$ satisfies \ainc{1}, then there exists $\eta \in \Phi_c(\Omega)$ such that $\phi \simeq \eta$}.
\medskip
\item{If $\phi$ satisfies \adec{1}, then there exists $\eta \in \Phi_c(\Omega)$ such that $\phi \approx \eta^{-1}$. Note that $\eta^{-1}(x, \cdot)$ is concave.}
\medskip
%\item{Let $p,q \in (1, \infty)$. Then $\phi$ satisfies (aInc)$_p$ or (aDec)$_q$ if and only if $\phi^\ast$ satisfies (aDec)$_{\frac{p}{p-1}}$ or (aInc)$_{\frac{1}{q-1}}$, respectively.}
\item{Let $p,q \in (1, \infty)$. Then $\phi$ satisfies 
\ainc{p} or \adec{q} if and only if $\phi^\ast$ satisfies \adec{\frac{p}{p-1}} or \ainc{\frac{q}{q-1}}, respectively.}

\medskip
\item{If $\phi$ satisfies \ainc p and \adec q,  then for any $s,t \geq 0$ and $\kappa \in (0,1)$,
\begin{align*}
ts \leq \phi(x, \kappa^{1/p}t) + \phi^\ast(x, \kappa^{-1/p}s) \lesssim \kappa \phi(x,t) + \kappa^{\frac{-1}{p-1}}\phi^\ast(x,s)
\end{align*} and 
\begin{align*}
ts \leq \phi(x, \kappa^{1/q'}t) + \phi^\ast(x, \kappa^{-1/q'}s) \lesssim  \kappa^{-(q-1)}\phi(x,t) +\kappa \phi^\ast(x,s).
\end{align*} 
}

\item{If  $\phi\in \Phi_c(\Omega)$ and $\phi'$ satisfies  \adec{q} then 
\begin{align*}
\phi^\ast \left (x, \dfrac{\phi(x,t)}{t}\right ) \leq \phi^\ast (x, \phi'(x,t)) \leq t \phi'(t) \approx \phi(t).
\end{align*}
}
\end{enumerate}
\end{lem}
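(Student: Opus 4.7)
The lemma collects five standard properties of generalized $\Phi$-functions: two convex/concave regularisations in (1) and (2), a conjugation duality in (3) between \ainc{p} and \adec{p/(p-1)}, and two parametrised Young-type inequalities in (4) and (5). The plan is to prove (1) and (2) by constructing explicit monotone envelopes, (3) by a scaling computation in the definition of $\phi^\ast$, and (4)--(5) as direct consequences of Young's inequality together with (3).

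For (1), I would set
$$
g(x,s) := \sup_{0 < r \le s} \frac{\phi(x,r)}{r}, \qquad \eta(x,t) := \int_0^t g(x,s)\,ds.
$$
By construction $g(x,\cdot)$ is non-decreasing and left-continuous, so $\eta(x,\cdot)$ is convex, left-continuous, vanishes at $0$, and tends to $\infty$; hence $\eta \in \Phi_c(\Omega)$. The \ainc{1} hypothesis gives $\phi(x,s)/s \le g(x,s) \le L\,\phi(x,s)/s$; splitting $\int_0^t g \ge \int_{t/2}^t g \ge (t/2)\,g(x,t/2) \ge \phi(x,t/2)$ and using $\phi(x,s)/s \le L\,\phi(x,t)/t$ for $s \le t$ to estimate from above yields $\phi(x,t/2) \le \eta(x,t) \le L^2 \phi(x,t)$, which translates to $\phi \simeq \eta$. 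For (2), \adec{1} means $\phi(x,t)/t$ is almost decreasing, so $\tau \mapsto \phi^{-1}(x,\tau)/\tau$ is almost increasing; applying (1) to $\phi^{-1}$ produces $\eta \in \Phi_c(\Omega)$ with $\phi^{-1} \simeq \eta$, and inverting this shifted-argument equivalence (using monotonicity) gives $\phi \approx \eta^{-1}$ with $\eta^{-1}(x,\cdot)$ concave as the inverse of a convex increasing function vanishing at $0$.

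For (3), I would work directly from the definition $\phi^\ast(x,t) = \sup_{s \ge 0}(ts - \phi(x,s))$. Given \ainc{p} with constant $L$, the substitution $s = \lambda^{1/(p-1)}\sigma$ for $\lambda \ge 1$, combined with the \ainc{p} bound $\phi(x,\lambda^{1/(p-1)}\sigma) \ge L^{-1}\lambda^{p/(p-1)}\phi(x,\sigma)$, extracts the factor $\lambda^{p/(p-1)}$ from the supremum and yields $\phi^\ast(x,\lambda t) \lesssim \lambda^{p/(p-1)}\phi^\ast(x,t)$, i.e.\ \adec{p/(p-1)} of $\phi^\ast$. The direction \adec{q}$\Rightarrow$\ainc{q/(q-1)} is symmetric, and the reverse implications follow from the biconjugate relation $\phi^{\ast\ast} \simeq \phi$, which is available via (1) once $\phi$ is replaced by its convex representative. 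For (4), apply the generalised Young inequality \eqref{eq:gen-young} to a rescaled pair $(\kappa^{a}t, \kappa^{-a}s)$, with $a = 1/p$ for the first estimate and $a = 1/q'$ for the second; the monotonicity bounds from \ainc{p}/\adec{q} on $\phi$ together with the dual \adec{p/(p-1)}/\ainc{q/(q-1)} on $\phi^\ast$ supplied by (3) then pull the correct powers of $\kappa$ out of $\phi(x,\kappa^{a}t)$ and $\phi^\ast(x,\kappa^{-a}s)$, matching the stated right-hand sides. For (5), Lemma \ref{est:Orlicz}(1) already gives $t\phi'(x,t) \approx \phi(x,t)$; the Young equality $\phi^\ast(x,\phi'(x,t)) = t\phi'(x,t) - \phi(x,t)$, attained at $s = \phi'(x,t)$ by convexity, immediately yields $\phi^\ast(x,\phi'(x,t)) \le t\phi'(x,t)$, while monotonicity of $\phi^\ast(x,\cdot)$ combined with $\phi(x,t)/t \le \phi'(x,t)$ (from $\phi(x,t) = \int_0^t \phi'(x,s)\,ds$ and $\phi'$ non-decreasing) delivers the leftmost inequality.

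The main technical care lies in (1) and (2): the monotone-envelope construction has to be carried out with constants uniform in $x \in \Omega$, and the passage from the multiplicative estimate $\phi \approx \eta$ to the shifted-argument equivalence $\phi \simeq \eta$ is delicate because $\phi$ is not assumed to be doubling at this level of generality. Parts (3)--(5) are essentially algebraic manipulations once the scaling structure encoded by \ainc{p}/\adec{p/(p-1)} is in hand.
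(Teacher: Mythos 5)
The paper itself offers no proof of this lemma: it is quoted verbatim from H\"ast\"o--Ok (Propositions 3.5 \& 3.6) and the Harjulehto--H\"ast\"o monograph, so your sketch is necessarily a different route; for parts (1), (2) and the forward half of (3) it is in fact the standard route of the cited sources (greatest convex minorant, inversion of the envelope, scaling inside the supremum defining $\phi^\ast$), and those parts are essentially sound, up to minor bookkeeping (in the forward direction of (3) the substitution should be $s=(L\lambda)^{1/(p-1)}\sigma$ rather than $s=\lambda^{1/(p-1)}\sigma$, otherwise you end up with $\phi^\ast(x,Lt)$ instead of $\phi^\ast(x,t)$ on the right-hand side).

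There are, however, two genuine gaps. First, in part (4) your plan for the second inequality fails: pairing $(\kappa^{1/q'}t,\kappa^{-1/q'}s)$ and ``pulling $\kappa$ out'' of $\phi^\ast(x,\kappa^{-1/q'}s)$ by \ainc{q'} cannot work, because $\kappa^{-1/q'}s\geq s$, so monotonicity already gives $\phi^\ast(x,\kappa^{-1/q'}s)\geq \phi^\ast(x,s)$, and \ainc{q'} at the larger argument yields a lower bound, never the claimed upper bound $\lesssim \kappa\,\phi^\ast(x,s)$; indeed for $\phi(x,t)=t^q$ one has $\phi^\ast(x,\kappa^{-1/q'}s)\approx\kappa^{-1}\phi^\ast(x,s)$, so the chain you describe is false. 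The correct argument pairs $(\kappa^{-1/q'}t,\kappa^{1/q'}s)$: then \adec{q} of $\phi$ gives $\phi(x,\kappa^{-1/q'}t)\lesssim\kappa^{-(q-1)}\phi(x,t)$ and \ainc{q'} of $\phi^\ast$ gives $\phi^\ast(x,\kappa^{1/q'}s)\lesssim\kappa\,\phi^\ast(x,s)$ (this also shows that the middle expression of the second display, as printed, has the $\kappa$-exponents on $t$ and $s$ interchanged). Second, your converse in (3) via $\phi^{\ast\ast}\simeq\phi$ is circular at the stated level of generality: for a bare $\Phi$-prefunction the conjugate only sees the convex minorant of $\phi$, and $\phi\simeq\phi^{\ast\ast}$ is available only after one knows $\phi$ is equivalent to a convex function, i.e.\ under \ainc{1}; you should either add that hypothesis (which is what is actually used in the paper, where only the forward implications matter) or argue the converse directly from the definition. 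Finally, two slips in (5): the supremum defining $\phi^\ast(x,\phi'(x,t))$ is attained at $s=t$, not at $s=\phi'(x,t)$, and the equivalence $t\phi'(x,t)\approx\phi(x,t)$ should be derived from \adec{q} of $\phi'$ (the upper bound $\phi(x,t)\leq t\phi'(x,t)$ being free from convexity), since Lemma \ref{est:Orlicz} assumes \inc{p-1} of $\phi'$, which is not part of the hypotheses here.
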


Lastly we have a Sobolev--Poincar\'e inequality which is used for higher integrability results.

\begin{lem}[Sobolev--Poincar\'e inequality, Theorem 3.3 \cite{HarHM18}] \label{thm:SP}
Let $Q_r \subset \Rn$ be a cube or a ball with diameter $2r$.
Let $\phi \in \Phi_w(Q_r)$ satisfy \ainc{p}, \adec{q},  \azero{}, and \aone{}. 
Let $s < \frac{n}{n-1}.$
Then there exists $\beta = \beta(n,s,\phi)> 0$ such that
\begin{equation}\label{SPineq2}
\bigg(\fint_{Q_r} \phi \bigg(x, \frac{\beta\,|u|}r\bigg)^s\,dx 
\bigg)^{\frac1s}
\lesssim
\fint_{Q_r} \phi (x, |\nabla u|) \,dx  + \frac{|\{\nabla u \neq 0\}\cap {Q_r}|}
{|{Q_r}|}
\end{equation}
for any $u \in W^{1,1}_0({Q_r})$ with $\| \nabla u \|_{L^{\phi}} <1$. 
If additionally $s\le p$, then 
\begin{equation}\label{SPineq}
\fint_{Q_r} \phi \bigg(x, \beta\,\frac{|u-u_{Q_r}|}{r}\bigg)\,dx 
\lesssim
\bigg(\fint_{Q_r} \phi (x, |\nabla u|)^{\frac1s} \,dx \bigg)^{s} + 1
\end{equation}
for any $u \in W^{1,1}({Q_r})$ with $\| \nabla u \|_{L^{\phi^{1/s}}}\le M$, and the implicit constant depends on $M$.
The average $u_{Q_r}$ can be replaced by $u_Q$ for some cube or ball $Q\subset Q_r$ 
with $|Q| > \mu |Q_r|$, in which case the constant depends also on $\mu$.  
\end{lem}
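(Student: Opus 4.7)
The plan is to establish \eqref{SPineq2} first and then derive \eqref{SPineq} from it by a standard mean-subtraction and cutoff argument. The starting point for \eqref{SPineq2} is the classical $L^{n/(n-1)}$-$L^1$ Sobolev embedding $\bigl(\fint_{Q_r}(|u|/r)^{n/(n-1)}\,dx\bigr)^{(n-1)/n} \lesssim \fint_{Q_r}|\nabla u|\,dx$, available for $u\in W^{1,1}_0(Q_r)$. The entire task is to upgrade this elementary estimate to a $\phi^s$-vs-$\phi$ estimate in the presence of the non-autonomous, generalized Orlicz growth; the exponent ceiling $s<n/(n-1)$ is dictated precisely by this underlying embedding.

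The core of the argument is to freeze $\phi$ on $Q_r$, which is where \azero{} and \aone{} enter. Writing $\bar\phi(t):=\phi^{-}_{Q_r}(t)$, condition \aone{} provides $\phi(x,t)\approx \bar\phi(t)$ uniformly in $x\in Q_r$ on the admissible range $\bar\phi(t)\in[1,1/|Q_r|]$. I would split the cube into three regimes according to the size of $\bar\phi(|\nabla u|)$: (a)~the middle regime $\bar\phi(|\nabla u|)\in[1,1/|Q_r|]$, handled by the autonomous Sobolev--Orlicz inequality applied to $\bar\phi$ (whose convex envelope one extracts via Lemma~\ref{lem:phi-estimates}(1)) combined with Jensen at exponent $1/s$; (b)~the small regime $\bar\phi(|\nabla u|)<1$, where \azero{} and the bound $\bar\phi\le 1$ let one absorb the contribution into the measure term $|\{\nabla u\ne 0\}\cap Q_r|/|Q_r|$; (c)~the large regime $\bar\phi(|\nabla u|)>1/|Q_r|$, whose measure is small because the normalization $\|\nabla u\|_{L^\phi}<1$ forces $\int_{Q_r}\phi(x,|\nabla u|)\,dx\le 1$. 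The Young-type inequality of Lemma~\ref{lem:phi-estimates}(4), together with \ainc{p} and \adec{q}, glues the three regimes into the claimed bound.

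For \eqref{SPineq}, I would apply \eqref{SPineq2} to $(u-u_{Q_r})\eta$, with $\eta$ a smooth cutoff that vanishes near $\partial Q_r$, or more robustly via the classical chaining over dyadic subcubes, to convert the problem into the zero-trace setting. The hypothesis $s\le p$ guarantees that $\phi^{1/s}$ still satisfies \ainc{p/s} with $p/s\ge1$, so it remains a (generalized) $\Phi$-function and Jensen's inequality at the exponent $1/s$ is legitimate; this is exactly what makes the right-hand side $\bigl(\fint\phi(x,|\nabla u|)^{1/s}\,dx\bigr)^s$ meaningful and finite on the relevant range. The extension from $u_{Q_r}$ to a cube or ball $Q\subset Q_r$ with $|Q|>\mu|Q_r|$ is immediate from $|u_Q-u_{Q_r}|\lesssim_\mu \fint_{Q_r}|u-u_{Q_r}|\,dx$.

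The hardest step is the freezing argument in \eqref{SPineq2}: ensuring that the transitions across the endpoints $\bar\phi(t)=1$ and $\bar\phi(t)=1/|Q_r|$ of the admissible range in \aone{} are absorbed cleanly, and that the $L^s$-type gain from the classical Sobolev embedding is preserved through the $\phi$-scaling without losing the structural constants. The non-autonomous dependence through \aone{} is genuinely the only substantive difficulty; once one is reduced to $\bar\phi$, the autonomous Sobolev--Orlicz inequality is a routine combination of classical Sobolev with convexity and the \ainc{}/\adec{} control on the growth.
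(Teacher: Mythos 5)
This lemma is imported verbatim from Theorem~3.3 of \cite{HarHM18} (Harjulehto--H\"ast\"o--Lee), and the present paper gives no proof of it; there is therefore no ``paper's own proof'' to compare against. Your sketch is broadly consistent with how Sobolev--Poincar\'e inequalities are established in the generalized Orlicz literature: freeze $\phi$ to $\phi^-_{Q_r}$ using \azero{} and \aone{}, feed the classical $W^{1,1}_0\to L^{n/(n-1)}$ embedding through the frozen $\Phi$-function, and patch the regimes where the \aone{} window $[1,1/|Q_r|]$ is exceeded or undershot, using the modular normalization $\|\nabla u\|_{L^\phi}<1$ for the large regime and \azero{} for the small one.

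Two points in your outline would need repair if you carried it out. First, your regime decomposition is indexed by $\bar\phi(|\nabla u|)$, but the quantity being $\phi$-evaluated on the left-hand side is $|u|/r$, not $|\nabla u|$; the decomposition and the key comparison estimate (the precise consequence of \aone{} that one needs, essentially $\phi^-_{Q_r}(\beta t)\lesssim \phi(x,t)+1$ for all $x\in Q_r$ whenever $\phi(x,t)\le |Q_r|^{-1}$) must be tracked for both $|u|/r$ and $|\nabla u|$ simultaneously, and the $s$-power on the left makes the bookkeeping nontrivial. Second, deriving \eqref{SPineq} from \eqref{SPineq2} by multiplying $u-u_{Q_r}$ by a cutoff vanishing near $\partial Q_r$ does not immediately recover the full average over $Q_r$ on the left (the cutoff kills a boundary layer, and controlling that layer reintroduces precisely the Poincar\'e estimate one is trying to prove); the cleaner route, which is what \cite{HarHM18} follows, is to run the same freezing argument directly off the classical non--zero-trace Sobolev--Poincar\'e inequality. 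Your observation that $s\le p$ is exactly what keeps $\phi^{1/s}$ a weak $\Phi$-function via \ainc{p/s} is correct and is indeed the reason that hypothesis appears.
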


\subsection*{Essential supremum and infimum estimates}
\noindent Our first main theorem proven in Section \ref{sec:thm1} is local H\"older continuity of a solution to an obstacle problem with H\"older continuous obstacle $\psi$. This follows the classical route of estimating supremum and infimum of minimizer $u$ with its integral averages. 
We prove the essential supremum result in the context of solutions to highlight that the result follows also in that case. The only difference this makes is in the proof of Caccioppoli type inequality in Proposition \ref{prop:sup-estimate} and the requirement for the $\Phi$-function to be differentiable with respect to the second variable. For minimizers, the proof can be modified from an analogous result in \cite{HarHM18}.

For the following we write
 \[
%A_r :=
 A(k,r) := Q_r \cap \{u >k\}. 
\]

We first recall the supremum %and infimum
 bounds for the local minimizer $u$ of the $\phi$-energy.

\begin{prop}[Proposition 5.5 and Corollary 5.8 \cite{HarHM18}]
\label{prop:esssupinf}Let $\phi\in\Phi_w(\Omega)$ 
satisfy \ainc{p}, \adec{q},  \azero{}, and \aone{}. 
Suppose that $u\in W^{1,\phi}_\loc(\Omega)$
satisfies $\rho_{L^\phi(Q_{2r})}(\nabla u)\le 1$ for  $ Q_{2r}\subset \Omega$. 
Suppose that $u$ satisfies the Caccioppoli inequality %\eqref{eq:caccioppoli}.
\begin{equation}\label{eq:caccioppoli}
\int_{A(\ell, r)} \phi(x,|\nabla (u-\ell)_+|) \,dx 
\lesssim 
\int_{A(\ell, 2r)} \phi\Big(x,\frac{(u-\ell)_+}{r}\Big)  \,dx
\end{equation}
for any $\ell \geq 0$.
 Then 
$u_+$ is bounded and 
\begin{equation}\label{eq:supbdd}
\esssup_{Q_{r/2}} u_+
 \lesssim  
    \bigg( \fint_{Q_r}u_+^q\,dx\bigg)^{\frac1q} 
		+ |u_{Q_{r/2}}| +r
\end{equation}
for any $Q_{r}\subset \Omega$.
The term $|u_{Q_{r/2}}|$ can be omitted if $u$ is non-negative. % or if $\big|\{u_+ =0\} \cap Q_r\big| \ge \frac12 |Q_r|$.

Furthermore, 
%Corollary 5.8 in the $\omega$-minimizer paper
if $u \in L^{\infty}(Q_r)$ satisfies \eqref{eq:supbdd} without the term $|u_{Q_{r/2}}|$, then 
\begin{equation}\label{eq:supbddh}
\esssup_{Q_{r/2}} u_+
 \lesssim  
    \bigg( \fint_{Q_r} u_+^h\,dx\bigg)^{\frac1h} 
		  +r
\end{equation}
for any $h \in (0,\infty).$
\end{prop}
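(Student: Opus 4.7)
The plan is a De~Giorgi iteration adapted to generalized Orlicz growth, feeding the Caccioppoli inequality \eqref{eq:caccioppoli} into the Sobolev--Poincar\'e inequality of Lemma~\ref{thm:SP}. Writing the truncation $v_\ell := (u-\ell)_+$, after composition with a standard cutoff one can apply the zero-boundary form \eqref{SPineq2} to obtain, for suitable $s>1$ coming from \ainc p,
\[
\bigg(\fint_{Q_r}\phi\Big(x, \tfrac{\beta v_\ell}{r}\Big)^{s}dx\bigg)^{1/s}
\lesssim \fint_{Q_{2r}}\phi(x,|\nabla v_\ell|)\,dx + \frac{|A(\ell,2r)|}{|Q_{2r}|}
\lesssim \fint_{A(\ell,2r)}\phi\Big(x,\tfrac{v_\ell}{r}\Big)dx + \frac{|A(\ell,2r)|}{|Q_{2r}|},
\]
where in the last step I used \eqref{eq:caccioppoli}. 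The gain of integrability on the left combined with H\"older's inequality on the right yields a self-improving inequality on super-level sets: in the shorthand $I(\ell,\rho):=\fint_{Q_\rho}\phi(x,v_\ell/\rho)\,dx$ one gets $I(\ell',r')\lesssim (r/(\ell'-\ell))^\sigma I(\ell,r)^{1+\sigma}$ for $\ell'>\ell$ and $r'<r$, after also exploiting \adec q and \azero{} to relate $\phi(x,\cdot)$ with power functions.

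Next I would iterate on the standard sequence of nested cubes $r_k:=\tfrac r2(1+2^{-k})$ with levels $\ell_k:=k_0(1-2^{-k})$, producing a recursion $Y_{k+1}\le C\,b^{k}Y_k^{1+\sigma}$ for $Y_k:=I(\ell_k,r_k)$; by the well-known lemma, $Y_k\to 0$ provided $Y_0\le c_0$ with $c_0$ determined by $C,b,\sigma$. This smallness of $Y_0$ is ensured by choosing
\[
k_0 \simeq \bigg(\fint_{Q_r} u_+^q\,dx\bigg)^{1/q} + |u_{Q_{r/2}}| + r,
\]
where the middle term arises because, to pass from $\phi(x,v_{\ell_0}/r)$ to a pure $L^q$ average of $u_+$, one uses the centered form \eqref{SPineq} of Lemma~\ref{thm:SP} together with \azero{} and \adec q; the subtraction of the average of $u$ (not $u_+$) is what forces the $|u_{Q_{r/2}}|$ correction. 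When $u\ge 0$ one may start the iteration from $\ell_0=0$ directly, the non-centered Sobolev--Poincar\'e \eqref{SPineq2} already suffices, and this term drops out. The conclusion that $v_{k_0}=0$ a.e.\ in $Q_{r/2}$ is exactly \eqref{eq:supbdd}.

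For the final passage from exponent $q$ to an arbitrary $h\in(0,\infty)$ in \eqref{eq:supbddh} the standard interpolation trick applies. If $u$ is already known to be bounded, set $M(\rho):=\esssup_{Q_\rho}u_+$ and apply the $q$-version of \eqref{eq:supbdd} (without the $|u_{Q_{r/2}}|$ term) on an arbitrary subcube $Q_\rho\Subset Q_r$, then estimate
\[
\bigg(\fint_{Q_\rho}u_+^{q}dx\bigg)^{1/q}
\le M(\rho)^{\,1-h/q}\bigg(\fint_{Q_\rho}u_+^{h}dx\bigg)^{1/q}
\le \tfrac12 M(\rho) + C\bigg(\fint_{Q_\rho}u_+^{h}dx\bigg)^{1/h}
\]
by Young's inequality, giving $M(r')\le\tfrac12 M(r'')+C(r''-r')^{-n/h}(\fint_{Q_r}u_+^h)^{1/h}+r''$ for concentric radii $r/2\le r'<r''\le r$. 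The classical iteration lemma of Giaquinta--Giusti for sub-geometric sequences then absorbs the $\tfrac12 M$ term and yields \eqref{eq:supbddh}.

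The main technical obstacle is bookkeeping the interaction between the non-homogeneous $\Phi$-function and the powers of $u_+$: at several points one must move between $\phi(x,t)$ and $t^p$ or $t^q$ by invoking \azero, \ainc p and \adec q on the appropriate $t$-range, and verify that the thresholds dictated by \aone{} (namely $\phi^-_{B_r}(t)\in[1,1/|B_r|]$) are respected throughout the iteration. Once the $L^\phi$-modular of $\nabla u$ is normalized to at most $1$ as in the hypothesis, these power comparisons are quantitative and uniform in the iteration, and the above scheme goes through without further difficulty.
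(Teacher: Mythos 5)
The paper cites this result directly from [HarHM18] (Proposition~5.5 and Corollary~5.8) without reproducing the proof, so there is no internal argument to compare against; your write-up is a reconstruction. The overall route you take --- De~Giorgi iteration via the Caccioppoli inequality \eqref{eq:caccioppoli} and the Sobolev--Poincar\'e estimates of Lemma~\ref{thm:SP} to obtain \eqref{eq:supbdd}, followed by a Giaquinta--Giusti interpolation and absorption step to pass from exponent $q$ to an arbitrary $h$ in \eqref{eq:supbddh} --- is indeed the standard one and corresponds in outline to [HarHM18]. The interpolation step for $h<q$ is correctly set up (Young's inequality to split off $\tfrac12 M(\rho)$, then the iteration lemma to absorb it).

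Two spots in the De~Giorgi sketch would not survive being made precise. First, in the displayed recursion $I(\ell',r')\lesssim (r/(\ell'-\ell))^\sigma I(\ell,r)^{1+\sigma}$ the same $\sigma$ cannot serve simultaneously as the Chebyshev/\adec{q}-type power on $r/(\ell'-\ell)$ and as the Sobolev gain producing $1+\sigma$; these two exponents are structurally different and must be tracked separately for the fast-geometric convergence lemma to apply. Second, and more substantively, your explanation of where $|u_{Q_{r/2}}|$ comes from is not convincing: the centered Sobolev--Poincar\'e \eqref{SPineq} bounds a modular of $u-u_{Q_r}$ by a modular of $\nabla u$ and plays no role in converting $\phi(x,v_{\ell_0}/r)$ into an $L^q$ average of $u_+$. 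In [HarHM18] the average term is a byproduct of the normalization that is required so that the thresholds in \aone{} (namely $\phi^-_{B}(t)\in[1,1/|B|]$) and the modular bound $\rho_{L^\phi(Q_{2r})}(\nabla u)\leq 1$ remain valid through the iteration; this is also where the trailing $+r$ originates. You flag this normalization issue as ``bookkeeping'' at the very end of your plan, but it is in fact the technical heart that distinguishes the generalized Orlicz case from the classical $p$-growth case and is precisely what produces the extra terms you are trying to explain.
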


%\begin{prop}[Corollary 6.4 \cite{HarHM18}]
%\label{prop:inf-estimate}
%Let $\phi\in\Phi_w(\Omega)$ 
%satisfy \ainc{p}, \adec{q},  \azero{}, and \aone{}. 
%%Suppose that  $u\in W^{1,\phi}_\loc(\Omega)$ is a non-negative solution of the $\mathcal{K}_{\psi}(\Omega)$-obstacle problem.
%Then there exists $h_0>0$ such that
%\begin{align*}
% \left( \fint_{Q_r} u^{h_0}\,dx\right)^\frac1{h_0}
% \lesssim \left( 
% \essinf_{Q_{r/2}} u +  r\right)
%\end{align*}
%when $Q_{2r}\subset \Omega$ and 
%$\rho_{L^\phi(Q_{2r})}(|\nabla u|) \le 1$.
%\end{prop}
%

We need similar  supremum and infimum bounds for the solutions of the obstacle problem. We start with the supremum estimate and base our proof on \cite[Section 5]{HarHM18}. Therefore we only need to prove that solution to the obstacle problem satisfies the Caccioppoli type energy estimate \eqref{eq:caccioppoli}. Note that in the case of obstacle problems, we need to restrict possible values of $\ell$ using the obstacle $\psi$.

\begin{prop}%\marginpar{A: I included the differentiability assumption}
\label{prop:sup-estimate}
Let $\phi\in\Phi_w(\Omega) \cap C^1([0,\infty))$ satisfy \ainc{p}, \adec{q},  \azero{}, and \aone{}. 
Let $u \in \mathcal{K}^{\phi}_{\psi}(\Omega)$ be a solution to the $\mathcal{K}^{\phi}_{\psi}(\Omega)$- obstacle problem. Then if $\psi \in W^{1,\phi}(\Omega) \cap L^{\infty}_{\loc}(\Omega)$ and $\theta \in \left [ \frac{1}{2}, 1 \right )$ we have
\begin{align}\label{eq:supbddobstacle}
\esssup_{Q_{\theta r}} (u-\ell)_+ \lesssim (1-\theta)^{-4nq^2} \left[   \left(\fint_{Q_{r}} (u-\ell)_+^{q} \, dx \right )^{1/q} +| (u-\ell)_{Q_{r/2}} | \right ] + r 
\end{align}
for  any $Q_{2r}\subset \Omega$
and $\ell \geq \sup_{Q_{2r}} \psi,$ provided that $\rho_{L^\phi(Q_{2r})}(|\nabla u|) \le 1$. The term $|(u-\ell)_{Q_{r/2}}|$ can be omitted if $u-\ell$ is non-negative almost everywhere in $Q_{2r}$.

Furthermore, 
if $u \in L^{\infty}(Q_r)$ satisfies \eqref{eq:supbddobstacle} without the term $| (u-\ell)_{Q_{r/2}} | $, then 
\begin{equation}\label{eq:supbddhobstacleh}
\esssup_{Q_{r/2}} (u-\ell)_+
 \lesssim  
    \bigg( \fint_{Q_r} (u-\ell)_+^h\,dx\bigg)^{\frac1h} 
		  +r
\end{equation}
for any $h \in (0,\infty).$

\end{prop}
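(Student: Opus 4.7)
The plan is to reduce the statement to the unconstrained supremum bound of Proposition~\ref{prop:esssupinf} applied to $v := u - \ell$. The only additional ingredient is the Caccioppoli inequality \eqref{eq:caccioppoli}, and this is the only point where the obstacle constraint enters. Once Caccioppoli is proven for every $\ell \geq \sup_{Q_{2r}}\psi$, applying Proposition~\ref{prop:esssupinf} to $v$ produces \eqref{eq:supbddobstacle}; the factor $(1-\theta)^{-4nq^2}$ arises from the standard De Giorgi iteration over concentric cubes $Q_{\theta_k r}$ with $\theta_k \searrow \theta$, as carried out in \cite[\S 5]{HarHM18}. When $u-\ell \ge 0$ a.e.\ in $Q_{2r}$, the average $(u-\ell)_{Q_{r/2}}$ is non-negative and is controlled by $(\fint (u-\ell)_+^q\,dx)^{1/q}$ via Jensen, so the term drops out. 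The upgrade \eqref{eq:supbddhobstacleh} to any $L^h$-supremum bound then follows from \eqref{eq:supbddobstacle} (without the average term) via the classical interpolation-and-absorption trick for supremum estimates.

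To prove Caccioppoli, fix $\ell \geq \sup_{Q_{2r}}\psi$ and a cutoff $\zeta\in C_c^\infty(Q_{2r})$ with $0\le\zeta\le 1$, $\zeta\equiv 1$ on $Q_r$ and $|\nabla\zeta|\lesssim 1/r$. The natural choice of test function is
\[
\eta := -\zeta^{q}(u-\ell)_+ \in W^{1,\phi}(\Omega),
\]
which has compact support in $\Omega$. The essential check that $u+\eta \ge \psi$ a.e.\ in $\Omega$ goes as follows. On $\{u\le\ell\}$ one has $u+\eta = u \ge \psi$ since $u\in\mathcal{K}^{\phi}_{\psi}(\Omega)$, while on $\{u>\ell\}\cap Q_{2r}$
\[
u+\eta = (1-\zeta^{q})u + \zeta^{q}\ell \;\ge\; \ell \;\ge\; \sup_{Q_{2r}}\psi \;\ge\; \psi \quad\text{a.e.},
\]
and outside $Q_{2r}$ the function $\eta$ vanishes. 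This is the unique place where the hypothesis $\ell \ge \sup_{Q_{2r}}\psi$ is exploited, and it makes $\eta$ admissible in \eqref{weakformobstacle}.

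Inserting $\eta$ into \eqref{weakformobstacle} and observing that $\nabla(u-\ell)_+ = \chi_{A(\ell,2r)}\nabla u$, one obtains
\[
\int_{A(\ell,2r)} \zeta^{q}\, \partial\phi(x,\nabla u)\cdot\nabla u \,dx
\;\le\; q\int_{A(\ell,2r)} \zeta^{q-1}(u-\ell)_+\,|\partial\phi(x,\nabla u)|\,|\nabla \zeta|\,dx.
\]
By Lemma~\ref{est:Orlicz}(1), $\partial\phi(x,\nabla u)\cdot\nabla u \approx \phi(x,|\nabla u|)$ and $|\partial\phi(x,\nabla u)| \approx \phi(x,|\nabla u|)/|\nabla u|$. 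Applying Young's inequality from Lemma~\ref{lem:phi-estimates}(4) with small parameter $\kappa>0$, together with the bound $\phi^\ast(x,\phi(x,t)/t) \lesssim \phi(x,t)$ from Lemma~\ref{lem:phi-estimates}(5), the right-hand side is controlled by
\[
\kappa\int_{A(\ell,2r)} \zeta^{q}\phi(x,|\nabla u|)\,dx \;+\; C(\kappa)\int_{A(\ell,2r)} \phi\!\left(x,\tfrac{(u-\ell)_+}{r}\right)\,dx.
\]
Taking $\kappa$ sufficiently small so as to absorb the first term into the left-hand side and using $\zeta\equiv 1$ on $Q_r$ delivers \eqref{eq:caccioppoli}, after which Proposition~\ref{prop:esssupinf} and the interpolation step complete the proof. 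The genuinely delicate point of the argument is the admissibility of $\eta$; the analytic portion (Young's inequality followed by absorption) is standard.
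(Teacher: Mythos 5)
Your proof is correct and takes essentially the same route as the paper: reduce to Proposition~\ref{prop:esssupinf} applied to $v=u-\ell$ by verifying the Caccioppoli inequality, with the admissibility of the test function $\eta=-\zeta^{q}(u-\ell)_+$ being the one place where $\ell\geq\sup_{Q_{2r}}\psi$ enters. The only cosmetic difference is that the paper proves Caccioppoli between arbitrary nested cubes $Q_\sigma\subset Q_\rho\subset Q_{2r}$ and at level $\ell+k$ for all $k\geq0$, while you fix the pair $Q_r\subset Q_{2r}$ and level $\ell$; since your argument works for every $\ell\geq\sup_{Q_{2r}}\psi$ and every admissible $r$, this recovers \eqref{eq:caccioppoli} at all levels needed for $v=u-\ell_0$ and is sufficient to invoke Proposition~\ref{prop:esssupinf}.
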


\begin{proof}
If $v:= u- \ell$ satisfies Caccioppoli inequality \eqref{eq:caccioppoli}, then Proposition~\ref{prop:esssupinf} implies the desired bound.
Hence it suffices to show that $v$ satisfies \eqref{eq:caccioppoli} for  $\ell \geq \sup_{Q_{2r}} \psi.$

Consider any cubes $ Q_{\sigma} \subset Q_{\rho} \subset Q_{2r}$. Let $k \geq 0$ and let $\tau \in C^{\infty}_0 (Q_\rho)$ be a cut off function such that $0 \leq \tau \leq 1$ in $Q_\rho,  \ \tau = 1$  in $Q_{\sigma}$ and $|\nabla \tau| \leq \frac{c(n)}{\rho-\sigma}$.

Since $(v-k)_+ = (u-\ell -k)_+ \leq u -\psi$, we take a test function $\eta= -(v-k)_+ \tau^q$ in \eqref{weakformobstacle} to discover that 
\begin{align*}
& \int_{A(k+\ell, \rho)}\partial \phi(x, |\nabla u|) \cdot \nabla\left(- (v-k)_+ \tau^q \right) \,dx  \\
 &  = -\int_{A(k+\ell, \rho)}\left[  \partial \phi(x, |\nabla u|)  \cdot \nabla u\right]\tau^q  \,dx - q\int_{A(k+\ell, \rho)}\left[ \partial \phi(x, |\nabla u|) \cdot \nabla \tau \right] (v-k)_+  \tau^{q-1} \,dx \geq 0,
\end{align*} 
where $A(k+\ell, \rho) := Q_{\rho} \cap \{ u > \ell + k \}.$
Then from Lemma~\ref{lem:phi-estimates} (5) and the fact that $\phi^*(x,\phi_t(x,t)) \approx \phi^*(x, \phi(x,t)/t) \approx \phi(x,t)$ and $\phi^*$ satisfies \ainc{\frac{q}{q-1}}, we deduce with \adec{} that 
\begin{align*}
& \int_{A(k+\ell, \rho)} \phi(x, |\nabla u|) \tau^q \,dx \lesssim \int_{A(k+\ell, \rho)} \partial \phi(x, |\nabla u|) |\nabla u| \tau^q \,dx \\
& \quad \leq q\int_{A(k+\ell, \rho)}|\partial \phi(x, |\nabla u|)
| |\nabla \tau|  (v-k)_+  \tau^{q-1} \,dx \\
& \quad\lesssim \kappa \int_{A(k+\ell, \rho)} \phi^*(x, |\partial\phi(x,|\nabla u|)| \tau^{q-1} )\,dx  + \frac{1}{\kappa^{q-1}} \int_{Q_{\rho}} \phi(x, |\nabla \tau| (v-k)_+ )\,dx   \\
&  \quad\lesssim \kappa \int_{A(k+\ell, \rho)} \phi^*(x, |\partial\phi(x,|\nabla u|)|) \tau^q\,dx  + \frac{1}{\kappa^{q-1}}\int_{Q_{\rho}} \phi(x, |\nabla \tau| (v-k)_+ )\,dx   \\
&   \quad\lesssim \kappa \int_{A(k+\ell, \rho)} \phi(x, |\nabla u|) \tau^q\,dx  + c_{\kappa} \int_{Q_{\rho}} \phi\left(x, \frac{(v-k)_+}{\rho-\sigma} \right)\,dx
\end{align*}
for any $\kappa \in (0,1).$ 
Therefore, by choosing $\kappa$ sufficiently small, we conclude 
\begin{equation*}
 \int_{Q_{\sigma}} \phi(x, |\nabla (v-k)_+|) \tau^q \,dx \lesssim  \int_{Q_{\rho}} \phi\left(x, \frac{(v-k)_+}{\rho-\sigma} \right)\,dx. \qedhere
\end{equation*}
\end{proof}

\medskip
Since solution to an obstacle problem is also a superminimizer, the standard arguments provide the following weak Harnack inequality, see \cite[Corollary 6.4]{HarHM18}.

\begin{prop}
\label{prop:inf-estimate}
Let $\phi\in\Phi_w(\Omega)$ 
satisfy \ainc{p}, \adec{q},  \azero{}, and \aone{}. 
Suppose that  $u\in W^{1,\phi}_\loc(\Omega)$
is a non-negative solution of the $\mathcal{K}^{\phi}_{\psi}(\Omega)$-obstacle problem.
Then there exists $h_0>0$ such that
\begin{align*}
 \left( \fint_{Q_r} u^{h_0}\,dx\right)^\frac1{h_0}
 \lesssim
 \essinf_{Q_{r/2}} u +  r
\end{align*}
when $Q_{2r}\subset \Omega$ and 
$\rho_{L^\phi(Q_{2r})}(|\nabla u|) \le 1$.
\end{prop}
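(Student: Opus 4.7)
The strategy is to reduce to the weak Harnack inequality for supersolutions of the $\phi$-Laplacian already established in \cite{HarHM18}, exactly as the authors indicate. The only genuinely obstacle-specific step is to observe that a solution $u$ of the $\mathcal{K}^{\phi}_{\psi}(\Omega)$-obstacle problem is a $\phi$-supersolution; the Harnack-type iteration is then completely standard.

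First I would verify the supersolution property. Take any nonnegative $\eta \in W^{1,\phi}(\Omega)$ with compact support. Because $u \ge \psi$ a.e.\ in $\Omega$, one has the trivial pointwise inequality $\eta \ge 0 \ge \psi - u$, so $\eta$ is an admissible test function in \eqref{weakformobstacle} and
\[
\int_{\Omega}\partial\phi(x,\nabla u)\cdot \nabla \eta\, dx \ge 0.
\]
Equivalently, in the minimizer formulation, for any $v \in W^{1,\phi}(Q_{2r})$ with $v\ge u$ and $v-u\in W^{1,\phi}_0(Q_{2r})$, the function $v$ still lies in $\mathcal{K}^{\phi}_{\psi}$, so $\int\phi(x,|\nabla u|)\,dx \le \int\phi(x,|\nabla v|)\,dx$. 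Thus $u$ is a superminimizer in the sense of \cite{HarHM18}.

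Next I would invoke \cite[Corollary 6.4]{HarHM18}: under \ainc{p}, \adec{q}, \azero{} and \aone{}, every nonnegative $\phi$-supersolution $u$ on $Q_{2r}\subset\Omega$ with $\rho_{L^\phi(Q_{2r})}(|\nabla u|)\le 1$ satisfies the weak Harnack inequality
\[
\Bigl(\fint_{Q_r} u^{h_0}\,dx\Bigr)^{1/h_0} \lesssim \essinf_{Q_{r/2}} u + r
\]
for some $h_0 > 0$. The proof there runs a Moser iteration on negative powers of $u + \delta$ using the Caccioppoli inequality in the supersolution form (which follows by testing \eqref{weakformobstacle} with $\eta = (u+\delta)^{-s}\tau^q$, legal because this $\eta$ is nonnegative), Lemma~\ref{thm:SP}, and a crossover via a logarithmic estimate; finally one lets $\delta\to 0^+$.

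The main (indeed only) non-routine point is the supersolution step above, which is essentially free once one recognizes that the one-sided constraint $u\ge \psi$ means the admissible cone of perturbations contains all nonnegative test functions. The additive $r$ on the right-hand side is the standard inhomogeneity arising from \aone{}, reflecting that constants are not exactly $\phi$-harmonic in the $x$-dependent setting.
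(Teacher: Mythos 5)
Your proposal is correct and takes exactly the same route as the paper, which simply observes that a solution to the obstacle problem is a superminimizer (because nonnegative perturbations preserve the constraint $u\ge\psi$) and then cites \cite[Corollary 6.4]{HarHM18} for the weak Harnack inequality. Your verification that any nonnegative $\eta$ with compact support satisfies $\eta\ge 0\ge\psi-u$ and is hence admissible in \eqref{weakformobstacle} is precisely the content the paper compresses into its one-sentence justification.
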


%\begin{proof}
%\end{proof}

\section{H\"older continuity}\label{sec:thm1}

Now we are ready to prove local H\"older continuity of the solution to an obstacle problem with H\"older continuous obstacle. Compared to higher regularity results in later sections, we assume the weaker condition \aone{} instead of \wvaone{}.

\begin{proof}[Proof of Theorem~\ref{thm:C^alpha}]
Since $\psi \in C_{\loc}^{0,\beta}(\Omega)$ for some $\beta \in (0,1),$ we note that for any $ Q_{2r} \subset \Omega$ with $r<1$, there exists a constant $[\psi]_\beta>0$ such that 
\begin{align*}
|\psi(x)-\psi(y)| \leq [\psi]_{\beta} |x-y|^\beta \ \text{ for all  }x,y \in Q_{2r}.
\end{align*}
From Propositions \ref{prop:sup-estimate}-\ref{prop:inf-estimate}, we note that $u$ is locally bounded in $\Omega$. Then for $Q_{2r}\subset \Omega$,
%
% Let $Q_R$ be a cube satisfying the assumptions in  Propositions \ref{prop:sup-estimate}-\ref{prop:inf-estimate}. For $r\leq R,$
 we define
\begin{align*}
\overline{u}(r) := \esssup_{x \in Q_r} u(x), \ \  \underline{u}(r):= \essinf_{x \in Q_r} u(x), \ \ \overline{\psi}(r) := \esssup_{x \in Q_r} \psi(x), \ \ \underline{\psi}(r) := \essinf_{x\in Q_r} \psi(x).
\end{align*}

Next we consider two cases: $ \underline{u}(r) \geq \overline{\psi}(r) $ and $\underline{u}(r) \leq \overline{\psi}(r)$. For the first case we can use Proposition \ref{prop:sup-estimate} with $\ell=\underline{u}(r)$. % and $R = r/2$.
 As $u-\underline{u}(r)$ is always nonnegative in $Q_r$, we can omit the average term and this yields an equivalent form of \eqref{eq:supbddhobstacleh}
\begin{align}\label{ineq:oscu1}
&\overline{u}(r/4) - \underline{u}(r) = \esssup_{ Q_{r/4}} (u-\underline{u}(r)) \lesssim \bigg [\fint_{Q_{r/2}} (u-\underline{u}(r))^{h} \, dx \bigg ]^{1/h}  +r 
\end{align}
for any $h\in(0,\infty).$

For the second case, applying  Proposition~\ref{prop:sup-estimate} with $\ell=\overline{\psi}(r)$ and local H\"older continuity of $\psi$, we get 
 \begin{align*}
&\esssup_{Q_{\theta r/2}} (u-\underline{u}(r)) \leq \esssup_{Q_{ \theta r/2}} (u-\underline{\psi}(r))_+ \leq \esssup_{Q_{\theta r/2}} (u-\overline{\psi}(r))_+ +  [\psi]_{\beta}  r^\beta \\
&\lesssim (1-\theta)^{-4nq^2} \bigg[ \bigg ( \fint_{Q_{r/2}} (u-\overline{\psi}(r))_+^q \, dx \bigg )^{1/q} + |(u-\overline{\psi}(r))_{Q_{r/2}}|  \bigg] + r+ r^\beta \\
&\lesssim (1-\theta)^{-4nq^2}  \bigg[ \bigg ( \fint_{Q_{r/2}} (u-\underline{u}(r))_+^q \, dx \bigg )^{1/q} + |(u-\overline{\psi}(r))_{Q_{r/2}}|  \bigg] + r^\beta.
\end{align*}
Let us briefly focus on the average term. Again, using the H\"older continuity of the obstacle and the inequality $\underline{u}(r) \leq \overline{\psi}(r)$ combined with H\"older inequality to increase the exponent we get
\begin{align*}
|(u-\overline{\psi}(r))_{Q_{r/2}}| &= \left | \fint_{Q_{r/2}} u- \overline{\psi}(r) \, dx \right | = \left | \fint_{Q_{r/2}} u- \underline{\psi}(r) + \underline{\psi}(r) - \overline{\psi}(r) \, dx \right | \\
&\leq \left | \fint_{Q_{r/2}} u- \underline{\psi}(r) \, dx \right | + |\underline{\psi}(r) - \overline{\psi}(r)| \leq  \fint_{Q_{r/2}} u- \underline{\psi}(r) \, dx + [\psi]_{\beta} r^{\beta} \\
&\leq \fint_{Q_{r/2}} (u - \overline{\psi}(r))_+ \, dx + 2 [\psi]_{\beta} r^\beta \leq \fint_{Q_{r/2}} u- \underline{u}(r) \, dx + 2[\psi]_{\beta} r^{\beta}
 \\
 & \leq \left [\fint_{Q_{r/2}} (u- \underline{u}(r))^{q} \, dx \right ]^{1/q} + 2[\psi]_{\beta} r^{\beta}.
\end{align*}
Combining the two previous estimates we have
\begin{align*}
\esssup_{Q_{ \theta r/2}} (u-\underline{u}(r)) &\lesssim (1-\theta)^{-4nq^2} \left  [ \fint_{Q_{r/2}} (u-\underline{u}(r))^q \, dx \right ]^{1/q} + r^\beta.
\end{align*}
Therefore, performing the iteration argument in the same way as the proof of \cite[Corollary 5.8]{HarHM18}, we obtain
\begin{align}\label{ineq:oscu2}
\esssup_{Q_{r/4}} (u-\underline{u}(r)) \lesssim \left [\fint_{Q_{r/2}} (u-\underline{u}(r))^h \, dx \right]^{1/h} + r^\beta
\end{align}
for any $h \in (0, \infty)$.

Combining the inequalities \eqref{ineq:oscu1}-\eqref{ineq:oscu2} %, estimating $r \leq r^{\beta}$
 and choosing $h=h_0$ where $h_0$ is given in Proposition \ref{prop:inf-estimate} 
we get
\begin{align}
\label{eq:osc-u}
\begin{split}
\overline{u}(r/4)-\underline{u}(r)
% \leq \overline{u}(r/4)- \underline{\psi}(r) \leq \overline{u}(r/4) - \overline{\psi}(r) + [\psi]_{\beta} r^{\beta} \\
&\lesssim \left [\fint_{Q_{r/2}} (u-\underline{u}(r))^{h_0} \, dx \right ]^{1/h_0} + r^{\beta}.
\end{split}
\end{align}
 Since for a fixed $r$, the function $u-\underline{u}(r)$ is a nonnegative solution to the $\mathcal{K}^{\phi}_{\psi-\underline{u}(r)}$-obstacle problem, Proposition \ref{prop:inf-estimate} yields that 
\begin{align*}
\left [ \fint_{Q_{r/2}} (u-\underline{u}(r))^{h_0} \, dx \right ]^{1/h_0} %\leq  C( \essinf_{Q_{r/4}} (u-\underline{u}(r)) +r)  
\lesssim \underline{u}(r/4) - \underline{u}(r) +r.
\end{align*}
Combining this with \eqref{eq:osc-u} we arrive at
\begin{align*}
\overline{u}(r/4) - \underline{u}(r) \leq C (\underline{u}(r/4) - \underline{u}(r)) + c r^{\beta}
\end{align*}
for some constants $C, c >0$.
Again there are two cases. First if $(C+1)(\underline{u}(r/4)-\underline{u}(r)) \leq \overline{u}(r)-\underline{u}(r)$, then
\begin{align*}
\overline{u}(r/4) - \underline{u}(r/4) \leq \overline{u}(r/4) - \underline{u}(r) \leq \dfrac{C}{C+1}(\overline{u}(r)-\underline{u}(r)) + c r^{\beta}.
\end{align*}
On the other hand, if $(C+1)(\underline{u}(r/4)-\underline{u}(r)) > \overline{u}(r)-\underline{u}(r)$, then
\begin{align*}
\overline{u}(r/4) - \underline{u}(r/4) &= \overline{u}(r/4) - \underline{u}(r) - (\underline{u}(r/4) - \underline{u}(r)) \\
&<\overline{u}(r/4) - \underline{u}(r) - \dfrac{1}{C+1} (\overline{u}(r)-\underline{u}(r)) \\
&\leq \dfrac{C}{C+1} (\overline{u}(r)-\underline{u}(r)).
\end{align*}
From both cases we arrive to a conclusion that
\begin{align*}
\osc(u,r/4) := \overline{u}(r/4)-\underline{u}(r/4) &\leq \dfrac{C}{C+1} (\overline{u}(r) - \underline{u}(r)) + cr^{\beta} \\
&= \dfrac{C}{C+1} \osc(u,r) + cr^{\beta}.
\end{align*}
%for %$\theta \in (0,1)$ and 
%$0<r \leq R$.%, which means the H\"older continuity of $u$. 
\end{proof}

\section{Higher regularity of the solution}
For the rest of the paper we assume that $\psi \in C^{1,\beta}_{\loc}(\Omega)$ for some $\beta \in (0,1)$. Then we note that  for any $ \Omega' \Subset \Omega,$ there exists a constant $ [\nabla\psi]_{\beta}>0$ such that 
$$ |\nabla \psi (x) - \nabla\psi(y)| \leq [\nabla\psi]_{\beta}|x-y|^{\beta} \ \ \text{for all $x, y \in \Omega'$.}$$

In the previous section, we proved the first result in Theorem~\ref{thm:C^alpha} with cubes because of the simple proof based on the previous results, especially, supremum and infimum estimates with cubes. However, from now on we use balls instead of cubes in order to prove our second result in Theorem~\ref{thm:C^1,alpha}. 
It can be also proved with cubes by the same approach we use here, but seems to be more complicated because in the process of its proof it has to mix cubes and balls when dealing with Calder\'on--Zygmund type estimates.

\subsection{Higher integrability of the gradient} We start this section by introducing some higher integrability results of $\nabla u$.

\begin{lem} [Higher Integrability, Theorem~1.1 \cite{HarHK17}]
Let $\phi \in \Phi_w(\Omega)$ satisfy  \azero{}, \aone{}, \ainc{p} and \adec{q} with constant $L \ge 1$ and $1<p\le q <\infty$. If $u \in W^{1,\phi}_{\loc}(\Omega)$ is a  local minimizer of  the $\phi$-energy \eqref{mainfnal}
%$$ \int_{\Omega} \phi(x,|\nabla u|)\,dx,$$
then there exist $\sigma_0 = \sigma_0(n,p,q, L)>0$ and $c_1=c_1(n,p,q,L)\geq1$ such that 
\begin{equation}\label{ineq: higher integra_0}
\left( \fint_{B_r} \phi(x,|\nabla u|)^{1+\sigma_0} \,dx \right)^{\frac{1}{1+\sigma_0}} \leq c_1 \left(  \fint_{B_{2r}} \phi(x,|\nabla u|) \,dx+1 \right)
\end{equation}
for any $B_{2r} \Subset \Omega$ with $\| \nabla u \|_{L^{\phi}(B_{2r})} \leq 1.$
\end{lem}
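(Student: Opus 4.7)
The plan is to derive a reverse Hölder inequality for the energy density $\phi(\cdot,|\nabla u|)$ and then invoke the classical Gehring lemma to obtain the self-improvement. I would start from a Caccioppoli-type estimate: for $\sigma < \rho \leq 2r$, pick a cutoff $\eta \in C_c^{\infty}(B_\rho)$ with $\eta \equiv 1$ on $B_\sigma$ and $|\nabla\eta|\lesssim (\rho-\sigma)^{-1}$, and test minimality against the admissible competitor $w := u - \eta(u - u_{B_{2r}})$. Using \ainc{p} to bound $\phi(x,(1-\eta)|\nabla u|)\lesssim (1-\eta)^{p}\phi(x,|\nabla u|)$ and the doubling from \adec{q} for the error term $\phi(x,|\nabla\eta|\,|u-u_{B_{2r}}|)$, followed by the standard hole-filling step and Giaquinta's iteration lemma, yields
\[
\fint_{B_r}\phi(x,|\nabla u|)\,dx \;\lesssim\; \fint_{B_{2r}}\phi\!\left(x,\tfrac{|u-u_{B_{2r}}|}{r}\right)dx + 1.
\]

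Next, I would apply the Sobolev--Poincaré inequality \eqref{SPineq} of Lemma~\ref{thm:SP} to the right-hand side. Choose $s$ with $1 \leq s < \min\{p,\,n/(n-1)\}$; the normalization $\|\nabla u\|_{L^\phi(B_{2r})}\leq 1$ together with \azero{} supplies the corresponding norm hypothesis for the $\Phi$-function $\phi^{1/s}$ (which still obeys \azero{}, \aone{}, \ainc{p/s}, \adec{q/s}), so that
\[
\fint_{B_{2r}}\phi\!\left(x,\beta\tfrac{|u-u_{B_{2r}}|}{r}\right)dx \;\lesssim\; \left(\fint_{B_{2r}}\phi(x,|\nabla u|)^{1/s}\,dx\right)^{s} + 1.
\]
Chaining this with the Caccioppoli estimate and absorbing the factor $\beta$ by doubling produces the reverse Hölder inequality $\fint_{B_r}\phi(x,|\nabla u|)\,dx \lesssim (\fint_{B_{2r}}\phi(x,|\nabla u|)^{1/s}\,dx)^{s} + 1$. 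Setting $g := \phi(\cdot,|\nabla u|)^{1/s}$ this reads $\fint_{B_r} g^{s}\lesssim (\fint_{B_{2r}} g)^{s}+1$, and the classical Gehring lemma provides $\sigma_0>0$ and $c_1\geq 1$ with $(\fint_{B_r} g^{s(1+\sigma_0)})^{1/(1+\sigma_0)} \leq c_1 \fint_{B_{2r}} g^{s} + c_1$, which rewritten in terms of $\phi$ is exactly \eqref{ineq: higher integra_0}.

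The main technical obstacle is tracking the $x$-dependence of $\phi$ through each of these steps. The condition \aone{} only guarantees uniform comparability of $\phi^{+}_{B_r}$ and $\phi^{-}_{B_r}$ on the window $[1,\,1/|B_r|]$, so at every invocation of \aone{} one must verify that the relevant values of $\phi(\cdot,|\nabla u|)$ and of $\phi(\cdot,|u-u_{B_{2r}}|/r)$ lie in (a scale-invariant version of) that window. The hypothesis $\|\nabla u\|_{L^\phi(B_{2r})}\leq 1$ is precisely what makes this true; without it the constants in the Sobolev--Poincaré step would blow up, and the statement would have to be given in a scaled form rather than directly as \eqref{ineq: higher integra_0}.
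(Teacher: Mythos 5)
The paper does not give its own proof of this lemma; it cites it verbatim from \cite{HarHK17}, where it is proved by exactly the route you describe (Caccioppoli estimate from minimality, Sobolev--Poincar\'e in the form of \eqref{SPineq}, then Gehring's lemma in the version with the additive constant as in Giusti). Your argument is sound and matches that reference, with one small slip: you must choose $s$ strictly greater than $1$, i.e.\ $1<s<\min\{p,\,n/(n-1)\}$, since $s=1$ gives no integrability gap between the two sides of the reverse H\"older inequality and Gehring's lemma then yields no improvement.
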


\begin{lem}[Reverse H\"older type inequality, Lemma~4.7 \cite{HasO_pp18}]%\label{lem:reverse}
Assume that $u \in W^{1,\phi}_{\loc}(\Omega)$ satisfies \eqref{ineq: higher integra_0} for some $B_{2r}\Subset \Omega$. For every $t \in (0,1]$ there exists $c = c(c_1,t,q)>0$ such that 
$$\left( \fint_{B_r} \phi(x,|\nabla u|)^{1+\sigma_0} \,dx \right)^{\frac{1}{1+\sigma_0}}  \leq c \bigg[ \bigg( \fint_{B_{2r}} \phi(x,|\nabla u|)^{t} \,dx\bigg)^{\frac1t}  +1 \bigg].$$
Moreover, if $\phi$ satisfies \azero{}, \aone{} and \adec{q} with $L \ge 1$ and $q>1$, and $\| \nabla u \|_{L^{\phi}(B_{2r})} \leq 1,$ then 
$$ \fint_{B_r} \phi(x,|\nabla u|) \,dx \leq \left( \fint_{B_r} \phi(x,|\nabla u|)^{1+\sigma_0} \,dx \right)^{\frac{1}{1+\sigma_0}}   \leq c  \left(\phi^{-}_{B_{2r}} \left( \fint_{B_{2r}} |\nabla u| \,dx\right)  +1 \right)$$
for some $c = c(c_1,q,L) \ge 1.$
\end{lem}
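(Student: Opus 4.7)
The plan is to set $g := \phi(\cdot, |\nabla u|)$, read the hypothesis \eqref{ineq: higher integra_0} as an $L^{1+\sigma_0}$--$L^1$ reverse H\"older bound on the specific pair $B_r \subset B_{2r}$, and deduce both asserted inequalities by Gehring-style self-improvement followed by a Jensen-type estimate for the minorant $\phi^{-}_{B_{2r}}$.

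For the first inequality the case $t=1$ is the hypothesis itself; for $t\in(0,1)$ I would proceed in three steps. \emph{Step 1:} upgrade \eqref{ineq: higher integra_0} to arbitrary concentric subballs $B_\rho \subset B_{\rho'} \subset B_{2r}$ by Vitali-covering $B_\rho$ with balls $B_R(y_i)$ of radius $R=(\rho'-\rho)/4$ and $y_i\in B_\rho$, whose doubles still sit inside $B_{\rho'}$; applying \eqref{ineq: higher integra_0} on each $B_R(y_i)\subset B_{2R}(y_i)$ and summing yields
\begin{equation*}
\left(\fint_{B_\rho} g^{1+\sigma_0}\,dx\right)^{1/(1+\sigma_0)} \le c\left(\frac{\rho'}{\rho'-\rho}\right)^n \fint_{B_{\rho'}} g\,dx + c.
\end{equation*}
\emph{Step 2:} interpolate by writing $g = g^{at}\cdot g^{b(1+\sigma_0)}$ with $a=\sigma_0/(1+\sigma_0-t)$, $b=(1-t)/(1+\sigma_0-t)$ (so $a+b=1$ and $at+b(1+\sigma_0)=1$); H\"older with exponents $1/a,\,1/b$ gives $\fint g \le (\fint g^t)^a(\fint g^{1+\sigma_0})^b$, and since $b(1+\sigma_0)<1$ Young's inequality with a small parameter $\varepsilon$ splits the product and produces
\begin{equation*}
\left(\fint_{B_\rho} g^{1+\sigma_0}\,dx\right)^{\!\frac{1}{1+\sigma_0}} \le \varepsilon \left(\fint_{B_{\rho'}} g^{1+\sigma_0}\,dx\right)^{\!\frac{1}{1+\sigma_0}} + \frac{C_\varepsilon}{(\rho'-\rho)^{N}}\left(\fint_{B_{\rho'}} g^t\,dx\right)^{\!1/t} + c
\end{equation*}
for a fixed $N=N(n,t,\sigma_0)$. \emph{Step 3:} apply the classical Giaquinta--Giusti iteration lemma to the map $\rho\mapsto(\fint_{B_\rho} g^{1+\sigma_0}\,dx)^{1/(1+\sigma_0)}$ on $[r,2r]$ with $\varepsilon$ small enough to absorb the first right-hand term; this delivers the first claimed bound with constant depending only on $c_1,t,q$.

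For the moreover statement, specialize the first part to $t:=1/q$, so that the task reduces to estimating $(\fint_{B_{2r}} g^{1/q}\,dx)^q$ by $\phi^{-}_{B_{2r}}(\fint_{B_{2r}}|\nabla u|\,dx)+1$. The decisive input is that \adec{q} makes $s\mapsto\phi^{-}_{B_{2r}}(s)/s^q$ almost decreasing, hence $s\mapsto\phi^{-}_{B_{2r}}(s)^{1/q}/s$ almost decreasing, so $\phi^{-}_{B_{2r}}(s)^{1/q}$ is quasi-concave; Jensen's inequality therefore gives, up to an absolute constant,
\begin{equation*}
\fint_{B_{2r}} \phi^{-}_{B_{2r}}(|\nabla u|)^{1/q}\,dx \;\lesssim\; \phi^{-}_{B_{2r}}\!\left(\fint_{B_{2r}}|\nabla u|\,dx\right)^{\!1/q}.
\end{equation*}
To pass from the integrand $\phi(x,|\nabla u|)^{1/q}$ to $\phi^{-}_{B_{2r}}(|\nabla u|)^{1/q}$, I would split $B_{2r}$ according to whether $\phi^{-}_{B_{2r}}(|\nabla u(x)|)$ lies in the good range $[1,1/|B_{2r}|]$ or not: on the good set \aone{} gives pointwise comparability up to a fixed constant, while on its complement \azero{} together with the normalization $\|\nabla u\|_{L^\phi(B_{2r})}\le1$ (so $\varrho_{L^\phi(B_{2r})}(|\nabla u|)\le1$) lets one absorb the residual into the additive $+1$. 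The leftmost inequality in the asserted chain is immediate from Jensen.

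The expected main obstacle is this last step: cleanly organising the case split on the set where \aone{} is vacuous while keeping the final constant uniform — depending only on the structural data $c_1,q,L$ — and independent of the radius $r$ or of the specific $\phi$. The Gehring-style self-improvement in the first part is by now routine; it is the interplay between \aone{}, \azero{} and the Luxemburg normalization in the second part that requires care.
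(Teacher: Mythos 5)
Your Step 1--3 self-improvement for the first inequality is the standard Gehring/Giaquinta--Giusti route and is sound; it is essentially the argument cited for the analogous obstacle version in this paper (via \cite[Remark~6.12]{Giusti}), so no issue there.

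The gap is in the ``moreover'' part, specifically in the order in which you combine Jensen with \aone{}. You propose to first replace the integrand $\phi(x,|\nabla u|)^{1/q}$ \emph{pointwise} by $\phi^-_{B_{2r}}(|\nabla u|)^{1/q}$ via a case split, and only afterwards apply Jensen to the quasi-concave $\phi^-_{B_{2r}}(\cdot)^{1/q}$. The low-range set $\{\phi^-_{B_{2r}}(|\nabla u|)<1\}$ is indeed absorbed into the constant using \azero{} and \adec{q}. But on the high-range set $E_3:=\{\phi^-_{B_{2r}}(|\nabla u|)>|B_{2r}|^{-1}\}$, \aone{} gives nothing: there is no pointwise control of $\phi(x,|\nabla u|)$ by $\phi^-_{B_{2r}}(|\nabla u|)$ there (for variable-exponent $\phi(x,t)=t^{p(x)}$ the ratio grows like $t^{p^+-p^-}$). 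The modular normalization $\varrho_{L^\phi(B_{2r})}(|\nabla u|)\le 1$ only yields $|E_3|\le|B_{2r}|$ and $\fint_{B_{2r}}\phi(x,|\nabla u|)\,dx\le|B_{2r}|^{-1}$, which by H\"older bounds $\bigl(\fint_{B_{2r}}\phi(x,|\nabla u|)^{1/q}\chi_{E_3}\,dx\bigr)^{q}$ only by $|B_{2r}|^{-1}$ --- a quantity that blows up as $r\to 0$ and cannot be ``absorbed into the additive $+1$''. So this case split, as written, does not close.

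The fix --- which is exactly what this paper does in the proof of its obstacle version, Lemma~\ref{lem:reverse} --- is to reverse the order. Use the \emph{trivial} pointwise bound $\phi(x,|\nabla u|)\le\phi^+_{B_{2r}}(|\nabla u|)$ (no case split needed), note that \adec{q} is inherited by $\phi^+_{B_{2r}}$ so $\phi^+_{B_{2r}}(\cdot)^{1/q}$ is \adec{1} and thus comparable to a concave function, and apply Jensen to obtain
\begin{equation*}
\Bigl(\fint_{B_{2r}}\phi(x,|\nabla u|)^{1/q}\,dx\Bigr)^{q}\lesssim\phi^+_{B_{2r}}\Bigl(\fint_{B_{2r}}|\nabla u|\,dx\Bigr).
\end{equation*}
Only now apply \aone{}, and only at the single scalar $s:=\fint_{B_{2r}}|\nabla u|\,dx$: a further Jensen step with $\varrho_{\phi^-_{B_{2r}}}(|\nabla u|)\le\varrho_{L^\phi(B_{2r})}(|\nabla u|)\le 1$ shows $\phi^-_{B_{2r}}(s)\le|B_{2r}|^{-1}$, so either $\phi^-_{B_{2r}}(s)\ge 1$ and \aone{} gives $\phi^+_{B_{2r}}(s)\lesssim\phi^-_{B_{2r}}(s)$, or $\phi^-_{B_{2r}}(s)<1$ and $\phi^+_{B_{2r}}(s)$ is bounded by a constant via \azero{} and \adec{q}. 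This moves the dichotomy from a pointwise statement (where it fails on $E_3$) to a single number (where it is clean), and the constant stays uniform in $r$ and $\phi$ as required.
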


The next Lemma generalizes previous results to the obstacle case. The proof is a standard combination of Caccioppoli inequality, Sobolev--Poincar\'e inequality and Gehring's lemma.

\begin{lem}\label{lem:higherineq}
Let $\phi \in \Phi_w(\Omega) \cap C^1([0,\infty))$ satisfy  \azero{}, \aone{}, \ainc{p} and \adec{q} with constant $L \ge 1$ and $1<p\le q <\infty$. 
Assume that  $u$ is a solution to the $\mathcal{K}^{\phi}_{\psi}(\Omega)$-obstacle problem and $B_{2r}\Subset \Omega$ with $r>0$ satisfying  $\rho_{L^\phi(B_{2r})}(|\nabla u|),\rho_{L^\phi(B_{2r})}(|\nabla \psi|)  \le 1$.
Then  there exists $\sigma_0 \in (0,1)$ such that  
$\phi(\cdot, |\nabla u|) \in L^{1+\sigma_0}(B_r)$, and for any $\sigma \in (0, \sigma_0]$,
\begin{align}
\label{eq:higher-ineq-obs}
\fint_{B_r} \phi(x, |\nabla u|)^{1+\sigma}\,dx \leq c_2 \left [ \left( \fint_{B_{2r}}  \phi(x, |\nabla u|)\,dx \right)^{1+\sigma} +   \fint_{B_{2r}}  \phi(x, |\nabla \psi|)^{1+\sigma}\,dx +1 \right ]
\end{align}
% $\| \nabla u \|_{L^{\phi}(B_{2r})} \leq 1.$ 
for some $c_2=c_2(n,p,q,L)>0.$
\end{lem}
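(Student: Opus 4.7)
The plan is to derive a Caccioppoli-type energy estimate for the obstacle problem solution $u$, combine it with the Sobolev-Poincar\'e inequality of Lemma~\ref{thm:SP} to produce a reverse H\"older-type inequality for $\phi(\cdot,|\nabla u|)$, and then apply Gehring's self-improving lemma to obtain the higher integrability exponent $\sigma_0$.

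For the Caccioppoli step, I would test \eqref{weakformobstacle} with admissible perturbations that capture the oscillation of $u-\psi$ around its mean. Fix concentric balls $B_\rho \subset B_R \subset B_{2r}$, a cutoff $\tau \in C_0^\infty(B_R)$ with $\tau \equiv 1$ on $B_\rho$ and $|\nabla \tau| \lesssim 1/(R-\rho)$, and set $k := (u-\psi)_{B_{2r}} \geq 0$. Both $\eta_+ := -\tau^q (u-\psi-k)_+$ and $\eta_- := \tau^q (u-\psi-k)_-$ are admissible in \eqref{weakformobstacle}: where these perturbations are nonzero, $u+\eta_\pm$ is a convex combination of $u$ and $\psi+k$, both of which exceed $\psi$ since $u \geq \psi$ and $k \geq 0$. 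Plugging each into \eqref{weakformobstacle}, using $\partial\phi(x,z)\cdot z \approx \phi(x,|z|)$, splitting the cross terms $|\partial\phi(x,\nabla u)||\nabla \psi|$ and $|\partial\phi(x,\nabla u)||\nabla \tau||u-\psi-k|$ through the generalized Young inequality (Lemma~\ref{lem:phi-estimates}(4), together with $\phi^*(x,\phi'(x,t)) \approx \phi(x,t)$) into a small multiple of $\phi(x,|\nabla u|)$ plus $\phi(x,|\nabla \psi|)$ and $\phi(x,|u-\psi-k|/(R-\rho))$ respectively, and absorbing the small $\phi(x,|\nabla u|)$ contribution on the left, I expect to reach
\[
\fint_{B_\rho} \phi(x,|\nabla u|)\,dx \lesssim \fint_{B_R} \phi\!\left(x, \tfrac{|u-\psi-k|}{R-\rho}\right)dx + \fint_{B_R} \phi(x,|\nabla \psi|)\,dx + 1.
\]

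Next, I would apply \eqref{SPineq} of Lemma~\ref{thm:SP} to $u-\psi \in W^{1,\phi}(B_{2r})$ with average $k$, combined with the doubling bound $\phi(x,|\nabla(u-\psi)|) \lesssim \phi(x,|\nabla u|) + \phi(x,|\nabla \psi|)$ coming from \adec{}, to control the oscillation integral by $\bigl(\fint_{B_{2r}} [\phi(x,|\nabla u|)+\phi(x,|\nabla \psi|)]^{1/s}\,dx\bigr)^s + 1$ for some $s > 1$ with $s < n/(n-1)$ and $s \leq p$. A Giaquinta-Modica style absorption over the radii $\rho, R$ removes the $1/(R-\rho)$-dependence and yields the reverse H\"older inequality
\[
\fint_{B_r} \phi(x,|\nabla u|)\,dx \lesssim \Big(\fint_{B_{2r}} \phi(x,|\nabla u|)^{1/s}\,dx\Big)^s + \fint_{B_{2r}} \phi(x,|\nabla \psi|)\,dx + 1.
\]
Gehring's self-improving lemma, applied with $g = \phi(\cdot,|\nabla u|)$ and $f = \phi(\cdot,|\nabla \psi|) + 1$, then supplies $\sigma_0 > 0$ and, for every $\sigma \in (0,\sigma_0]$, the estimate \eqref{eq:higher-ineq-obs} upon raising the Gehring output to the $(1+\sigma)$-th power and separating the $\nabla u$ and $\nabla \psi$ contributions.

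The hard part will be the Caccioppoli step: the admissible test function must be flexible enough to allow subtracting an arbitrary non-negative constant $k$ (so that Sobolev-Poincar\'e couples cleanly through the average $(u-\psi)_{B_{2r}}$), and the obstacle's gradient contribution has to be extracted through Young's inequality with the power-type $\kappa$-dependence of Lemma~\ref{lem:phi-estimates}(4) in order to respect the \adec{q} structure during absorption. Once the Caccioppoli is in hand, the Sobolev-Poincar\'e and Gehring steps are entirely routine.
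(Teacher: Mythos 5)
Your plan reproduces the paper's argument almost exactly: test \eqref{weakformobstacle} with a perturbation built from $u-\psi-k$ where $k=\overline{(u-\psi)}_{B_{2r}}\geq 0$, split the cross terms by Young's inequality via Lemma~\ref{lem:phi-estimates}(4)--(5), absorb, feed the result into Sobolev--Poincar\'e, and invoke Gehring. The one structural difference is cosmetic: you split the test function into $\eta_+$ and $\eta_-$ and verify admissibility of each separately, but since $\eta_+ + \eta_- = -\tau^q(u-\psi-k)$ is itself admissible (exactly the paper's choice, $\eta=\tau^q(\psi-\bar\psi_{B_{2r}}-u+\bar u_{B_{2r}})$), summing your two inequalities just recovers the single one the paper writes down directly, so the splitting is an unnecessary detour. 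Similarly, the Giaquinta--Modica hole-filling over varying radii $\rho,R$ is superfluous here: the paper works with the fixed pair $B_r\subset B_{2r}$ and the only absorption needed is of the $\kappa\,\phi(x,|\nabla u|)$ term coming from Young, handled by choosing $\kappa$ small. A further superficial variant is that you apply Sobolev--Poincar\'e to $u-\psi$ and split the gradient afterwards, whereas the paper treats the $|u-\bar u|$ and $|\psi-\bar\psi|$ oscillation integrals separately; both are sound and give the same reverse H\"older inequality for Gehring.
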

\begin{proof}
Let $\tau \in C^{\infty}_0(B_{2r})$ be a cut off function such that $ 0\leq \tau \leq 1, \tau \equiv 1 $ in $B_r$ and $|\nabla \tau| \leq \frac{c(n)}{r}.$
Since $\psi-\bar{\psi}_{B_{2r}} - u+\bar{u}_{B_{2r}} \geq \psi - u,$ we note that $\eta:= \tau^{q} ( \psi-\bar{\psi}_{B_{2r}} - u+\bar{u}_{B_{2r}}) \geq \psi-u$ in $B_{2r}$ where $q$ is given in \adec{q}.
Then we take $\eta$ as a test function  in \eqref{weakformobstacle} to have that
\begin{align*}
\int_{B_{2r}}  \partial \phi(x, \nabla u)  \cdot \nabla \eta \,dx \geq 0,
\end{align*}
which implies that
\begin{align*}
\int_{B_{2r}}  \left[ \partial \phi(x, \nabla u)  \cdot \nabla u \right] \tau^q  \,dx
&\leq \int_{B_{2r}}  \left[ \partial \phi(x, \nabla u)  \cdot \nabla \psi  \right] \tau^q  \,dx\\
& \quad+ q \int_{B_{2r}}  \left[\partial \phi(x, \nabla u)  \cdot \nabla \tau \right]\tau^{q-1} ( \psi-\bar{\psi}_{B_{2r}} - u+\bar{u}_{B_{2r}})  \,dx.
\end{align*}
Then applying Lemma \ref{est:Orlicz} (2) and  Lemma \ref{lem:phi-estimates} (4)--(5),
\begin{align*}
&\int_{B_{r}}  \phi(x,|\nabla u|)  \,dx\lesssim\int_{B_{2r}}  \left[ \partial \phi(x, \nabla u)  \cdot \nabla u \right] \tau^q  \,dx\\
& \leq \int_{B_{2r}}  \left[ \partial \phi(x, \nabla u)  \cdot \nabla \psi  \right] \tau^q  \,dx\\
& \quad+ q \int_{B_{2r}}  \left[\partial \phi(x, \nabla u) \cdot \nabla \tau \right]\tau^{q-1} ( \psi-\bar{\psi}_{B_{2r}} - u+\bar{u}_{B_{2r}})  \,dx\\
&\lesssim \kappa \int_{B_{2r}} \phi^*(x,  | \partial \phi(x, \nabla u)|    ) \,dx + c(\kappa) \int_{B_{2r}} \phi(x,  |\nabla \psi| \tau^q ) \,dx\\
&\quad + q \kappa \int_{B_{2r}} \phi^*(x,  | \partial \phi(x, \nabla u)| ) \,dx + q c(\kappa) \int_{B_{2r}} \phi(x, \tau^{q-1}  |\nabla \tau| |\psi-\bar{\psi}_{B_{2r}} - u+\bar{u}_{B_{2r}}|) \,dx\\
&\lesssim  \kappa \int_{B_{2r}} \phi(x, |\nabla u|) \,dx +\int_{B_{2r}} \phi(x,  |\nabla \psi|) \,dx\\
&\quad + \int_{B_{2r}} \phi\left(x, \frac{  |u-\bar{u}_{B_{2r}}|}{r}\right) \,dx+  \int_{B_{2r}} \phi\left(x, \frac{  |\psi-\bar{\psi}_{B_{2r}}|}{r}\right) \,dx
\end{align*}
for any $\kappa \in (0,1).$
Therefore,  by choosing $\kappa$ sufficiently small, we have 
\begin{align*}
\fint_{B_{r}}  \phi(x,|\nabla u|)  \,dx &\lesssim \fint_{B_{2r}} \phi\left(x, \frac{  |u-\bar{u}_{B_{2r}}|}{r}\right) \,dx \\
&\quad + \fint_{B_{2r}} \phi(x,  |\nabla \psi|) \,dx+  \fint_{B_{2r}} \phi\left(x, \frac{  |\psi-\bar{\psi}_{B_{2r}}|}{r}\right) \,dx.
\end{align*}

Here, Sobolev--Poincar\'e inequality \eqref{SPineq} yields that  for $1<s \leq p$,
\begin{align*}
\fint_{B_{2r}} \phi\left(x, \frac{  |u-\bar{u}_{B_{2r}}|}{r}\right) \,dx  \lesssim 
 \left( \fint_{B_{2r}} \phi\left(x, |\nabla u|\right)^{\frac1s} \,dx\right)^s + 1,
\end{align*}
because 
$$ \int_{B_{2r}} \phi\left(x, |\nabla u|\right)^{\frac1s} \,dx \leq \int_{B_{2r}} \phi\left(x, |\nabla u|\right) \,dx +1 \leq 2.$$
Similarly, we have that 
\begin{align*}
\fint_{B_{2r}} \phi\left(x, \frac{  |\psi-\bar{\psi}_{B_{2r}}|}{r}\right) \,dx  \lesssim 
 \left( \fint_{B_{2r}} \phi\left(x, |\nabla \psi|\right)^{\frac1s} \,dx\right)^s + 1 \leq  \fint_{B_{2r}} \phi\left(x, |\nabla \psi|\right) \,dx +1,
\end{align*}
because $$ \int_{B_{2r}} \phi\left(x, |\nabla \psi|\right)^{\frac1s} \,dx \leq \int_{B_{2r}} \phi\left(x, |\nabla \psi|\right) \,dx +1 \leq 2.$$

Hence, we conclude that 
 \begin{align*}
\fint_{B_{r}}  \phi(x,|\nabla u|)  \,dx &\lesssim    \left( \fint_{B_{2r}} \phi\left(x, |\nabla u|\right)^{\frac1s} \,dx\right)^s  + \fint_{B_{2r}} \phi(x,  |\nabla \psi|) \,dx +1.
\end{align*}
Since $\nabla \psi \in L^{\infty}_{\loc}(\Omega),$  by Gehring's lemma (see \cite[Theorem 6.6 and Corollary 6.1, pp. 203–204]{Giusti}), 
we obtain the desired estimates, when we denote the implicit constant by $c_2$.
\end{proof}

We can upgrade the higher integrability for the obstacle problem like in the case for minimizers. %We obtain the reverse H\"older type inequality \eqref{est:reverse-typse-inequality1} from \eqref{eq:higher-ineq-obs} by the same argument as in \cite[Remark~6.12, pp. 205]{Giusti}. 
%The second inequality \eqref{est:reverse-typse-inequality} can be proved by the same way as in the proof of Lemma~4.7 of \cite{HasO_pp18}.

\begin{lem}[Reverse H\"older type inequality]\label{lem:reverse}

Under the same hypotheses as in Lemma~\ref{lem:higherineq}, for every $t \in (0,1]$, there exists $c_t = c_t(c_2, t,q) >0$ such that 

\begin{align}
\label{est:reverse-typse-inequality1}
\begin{split}
&\left (\fint_{B_r} \phi(x, |\nabla u|)^{1+\sigma_0} \, dx\right )^{1/(1+\sigma_0)}\\
&\quad \leq c_t \left [ \left (\fint_{B_{2r}} \phi(x, |\nabla u|)^t \, dx \right )^{1/t} + \left( \fint_{B_{2r}} \phi(x, |\nabla \psi|)^{1+\sigma_0} \, dx\right )^{1/(1+\sigma_0)} + 1 \right ].
\end{split}
\end{align}
Additionally, if $\phi$ satisfies  \azero{}, \aone{}, \adec{q} with constant $L \ge 1$ and $q>1$, and $\|\nabla u\|_{L^\phi(B_{2r})} \leq 1$, then

\begin{align}
\label{est:reverse-typse-inequality}
\begin{split}
\fint_{B_r} \phi(x, |\nabla u|)\, dx 
&\leq \left (\fint_{B_r} \phi(x, |\nabla u|)^{1+\sigma_0} \, dx \right )^{1/(1+\sigma_0)} \\
&\leq c\left [\phi^-_{B_{2r}} \left ( \fint_{B_{2r}} |\nabla u| \, dx \right ) +\left( \fint_{B_{2r}} \phi(x, |\nabla \psi|)^{1+\sigma_0} \, dx\right)^{1/(1+\sigma_0)}+ 1 \right ],
\end{split}
\end{align}
where $c\geq 1$ depends on $c_2, q$ and $L$.
\end{lem}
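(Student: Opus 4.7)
The plan is to follow the approach of \cite[Lemma~4.7]{HasO_pp18}, adapting it to the obstacle setting where Lemma~\ref{lem:higherineq} carries the extra inhomogeneity $\fint \phi(\cdot,|\nabla\psi|)^{1+\sigma_0}$ on the right-hand side compared with the unconstrained case.

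For \eqref{est:reverse-typse-inequality1}, I would argue via Gehring-type self-improvement. Fix $t\in(0,1]$. Since Lemma~\ref{lem:higherineq} applies to any concentric pair $B_s \subset B_{2s} \subset B_{2r}$, the plan is to H\"older-interpolate $\fint_{B_{2s}}\phi(\cdot,|\nabla u|)$ between $L^t$ and $L^{1+\sigma_0}$ using exponent $\theta \in (0,1)$ with $\tfrac{\theta}{1+\sigma_0}+\tfrac{1-\theta}{t}=1$, then split with Young's inequality so that the $L^{1+\sigma_0}$ factor carries a small prefactor $\varepsilon$. Raising to the $(1+\sigma_0)$-th power and inserting into \eqref{eq:higher-ineq-obs} gives a hole-filling inequality relating $\fint_{B_s}\phi^{1+\sigma_0}$ to $\fint_{B_{2s}}\phi^{1+\sigma_0}$ with a small prefactor, plus the $L^t$ term, the obstacle contribution, and a constant. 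A standard iteration lemma, e.g.\ \cite[Lemma~6.1]{Giusti}, then absorbs the $L^{1+\sigma_0}$ term on the right and produces \eqref{est:reverse-typse-inequality1}.

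For \eqref{est:reverse-typse-inequality} I would first apply \eqref{est:reverse-typse-inequality1} with the small exponent $t=1/q$, reducing the problem to the Jensen-type bound
\[
\left(\fint_{B_{2r}}\phi(x,|\nabla u|)^{1/q}\,dx\right)^{\!q}
\lesssim \phi^-_{B_{2r}}\!\left(\fint_{B_{2r}}|\nabla u|\,dx\right)+1.
\]
To prove this, I would first eliminate the $x$-dependence on the integrand: on the good set $G=\{x\in B_{2r}:\phi^-_{B_{2r}}(|\nabla u(x)|)\in[1,1/|B_{2r}|]\}$ the condition \aone{} gives $\phi(x,|\nabla u|)\lesssim \phi^-_{B_{2r}}(|\nabla u|)$, while on $B_{2r}\setminus G$ the combination of \azero{} and the modular normalization $\rho_{L^\phi(B_{2r})}(|\nabla u|)\le 1$ ensures that the contribution is bounded by a universal constant. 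Once the integrand has been replaced by $\phi^-_{B_{2r}}(|\nabla u|)^{1/q}$ up to this constant error, Jensen's inequality applied to the almost concave function $s\mapsto \phi^-_{B_{2r}}(s)^{1/q}$, whose concavity modulo constants follows from \adec{q} via Lemma~\ref{lem:phi-estimates}~(2), yields the bound in terms of $\phi^-_{B_{2r}}(\fint |\nabla u|)$. Raising to the $q$-th power produces the display above.

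The main obstacle will be this last Jensen step: the $x$-dependence of $\phi(x,|\nabla u|)^{1/q}$ must be pulled out onto the fixed radial profile $\phi^-_{B_{2r}}$, and the ''bad set'' on which \aone{} does not directly apply must contribute only a harmless additive constant. This is precisely where the triple of extra structural assumptions \azero{}, \aone{}, \adec{q} in \eqref{est:reverse-typse-inequality} are used, together with the normalization $\|\nabla u\|_{L^\phi(B_{2r})}\le 1$; the obstacle enters only through the additive term $\left(\fint_{B_{2r}}\phi(x,|\nabla\psi|)^{1+\sigma_0}\right)^{1/(1+\sigma_0)}$, which is already separated from the gradient integrand in \eqref{est:reverse-typse-inequality1} and simply passes through unchanged.
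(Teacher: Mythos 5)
Your treatment of \eqref{est:reverse-typse-inequality1} (hole-filling / Gehring self-improvement of the $L^1$ right-hand side to an $L^t$ right-hand side for $t\in(0,1]$) matches the paper, which cites precisely this argument from Giusti (Remark~6.12) with the obstacle term riding along passively.

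For \eqref{est:reverse-typse-inequality}, however, your pointwise decomposition of $B_{2r}$ into the good set $G$ and the complement has a genuine gap. On the low part of $B_{2r}\setminus G$ (where $\phi^-_{B_{2r}}(|\nabla u|)<1$) the bound by a constant via \azero{} and \adec{q} is fine, but on the high part $E=\{\phi^-_{B_{2r}}(|\nabla u|)>1/|B_{2r}|\}$ the condition \aone{} gives no control of $\phi(x,|\nabla u|)$ in terms of $\phi^-_{B_{2r}}(|\nabla u|)$, and the modular normalization $\varrho_{L^\phi(B_{2r})}(|\nabla u|)\le 1$ only yields $\fint_{B_{2r}}\phi(x,|\nabla u|)\,dx\le |B_{2r}|^{-1}$ and $|E|<|B_{2r}|$. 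Feeding these into H\"older gives $\bigl(\fint_{B_{2r}}\phi(x,|\nabla u|)^{1/q}\chi_E\bigr)^q\lesssim |B_{2r}|^{-1}$, which blows up as $r\to 0$ and is certainly not a universal constant. So the claim ``the contribution is bounded by a universal constant'' is not justified for $E$; you would have to show instead that this contribution is absorbed by the $\phi^-_{B_{2r}}\bigl(\fint|\nabla u|\bigr)$ term, which is not clear from your decomposition.

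The paper avoids this issue entirely by reversing the order of the two main steps. Instead of applying \aone{} pointwise, one first uses the trivial pointwise bound $\phi(x,t)\le\phi^+_{B_{2r}}(t)$ (no hypothesis needed), so that after choosing $t=1/q$ in \eqref{est:reverse-typse-inequality1} the gradient term is $\bigl(\fint_{B_{2r}}\phi^+_{B_{2r}}(|\nabla u|)^{1/q}\,dx\bigr)^q$. Since \adec{q} makes $(\phi^+_{B_{2r}})^{1/q}$ satisfy \adec{1}, Lemma~\ref{lem:phi-estimates}(2) gives a concave equivalent and Jensen yields the bound $\phi^+_{B_{2r}}\bigl(\fint_{B_{2r}}|\nabla u|\,dx\bigr)$. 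Only now is \aone{} invoked, and only at the single averaged value $s_0:=\fint_{B_{2r}}|\nabla u|\,dx$: by Jensen $\phi^-_{B_{2r}}(s_0)\le\fint_{B_{2r}}\phi^-_{B_{2r}}(|\nabla u|)\le\fint_{B_{2r}}\phi(x,|\nabla u|)\le |B_{2r}|^{-1}$, so if $\phi^-_{B_{2r}}(s_0)\ge 1$ then \aone{} applies at $s_0$ and gives $\phi^+_{B_{2r}}(s_0)\lesssim\phi^-_{B_{2r}}(s_0)$, while if $\phi^-_{B_{2r}}(s_0)<1$ then \azero{} and \adec{q} bound $\phi^+_{B_{2r}}(s_0)$ by a constant. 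This single-value use of \aone{} is exactly what makes the upper-bound restriction $\le|B_{2r}|^{-1}$ in \aone{} verifiable, and is the step your pointwise decomposition cannot reproduce on the set $E$.
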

\begin{proof}
The first inequality \eqref{est:reverse-typse-inequality1} follows from \eqref{eq:higher-ineq-obs} by the same argument as in \cite[Remark~6.12, pp. 205]{Giusti}. 

The second inequality \eqref{est:reverse-typse-inequality} can be derived by the same way as in the proof of Lemma~4.7 of \cite{HasO_pp18}, We choose $t= 1/q$ in the inequality \eqref{est:reverse-typse-inequality1} to obtain
\begin{align*}
&\left (\fint_{B_r} \phi(x, |\nabla u|)^{1+\sigma_0} \, dx \right )^{1/(1+\sigma_0)}\\
& \lesssim \left (\fint_{B_{2r}} \phi^{+}_{B_{2r}} (|\nabla u|)^{1/q} \, dx \right )^{q} + \left( \fint_{B_{2r}} \phi(x, |\nabla \psi|)^{1+\sigma_0} \, dx\right )^{1/(1+\sigma_0)} + 1.
\end{align*}
As $\phi$ satisfies \adec{q}, we immediately see that $\phi^{+}(t)^{1/q}$ satisfies \adec{1}. Thus using Lemma \ref{lem:phi-estimates} (2) and Jensen's inequality for concave functions we arrive at
\begin{align}
\label{eq:used-concave}
\begin{split}
&\left (\fint_{B_r} \phi(x, |\nabla u|)^{1+\sigma_0} \, dx \right )^{1/(1+\sigma_0)} \\%&\lesssim \left (\fint_{B_{2r}} \phi^{+}_{B_{2r}} (|\nabla u|)^{1/q}\, dx \right )^{q} + \fint_{B_{2r}} \phi(x, |\nabla \psi|)^{1+\sigma_0} \, dx + 1 \\
&\lesssim \phi^{+}_{B_{2r}} \left (\fint_{B_{2r}} |\nabla u| \, dx \right ) + \left( \fint_{B_{2r}} \phi(x, |\nabla \psi|)^{1+\sigma_0} \, dx\right )^{1/(1+\sigma_0)}  + 1.
\end{split}
\end{align}
Now we want to use \aone{}, for which we need that 
\begin{align*}
\phi^{-}_{B_{2r}}\left (\fint_{B_{2r}} |\nabla u| \, dx \right ) \leq \dfrac{1}{|B_{2r}|},
\end{align*}
but this follows from Jensen's inequality since $\varrho_{\phi^-_{B_{2r}}}(|\nabla u|) \leq \varrho_{\phi}(|\nabla u|) \leq 1$ by assumption that $\|\nabla u\|_{L^\phi(B_{2r})} \leq 1$. Then if $\phi^{-}_{B_{2r}}\left (\fint_{B_{2r}} |\nabla u| \, dx \right )\geq 1$, \aone{}  yields that
\begin{align*}
\phi^{+}_{B_{2r}}\left (\fint_{B_{2r}} |\nabla u| \, dx \right ) \lesssim \phi^{-}_{B_{2r}}\left (\fint_{B_{2r}} |\nabla u| \, dx \right ).
\end{align*}
In the other case, that is $\phi^{-}_{B_{2r}}\left (\fint_{B_{2r}} |\nabla u| \, dx \right )< 1$ the constant term dominates in \eqref{est:reverse-typse-inequality}. 
Then $\phi^{+}_{B_{2r}}\left (\fint_{B_{2r}} |\nabla u| \, dx \right )$ can be estimated by the constant $c$ with the help of \adec{} and  \azero{}.
\end{proof}

%\subsection{Comparison estimates}
\subsection{Comparison functions}
Now we are ready to prove some essential estimates for obtaining higher regularity. The first step is to construct a suitable reference problem introducing a regularized Orlicz function $\tilde \phi$. We use the function $\tilde \phi$ for Orlicz type equations, which are known to have solutions with $C^{1,\alpha}_\loc$-regularity.
%
%{\color{blue}
%Assume that $\phi \in \Phi_w(\Omega) \cap C^1([0,\infty))$  satisfies \aones{} with $\omega$ and $L \geq1$, and $\phi'$ satisfies \azero{} with the same constant $L \geq 1$, \inc{p-1} and \dec{q-1} for some $1<p\leq q$.
%}

%\marginpar{A:$C^2$ is too much? ok! }
For the rest of the paper, we assume that $\phi \in \Phi_c(\Omega) \cap C^1([0,\infty)) $ satisfies  \wvaone{}   with $\phi'$ satisfying  \inc{p-1} and \dec{q-1} for some $1<p\leq q$.

From  \wvaone{},
we note that $\phi$ satisfies a stronger version of \aone{} i.e., there exist $L \geq 1 $ and a non-decreasing, bounded, continuous function $\omega: [0, \infty) \rightarrow  [0,1]$ with $\omega(0)=0$ such that for any small ball $B_r \Subset \Omega$
\[
\phi^{+}_{B_r}(t) \le L \phi^{-}_{B_r} (t) 
\quad\text{for all}\quad t >0  \quad\text{with}\quad
\phi^{-}_{B_r}(t) \in \bigg[\omega(r), \frac{1}{|B_r|}\bigg].
\]
%Then we note that \wvaone{} implies this condition with $L=3$ and $\omega=\omega_\epsilon$ for any fixed $\epsilon$. 
We further assume that $\phi'$ satisfies \azero{} with the same constant $L\ge 1$,  \inc{p-1} and \dec{q-1} for some $1<p\leq q$.

In order to use higher integrability results we need to assume that $|\nabla u|$ and $|\nabla \psi|$ are small in sense of norms. Recall that we assume $\psi$ to have continuous gradient, so the gradient has higher integrability over a compact set. These are achieved by considering as the integration domain a small enough ball. To quantify this smallness we henceforth fix  $\Omega' \Subset \Omega$ and consider $B_{2r}=B(x_0,2r) \subset \Omega'$  with $r>0$ so that 

\begin{align}
\label{ass:rsmall}
\begin{split}
&r \leq \dfrac{1}{2},\ \  \omega(2r) \leq \frac1L \ \  \textrm{ and }\\
& |B_{2r}| \leq \min \bigg  \lbrace \frac{1}{2L}, 2^{-\frac{2(1+\sigma_0)}{\sigma_0}} \left (\int_{\Omega'} \phi(x, |\nabla u|)^{1+\sigma_0} \; dx \right )^{-\frac{2+\sigma_0}{\sigma_0}},2^{-\frac{2(1+\sigma_0)}{\sigma_0}} \left (\int_{\Omega'} \phi(x, |\nabla \psi|)^{1+\sigma_0} \; dx \right )^{-\frac{2+\sigma_0}{\sigma_0}} \bigg \rbrace,
\end{split}
\end{align}
where $\sigma_0 \in (0,1)$ is given in Lemma~\ref{lem:higherineq}. %\eqref{ineq: higher integra_0}. 
Hence we have from the above assumption \eqref{ass:rsmall} that 
\begin{align}
\label{eq:highernormsmall}
\begin{split}
&\int_{B_{2r}} \phi(x, |\nabla u|) \, dx \leq \int_{B_{2r}} \phi(x, |\nabla u|)^{1+\frac{\sigma_0}2} +1 \, dx \\
& \leq |B_{2r}|\left (\fint_{B_{2r}} \phi(x,|\nabla u|)^{1+\sigma_0} \, dx \right )^{\frac{2+\sigma_0}{2(1+\sigma_0)}} + |B_{2r}| \leq \dfrac{1}{2} + \dfrac{1}{2} =1,
\end{split}
\end{align}
and similarly,
\begin{align}
\label{eq:highernormsmall}
\begin{split}
&\int_{B_{2r}} \phi(x, |\nabla \psi|) \, dx \leq \int_{B_{2r}} \phi(x, |\nabla \psi|)^{1+\frac{\sigma_0}2} +1 \, dx \\
& \leq |B_{2r}|\left (\fint_{B_{2r}} \phi(x,|\nabla \psi|)^{1+\sigma_0} \, dx \right )^{\frac{2+\sigma_0}{2(1+\sigma_0)}} + |B_{2r}|   \leq \dfrac{1}{2} + \dfrac{1}{2} =1
\end{split}
\end{align}
by applying  H\"older's inequality. 
Then the hypotheses in Lemma~\ref{lem:higherineq} are satisfied and so it follows that for a solution $u$ to the $\mathcal{K}_{\psi}^{\phi}(\Omega)$-obstacle problem, $\phi(\cdot, |\nabla u|) \in L_{loc}^{1+\sigma_0}(\Omega)$ and \eqref{eq:higher-ineq-obs} holds.

%{\color{blue} Mi: I think that we do not need the blue term ${\color{blue} |B_{2r}|\fint_{B_{2r}}  \phi(x, |\nabla \psi|)^{1+\sigma}\,dx} $ because the boundedness follows from H\"older inequality and assumption \eqref{ass:rsmall}. The higher integrability \eqref{eq:higher-ineq-obs}  just implies that $\phi(\cdot, |\nabla u|) \in L_{loc}^{1+\sigma_0}(\Omega)$. A: I answered to this in the email. 
% Mi: Today, I asked about this one to Jihoon and got comments from him, because  (5.2)-(5.4) in \cite{HasO_pp18} are almost the same as ours. So I modified them. In this stage, we need to prove the smallness assumption of $u$ and $\psi$ of higher integrability, by assuming the size of $r$. Please check them and I will also check them again. A: I found the error in my thinking and agree that now it makes more sense. } 

For comparison argument, we shall use the regularized Orlicz  function $\tilde \phi$ constructed from a generalized Orlicz function $\phi$, which was introduced in \cite{HasO_pp18}.
We now give its definition and list its properties that will be used in our proof; see \cite[Section 5]{HasO_pp18} for 
its detailed construction and the proofs of the properties. 

We start by defining 
\begin{align}
\label{t1t2}
\phi^{\pm}(t):= \phi^{\pm}_{B_{2r}}(t), \quad
t_1 := (\phi^-)^{-1}(\omega(2r)) \quad \text{and} \quad t_2:=(\phi^-)^{-1}(|B_{2r}|^{-1}).
\end{align}
Note that $t_1 \leq 1\leq t_2$ from the assumptions \eqref{ass:rsmall} and \azero{}.
Now we can define
\begin{align}
\label{defn:phi_B}
\tau_{B_{2r}}(t):=
\begin{cases}
a_1\left (\frac{t}{t_1}\right )^{p-1}, &\text{if } 0 \leq t < t_1, \\
\phi'(x_0,t) &\text{if } t_1\leq t \leq t_2, \\
a_2\left (\frac{t}{t_2}\right )^{p-1}, &\text{if } t_2 < t < \infty,
\end{cases}
\end{align}
where $a_1 := \phi'(x_0,t_1)$ and $a_2 := \phi'(x_0,t_2)$ are chosen so that $\tau_{B_{2r}}$ is continuous. We continue defining
\begin{align*}
\phi_{B_{2r}}(t):= \int_{0}^t \tau_{B_{2r}}(s) \, ds.
\end{align*}
Finally, for $\eta \in C^{\infty}_0(\R)$ with $\eta \geq 0$, $\supp \eta \subset (0,1)$ and $\|\eta\|_1=1$, we define the regularized Orlicz function $\tilde \phi(0):=0$ and
\begin{align}
\label{defn:tilde-phi}
\tilde \phi(t) := \int_{0}^{\infty} \phi_{B_{2r}}(t\sigma) \eta_{r}(\sigma-1) \, d\sigma = \int_{0}^\infty \phi_{B_{2r}}(s) \eta_{rt}(s-t) \, ds, \text{ where } \eta_r(t):= \tfrac 1r \eta\left ( \tfrac tr \right ).
\end{align}

This function $\tilde \phi$ has the following properties. 
\begin{lem}[Lemma 5.10 \cite{HasO_pp18}]
\label{lem:tilde-phi}
%Let $\phi \in \Phi_w(\Omega) \cap C^1([0,\infty))$ satisfying (wMA) with $M=1$ be such that $\phi'$ satisfies (Inc)$_{p-1}$, (aDec)$_{q-1}$ and (A0) with constant $L \geq 1$.

 Let $\tilde \phi$ be the regularized Orlicz function. Then
\begin{enumerate}
\item{$\phi_{B_{2r}}(t) \leq \tilde{\phi}(t) \leq (1+cr) \phi_{B_{2r}}(t)$ for all $t>0$ with $c>0$ depending only on $q$, and $0 \leq \tilde \phi(t)-\phi(x_0,t) \leq c r \phi^-(t)+c \omega(2r) \leq c \phi^-(t)$ for all $t \in [t_1,t_2]$; }
\medskip
\item{$\tilde \phi \in C^1([0,\infty))$ and it satisfies \inc{p}, \dec{q} and \azero{}, and $\tilde \phi'$ satisfies \inc{p-1} and \dec{q-1} and  \azero{}. In particular $\tilde \phi'(t) \approx t \tilde \phi''(t)$ for all $t>0$;}
\medskip
\item{$\tilde \phi(t) \leq c \phi(x,t)$ for all $(x,t) \in B_{2r} \times [1,\infty)$, and so $\tilde \phi(t) \lesssim \phi(x,t)+1$ for all $(x,t) \in B_{2r} \times [0,\infty).$}
\end{enumerate}
Here the constant $c$ and the implicit constants depend only on $n,p,q$ and $L$.
\end{lem}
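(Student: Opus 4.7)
The statement is quoted from \cite{HasO_pp18}, but to sketch how one would verify it, the plan is to derive each of the three properties directly from the definitions \eqref{defn:phi_B} and \eqref{defn:tilde-phi}, systematically using the scaling conditions \inc{p}, \dec{q} on $\phi_{B_{2r}}$ and the spatial regularity \wvaone{} on $\phi$.

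For (1) I would work with the representation $\tilde\phi(t) = \int_0^\infty \phi_{B_{2r}}(t\sigma)\,\eta_r(\sigma-1)\,d\sigma$, where $\eta_r(\cdot-1)$ is a nonnegative mollifier supported in $(1,1+r)$ with mass one. Monotonicity of $\phi_{B_{2r}}$ immediately gives $\tilde\phi(t) \geq \phi_{B_{2r}}(t)$, while \dec{q} gives $\phi_{B_{2r}}(t\sigma) \leq \sigma^q \phi_{B_{2r}}(t) \leq (1+cr)\phi_{B_{2r}}(t)$ on the support of $\eta_r(\cdot-1)$, yielding the upper bound. The comparison with $\phi(x_0,t)$ on $[t_1,t_2]$ reduces by construction to the constant difference $\phi_{B_{2r}}(t_1) - \phi(x_0,t_1)$, which I would control using $t_1\phi'(x_0,t_1)\approx\phi(x_0,t_1)$ from Lemma \ref{est:Orlicz}(1), combined with the defining identity $\phi^-(t_1) = \omega(2r)$ and \wvaone{} to dominate $\phi(x_0,t_1) \leq \phi^+(t_1) \lesssim \omega(2r)$.

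For (2), I would differentiate under the integral to obtain $\tilde\phi'(t) = \int_0^\infty \tau_{B_{2r}}(t\sigma)\,\sigma\,\eta_r(\sigma-1)\,d\sigma$, which is continuous in $t$ since $\tau_{B_{2r}}$ is continuous by the choice of $a_1, a_2$; smoothness of $\eta_r$ then gives as much additional regularity as needed. The growth properties \inc{p}, \dec{q} of $\tilde\phi$ and \inc{p-1}, \dec{q-1} of $\tilde\phi'$ are inherited from the corresponding properties of $\phi_{B_{2r}}$ and $\tau_{B_{2r}}$, where the pure $t^{p-1}$ extensions on $[0,t_1]$ and $[t_2,\infty)$ preserve these exponents globally. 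Condition \azero{} comes from $\tilde\phi(1) \approx \phi_{B_{2r}}(1) \approx 1$ thanks to $t_1 \leq 1 \leq t_2$ and \azero{} for $\phi$. Finally, $\tilde\phi'(t) \approx t\tilde\phi''(t)$ follows from applying Lemma \ref{est:Orlicz}(1) to $\tilde\phi'$.

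For (3), I would bootstrap from (1): on $[1,t_2]$, $\phi^-_{B_{2r}}(t) \geq \phi^-_{B_{2r}}(1) \approx 1 \geq \omega(2r)$ by \eqref{ass:rsmall} and \azero{}, so \wvaone{} yields $\phi^+_{B_{2r}}(t) \lesssim \phi^-_{B_{2r}}(t) \lesssim \phi(x,t)$ uniformly for $x \in B_{2r}$, and (1) then gives $\tilde\phi(t) \lesssim \phi_{B_{2r}}(t) \lesssim \phi(x,t)$; on $[t_2,\infty)$ both functions grow at least like $t^p$ and match at $t_2$, so the comparison extends by \inc{p}. The additive $+1$ absorbs the regime $t \in [0,1]$ via \azero{}. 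The main technical obstacle throughout is the careful threshold analysis at $t_1$ and $t_2$: one must simultaneously preserve global \inc{p} and \dec{q}, the $C^1$-regularity of the glued function, and quantitative closeness to $\phi(x_0,t)$ in the middle region, and this is precisely why the constants $a_1, a_2$ in \eqref{defn:phi_B} are chosen to match the values $\phi'(x_0, t_i)$.
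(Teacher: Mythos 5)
The paper does not prove Lemma~\ref{lem:tilde-phi}; it simply quotes it as Lemma~5.10 of \cite{HasO_pp18} and defers to that reference. Your sketch correctly reconstructs the argument in the cited source: deriving each property directly from the gluing \eqref{defn:phi_B}, the mollification \eqref{defn:tilde-phi}, the scaling conditions \inc{p}, \dec{q}, and \wvaone{} in the range $[t_1,t_2]$, with the one small caveat that for the $C^\infty$ regularity in part~(2) one should pass to the second form $\tilde\phi(t)=\int_0^\infty\phi_{B_{2r}}(s)\eta_{rt}(s-t)\,ds$ (or change variables) so that the derivatives fall on the smooth mollifier rather than on $\tau_{B_{2r}}$.
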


%\subsection{Comparison functions} 

Now let us consider the following comparison principle for $ \tilde \phi$.
 \begin{lem}\label{lem:psi<w}
Assume that $w \in W^{1,\phi}(\Omega)$ satisfies 
 \begin{equation*}
  \left\{\begin{array}{rclcc} 
  -\mathrm{div} \left( \frac{\tilde{\phi}' (|\nabla \psi|)}{|\nabla \psi |} \nabla \psi \right) &  \leq  &  -\mathrm{div}  \left( \frac{\tilde{\phi}' (|\nabla w|)}{|\nabla w |} \nabla w \right)  & \text{ in } & B_{r},\\ \psi& \leq & w & \text{ on } & \partial B_{r}, 
  \end{array}\right.
   \end{equation*}
   in the weak sense, that is, $(\psi - w)_+ \in W^{1,\tilde{\phi}}(B_r)$ and  
   $$ \int_{B_r} \left(  \frac{\tilde{\phi}' (|\nabla \psi|)}{|\nabla \psi |} \nabla \psi  -  \frac{\tilde{\phi}' (|\nabla w|)}{|\nabla w |} \nabla w \right) \cdot \nabla \eta \, dx \leq 0\ \  \text{ for all $\eta \in W^{1,\tilde{\phi}} (B_r)$ with } \eta \geq 0.$$
   Then we have $\psi \leq w $ a.e. in $B_r.$
\end{lem}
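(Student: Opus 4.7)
The plan is to use the standard monotonicity-plus-boundary-trace argument for quasilinear elliptic comparison principles, adapted to the Orlicz setting. The natural test function is $\eta = (\psi - w)_+$. By hypothesis $(\psi-w)_+ \in W^{1,\tilde\phi}(B_r)$, and since $\psi \le w$ on $\partial B_r$, this function has vanishing trace on $\partial B_r$; it is therefore an admissible non-negative test function (by density, or by noting that zero extension outside $B_r$ keeps it in $W^{1,\tilde\phi}$ globally with compact support).

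Inserting $\eta = (\psi-w)_+$ into the weak formulation gives
\begin{equation*}
\int_{\{\psi>w\}\cap B_r} \left(  \frac{\tilde\phi'(|\nabla \psi|)}{|\nabla \psi|}\nabla \psi -  \frac{\tilde\phi'(|\nabla w|)}{|\nabla w|}\nabla w \right) \cdot (\nabla\psi - \nabla w)\, dx \le 0,
\end{equation*}
where we used that $\nabla(\psi-w)_+ = (\nabla\psi - \nabla w)\mathbf{1}_{\{\psi>w\}}$ almost everywhere. Now the key structural ingredient is Lemma~\ref{est:Orlicz}(2) applied to $\tilde\phi$: since $\tilde\phi \in C^1([0,\infty))$ and $\tilde\phi'$ satisfies \inc{p-1} and \dec{q-1} (Lemma~\ref{lem:tilde-phi}(2)), the integrand is pointwise comparable to
\begin{equation*}
\frac{\tilde\phi'(|\nabla \psi| + |\nabla w|)}{|\nabla \psi|+|\nabla w|}\, |\nabla\psi - \nabla w|^2 \ge 0.
\end{equation*}

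Combining the two displays, the non-negative integrand must vanish almost everywhere on $\{\psi>w\}\cap B_r$. Because $\tilde\phi'(t)/t > 0$ for $t>0$ (and $\tilde\phi'(0)=0$ accounts for the degenerate case where both gradients vanish), this forces $\nabla\psi = \nabla w$ a.e.\ on $\{\psi>w\}\cap B_r$. Consequently $\nabla(\psi-w)_+ = 0$ a.e.\ in $B_r$, so $(\psi-w)_+$ is (equivalent to) a constant on each connected component of $B_r$. Since its trace on $\partial B_r$ is zero, that constant must be $0$, yielding $\psi \le w$ a.e.\ in $B_r$.

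The main obstacle is essentially a bookkeeping one rather than a conceptual difficulty: justifying that $(\psi-w)_+$ is an admissible test function for a weak inequality formulated in $W^{1,\tilde\phi}(B_r)$ and that the chain rule $\nabla(\psi-w)_+ = (\nabla\psi-\nabla w)\mathbf{1}_{\{\psi>w\}}$ applies. Both follow from standard truncation/approximation in Orlicz--Sobolev spaces using that $\tilde\phi$ satisfies \adec{q} (hence is doubling, by Lemma~\ref{lem:tilde-phi}(2)), so the reflexive Sobolev theory and density of smooth functions apply exactly as in the classical $p$-Laplacian case. Once these are in place, the monotonicity of the vector field $t \mapsto \tilde\phi'(|t|)t/|t|$, which is the real content of Lemma~\ref{est:Orlicz}(2), closes the argument.
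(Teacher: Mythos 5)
Your proof is correct and follows essentially the same route as the paper: test with $\eta = (\psi-w)_+$, invoke the monotonicity inequality Lemma~\ref{est:Orlicz}(2) for $\tilde\phi$, and conclude $\nabla(\psi-w)_+ = 0$. The only (immaterial) difference is at the final step: you observe directly that the monotonicity quantity is pointwise nonnegative and therefore must vanish a.e., whereas the paper reaches the same conclusion by an extra Young-type splitting via Lemma~\ref{est:Orlicz}(4) with a small parameter $\kappa$ and then sends $\kappa \to 0$; your version is in fact a touch more economical.
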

\begin{proof}
By taking $\eta = (\psi - w)_+$ as a test function to the above weak formulation,  
we see 
$$  \int_{B_r\cap \{\psi > w \}} \left(  \frac{\tilde{\phi}' (|\nabla \psi|)}{|\nabla \psi |} \nabla \psi  -  \frac{\tilde{\phi}' (|\nabla w|)}{|\nabla w |} \nabla w \right) \cdot  (\nabla \psi - \nabla w) \, dx \leq 0.$$
Then using first (4) and then (2) of Lemma \ref{est:Orlicz}, we obtain
\begin{align*}
& \int_{B_r \cap \{\psi > w \}} \tilde{\phi}( |\nabla \psi - \nabla w |)\,dx \\
 &  \lesssim  \kappa   \int_{B_r \cap \{\psi > w \}} \tilde{\phi}( |\nabla \psi  |) + \tilde{\phi}( | \nabla w |)  \,dx \\
 &\quad + \kappa^{-1} \int_{B_r \cap \{\psi > w \}} \frac{\tilde{\phi}'( |\nabla \psi |+| \nabla w |)}{|\nabla \psi |+| \nabla w |} |\nabla \psi - \nabla w|^2 \,dx   \\
 &\lesssim  \kappa   \int_{B_r \cap \{\psi > w \}} \tilde{\phi}( |\nabla \psi  |) + \tilde{\phi}( | \nabla w |)  \,dx \\
 &\quad + \kappa^{-1} \int_{B_r \cap \{\psi > w \}} \left ( \dfrac{\tilde \phi'(|\nabla \psi|)}{|\nabla \psi|} \nabla \psi - \dfrac{\tilde \phi'(\nabla w|)}{|\nabla w|}\nabla w \right ) \cdot (\nabla \psi - \nabla w) \,dx   \\
 &  \leq \kappa   \int_{B_r \cap \{\psi > w \}} \tilde{\phi}( |\nabla \psi  |) + \tilde{\phi}( | \nabla w |)  \,dx
 \end{align*}
 for any $\kappa \in (0,1).$ Since $\kappa$ is arbitrary, we have that $\psi \leq w $ a.e. in $B_r.$
\end{proof}

Next we define two equations and corresponding solutions $w$ and $v$ to which we compare our 
solution $u$ to the obstacle problem \eqref{weakformobstacle}. We will prove some energy estimates of $w$ and $v$ with respect to $u$ and regularized Orlicz function $\tilde \phi$ in Lemma \ref{lem:wv<u}.

Let $u \in \mathcal{K}^{\phi}_{\psi}(\Omega)$ be a solution to the $\mathcal{K}^{\phi}_{\psi}(\Omega)$-obstacle problem and $B_{2r}\subset \Omega$ with $r>0$ satisfying \eqref{ass:rsmall}. %Recall the regularized Orlicz function $\tilde{\phi}: [0,\infty) \rightarrow [0,\infty)$ which was defined in \eqref{defn:tilde-phi}. 
%We consider the minimizer  $w \in W^{1,\tilde{\phi}}(B_r)$ of 
%$$ \int_{B_r} \tilde{\phi}(|\nabla w|) -  \tilde{\phi}(|\nabla \psi |) \,dx \ \ \text{ with $w=u$ on } \partial B_{r}$$
%and the minimizer $v \in W^{1,\tilde{\phi}}(B_r)$ of 
%$$ \int_{B_r} \tilde{\phi}(|\nabla v|) \,dx \ \ \text{ with $v=w$ on } \partial B_{r}. $$
Let us consider the unique weak solution $w \in W^{1,\tilde{\phi}}(B_r)$  of 
%Note that $w \in W^{1,\tilde{\phi}}(B_r)$ is the unique weak solution of 
 \begin{equation}\label{eq:wpsi}
  \left\{\begin{array}{rclcc} 
  -\mathrm{div} \left( \frac{\tilde{\phi}' (|\nabla w|)}{|\nabla w |} \nabla w \right) &  =  &  -\mathrm{div}\left( \frac{\tilde{\phi}' (|\nabla \psi|)}{|\nabla \psi |} \nabla \psi \right)  & \text{ in } & B_{r},\\ w& = & u & \text{ on } & \partial B_{r},
  \end{array}\right.
   \end{equation}
   and % $v \in W^{1,\tilde{\phi}}(B_r)$ is the unique weak solution of 
  the unique weak solution  $v \in W^{1,\tilde{\phi}}(B_r)$  of 
 \begin{equation}\label{eq:vw}
  \left\{\begin{array}{rclcc} 
  -\mathrm{div}\left( \frac{\tilde{\phi}' (|\nabla v|)}{|\nabla v |} \nabla v \right) &  =  &  0 & \text{ in } & B_{r},\\ v& = & w& \text{ on } & \partial B_{r}.
  \end{array}\right.
   \end{equation} 
From now on, we set $\overline{\nabla \psi}: = \sup_{\Omega}|\nabla \psi|$ for simplicity.

\begin{lem}\label{lem:wv<u}
For $w$ and $v$ defined in \eqref{eq:wpsi} and \eqref{eq:vw} we have the energy estimates:
\begin{align}
\label{est:energy}
 \fint_{B_r} \tilde{\phi}(|\nabla w|) \,dx \le c \left(  \fint_{B_r} \tilde{\phi}(|\nabla u|) \,dx +  \fint_{B_r} \tilde{\phi}(|\nabla \psi|) \,dx \right) \leq c \left( \fint_{B_r} \tilde{\phi}(|\nabla u|) \,dx  +1\right)
\end{align}
and
\begin{align}
\label{est:energy2}
  \fint_{B_r} \tilde{\phi}(|\nabla v|) \,dx \le    c  \fint_{B_r} \tilde{\phi}(|\nabla w|) \,dx \leq  c \left( \fint_{B_r} \tilde{\phi}(|\nabla u|) \,dx  +1\right),
\end{align}
where $c = c(n, p,q,L,\overline{\nabla \psi})>0.$
\end{lem}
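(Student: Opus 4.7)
The plan is to test the weak forms of \eqref{eq:wpsi} and \eqref{eq:vw} with the natural Dirichlet-vanishing differences $\eta=w-u$ and $\eta=v-w$, and then to control the resulting mixed terms via Young's inequality for $\tilde\phi$, exactly as in the standard comparison-function arguments for the Orlicz Laplacian.

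First I would verify admissibility of the test functions. Since Lemma~\ref{lem:tilde-phi}(3) gives $\tilde\phi(t)\lesssim\phi(x,t)+1$ on $B_{2r}\times[0,\infty)$, the hypothesis $u\in W^{1,\phi}(\Omega)$ yields $u\in W^{1,\tilde\phi}(B_r)$, so together with the boundary conditions $w=u$ and $v=w$ on $\partial B_r$ both $w-u$ and $v-w$ lie in $W^{1,\tilde\phi}_0(B_r)$. Next I note that Lemma~\ref{lem:tilde-phi}(2) places $\tilde\phi$ in $\Phi_c\cap C^1$ with $\tilde\phi'$ satisfying \inc{p-1} and \dec{q-1}, so Lemma~\ref{est:Orlicz}(1) gives $t\tilde\phi'(t)\approx\tilde\phi(t)$ while Lemma~\ref{lem:phi-estimates}(4)--(5) supplies a Young inequality for $\tilde\phi$ together with the identity $\tilde\phi^\ast(\tilde\phi'(t))\approx\tilde\phi(t)$.

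For \eqref{est:energy}, testing \eqref{eq:wpsi} with $\eta=w-u$ and rearranging gives
\[
\int_{B_r}\tilde\phi'(|\nabla w|)\tfrac{\nabla w}{|\nabla w|}\cdot\nabla w\,dx = \int_{B_r}\tilde\phi'(|\nabla w|)\tfrac{\nabla w}{|\nabla w|}\cdot\nabla u\,dx + \int_{B_r}\tilde\phi'(|\nabla\psi|)\tfrac{\nabla\psi}{|\nabla\psi|}\cdot\nabla(w-u)\,dx.
\]
The left-hand side is $\approx\int_{B_r}\tilde\phi(|\nabla w|)\,dx$. I would bound each of the three scalar products on the right by Young's inequality as $\kappa\,\tilde\phi(|\nabla w|)+c_\kappa\bigl[\tilde\phi(|\nabla u|)+\tilde\phi(|\nabla\psi|)\bigr]$; choosing $\kappa$ small enough to absorb into the left produces the first inequality of \eqref{est:energy}. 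For the second inequality I use $|\nabla\psi|\le\overline{\nabla\psi}$ together with \azero{} and \adec{q} for $\phi$, and Lemma~\ref{lem:tilde-phi}(1),(3), to bound $\tilde\phi(\overline{\nabla\psi})$ by a constant depending only on $n,p,q,L,\overline{\nabla\psi}$ (and, crucially, not on the particular ball $B_{2r}$ used to build $\tilde\phi$).

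For $v$ I test \eqref{eq:vw} with $\eta=v-w$ to get
\[
\int_{B_r}\tilde\phi'(|\nabla v|)\tfrac{\nabla v}{|\nabla v|}\cdot\nabla v\,dx=\int_{B_r}\tilde\phi'(|\nabla v|)\tfrac{\nabla v}{|\nabla v|}\cdot\nabla w\,dx;
\]
one application of the same Young inequality and absorption yields $\fint_{B_r}\tilde\phi(|\nabla v|)\,dx\lesssim\fint_{B_r}\tilde\phi(|\nabla w|)\,dx$, and concatenating with \eqref{est:energy} delivers \eqref{est:energy2}. The argument is essentially a routine monotone-operator energy estimate once the structural inequalities of Lemma~\ref{est:Orlicz} and Lemma~\ref{lem:phi-estimates} are available for $\tilde\phi$; the only mildly delicate point I expect is verifying that the $\tilde\phi(\overline{\nabla\psi})$ contribution is controlled by a constant independent of $B_{2r}$, so that the final dependence of $c$ reduces to $n,p,q,L,\overline{\nabla\psi}$ as in the statement.
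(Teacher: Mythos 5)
Your argument is correct and, for \eqref{est:energy}, it matches the paper's proof step for step: test \eqref{eq:wpsi} with $w-u\in W^{1,\tilde\phi}_0(B_r)$, apply $t\tilde\phi'(t)\approx\tilde\phi(t)$ (Lemma~\ref{est:Orlicz}(1)) together with the Young-type estimates of Lemma~\ref{lem:phi-estimates}(4)--(5) to the two mixed terms, absorb the small $\tilde\phi(|\nabla w|)$ contribution, and conclude by bounding $\tilde\phi(|\nabla\psi|)$ by $\tilde\phi(\overline{\nabla\psi})$. The delicate point you flag, namely that $\tilde\phi(\overline{\nabla\psi})$ must be controlled by a constant independent of the particular ball $B_{2r}$ used to construct $\tilde\phi$, is indeed handled by Lemma~\ref{lem:tilde-phi}(2): $\tilde\phi$ satisfies \azero{} and \dec{q} with constants depending only on $n,p,q,L$, so $\tilde\phi(\overline{\nabla\psi})$ is bounded uniformly over admissible balls by a quantity of the form $c(L,q)\max\{1,\overline{\nabla\psi}^{\,q}\}$. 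For \eqref{est:energy2} your route (testing \eqref{eq:vw} with $v-w$ and running Young's inequality once more) is valid, whereas the paper simply invokes the fact that $v$, being the $\tilde\phi$-harmonic function with boundary value $w$, minimizes $\int_{B_r}\tilde\phi(|\nabla\cdot|)\,dx$ among such competitors, so that $\int_{B_r}\tilde\phi(|\nabla v|)\,dx\le\int_{B_r}\tilde\phi(|\nabla w|)\,dx$ holds directly with constant~$1$; this is marginally shorter and sharper but rests on the same convexity you exploit.
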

%\marginpar{check what L this is}
\begin{proof}
Testing with $w-u \in W^{1,\tilde \phi}_0(B_r)$  to the weak
formulation, the equation \eqref{eq:wpsi} yields
\begin{align*}
\fint_{B_r} \dfrac{\tilde \phi'(|\nabla w|)}{|\nabla w|} \nabla w \cdot \nabla w\, dx = \fint_{B_r} \dfrac{\tilde \phi'(|\nabla w|)}{|\nabla w|} \nabla w \cdot \nabla u \, + \dfrac{\tilde \phi'(|\nabla \psi|)}{|\nabla \psi|}\nabla \psi \cdot \nabla (w-u) \, dx.
\end{align*}
Applying $\tilde \phi'(t) t \approx \tilde \phi(t)$ in Lemma~\ref{est:Orlicz} (1), Lemma \ref{lem:phi-estimates} (4)-(5), % generalized Young's inequality \eqref{eq:gen-young}
and \adec{q}, we obtain
\begin{align*}
\fint_{B_r} \dfrac{\tilde \phi'(|\nabla w|)}{|\nabla w|} \nabla w \cdot \nabla w\, dx &\leq c \fint_{B_r} \dfrac{\tilde \phi(|\nabla w|)}{|\nabla w|} |\nabla u| + \dfrac{\tilde \phi(|\nabla \psi|)}{|\nabla \psi|} (|\nabla w|+ |\nabla u|) \, dx \\
%& \leq \dfrac{1}{4}  \fint_{B_r} \tilde \phi(|\nabla w|) \, dx + c \fint_{B_r} \tilde \phi (|\nabla u|) + \tilde \phi(|\nabla \psi|) \, dx \\
%&\quad + \dfrac{1}{4c_d} \fint_{B_r} \tilde \phi(|\nabla w| + |\nabla u|) \, dx  \\
&\leq \dfrac{1}{2} \fint_{B_r} \tilde \phi(|\nabla w|) \, dx + c \fint_{B_r} \tilde \phi(|\nabla u|) + \tilde \phi(|\nabla \psi|) \, dx.
\end{align*}
Recalling Lemma~\ref{est:Orlicz} (1) again, we can move the first term on the right-hand side to the left-hand side. Finally, estimating $|\nabla \psi|$ by $\overline{\nabla \psi}$ we have proven \eqref{est:energy}. 

The second energy estimate \eqref{est:energy2} follows immediately since $v$ as a solution minimizes the corresponding energy integral and $v$ and $w$ have the same boundary values in the Sobolev sense.
\end{proof}

As $v$ is the solution of the $\phi$-Laplacian equation, it is known to have $C^{1,\alpha_0}$-regularity for some $\alpha_0 >0$ from \cite{Lie91} (see also \cite[Lemma 4.12]{HasO_pp18}). Additionally, for any $B(x_0,\rho) \subset B_r$, we have
\begin{align}
\label{v-Lip}
\sup_{B(x_0,\rho/2)} |\nabla v| \leq c \fint_{B(x_0,\rho)} |\nabla v| \, dx
\end{align}
and for any $\tau \in (0,1)$
\begin{align}
\label{nablavtau}
\fint_{B(x_0,\tau \rho)} |\nabla v - (\nabla v)_{B(x_0, \tau \rho)}| \, dx \leq c \tau^{\alpha_0} \fint_{B(x_0,\rho)} |\nabla v| \, dx,
\end{align}
where $\alpha_0 \in (0,1)$ and $c=c(n,p,q)>0$.

\subsection{Calder\'on--Zygmund type estimates}
We also need the following reverse type estimate and Calder\'on-Zygmund type estimate for the equation \eqref{eq:wpsi}. Here we rely on similar results in \cite[Lemma 4.15]{HasO_pp18}. As the proofs of these results are almost identical to the original one, we give just sketches of the proofs. The second lemma is an application suitable for the obstacle problem of the more abstract Calder\'on--Zygmund type estimates outlined in the first lemma.

The following is Calder\'on-Zygmund type estimates with  a ball $B_r$ with radius $r$.

\begin{lem}
Let $\phi \in \Phi_c \cap C^1([0,\infty)) \cap C^2((0,\infty))$ with $\phi'$ satisfying \inc{p-1} and \dec{q-1} for some $1<p\leq q$, and $|B_r|\leq 1$. If $w$ is a solution to \eqref{eq:wpsi}, then there exists a constant $c=c(n,p,q,p_1,q_1,L)>0$ such that
\begin{align}
\label{eq:CZ-norm}
\|\phi(|\nabla w|)\|_{L^{\theta}(B_r)} \leq c \left ( \|\phi(|\nabla \psi|)\|_{L^{\theta}(B_r)} + \|\phi(|\nabla u|)\|_{L^{\theta}(B_r)}\right )
\end{align}
for any $\theta \in \Phi_w(B_r)$ satisfying \azero, \aone, \ainc{p_1} and \adec{q_1} with constant $L\geq 1$ and $1<p_1\leq q_1$.

Moreover, fix $\kappa >0$ and assume that $\int_{B_r} \theta(x, \phi(|\nabla u|))+\theta (x,\phi(|\nabla \psi|)) \, dx \leq \kappa$. Then
\begin{align}
\label{eq:CZ-modular}
\fint_{B_r} \theta(x. \phi(|\nabla w|)) \,dx \leq c\left (\kappa^{\frac{q_1}{p_1}-1} + 1 \right ) \left (\fint_{B_r}\theta(x, \phi(|\nabla u|)) + \theta (x,\phi(|\nabla \psi|)) \, dx +1 \right )
\end{align}
for some $c\geq 0$ depending on $n, p, q, p_1, q_1$ and $L$.
\end{lem}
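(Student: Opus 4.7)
The plan is to follow the scheme of \cite[Lemma 4.15]{HasO_pp18} closely, with one structural adjustment. In that paper $w$ solves $-\mathrm{div}(\phi'(|\nabla w|)\nabla w/|\nabla w|)=0$ in $B_r$ with $w=u$ on $\partial B_r$, whereas here the equation \eqref{eq:wpsi} carries the additional divergence-form source $-\mathrm{div}(\phi'(|\nabla\psi|)\nabla\psi/|\nabla\psi|)$. That source enters as a fixed datum in exactly the same way the gradient of the boundary value $u$ does, and is responsible for the extra $\|\phi(|\nabla\psi|)\|_{L^\theta}$ term in \eqref{eq:CZ-norm}.

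First I would derive a Caccioppoli-type estimate on level sets $A_\rho(k) := B_\rho\cap\{w-u>k\}$. Testing \eqref{eq:wpsi} with $\xi = \eta^{q}(w-u-k)_+$ for a standard cutoff $\eta$ between concentric balls $B_\sigma\subset B_\rho\subset B_r$, then absorbing the $|\nabla w|$-terms via Young's inequality with $\phi^\ast$ (through Lemma \ref{est:Orlicz}(1) and Lemma \ref{lem:phi-estimates}(4)--(5)), yields
\begin{align*}
\int_{A_\sigma(k)} \phi(|\nabla w|)\,dx \lesssim \int_{A_\rho(k)} \Bigl[\phi(|\nabla u|) + \phi(|\nabla\psi|) + \phi\bigl(\tfrac{(w-u-k)_+}{\rho-\sigma}\bigr)\Bigr]\,dx,
\end{align*}
in which the $\psi$-term is handled identically to the $u$-term of the unconstrained case. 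Combining this with the Sobolev--Poincar\'e inequality and a sharp-maximal or good-$\lambda$ iteration (as in \cite{HasO_pp18}) produces the pointwise bound
\begin{align*}
\mathcal{M}^{\sharp}\bigl(\phi(|\nabla w|)\bigr)(x) \lesssim \mathcal{M}\bigl(\phi(|\nabla u|)\bigr)(x) + \mathcal{M}\bigl(\phi(|\nabla\psi|)\bigr)(x) \quad \text{for a.e.\ } x\in B_{r/2}.
\end{align*}
The assumptions \azero, \aone, \ainc{p_1}, \adec{q_1} on $\theta$ guarantee boundedness of the Hardy--Littlewood and sharp-maximal operators on $L^{\theta}(B_r)$, from which \eqref{eq:CZ-norm} follows.

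For the modular form \eqref{eq:CZ-modular} I would normalize. Set $\Lambda := \kappa^{1/p_1}+1$; then \ainc{p_1} for $\theta$ gives $\varrho_{L^\theta}(\phi(|\nabla u|)/\Lambda) + \varrho_{L^\theta}(\phi(|\nabla\psi|)/\Lambda) \lesssim 1$, so $\|\phi(|\nabla u|)\|_{L^\theta}+\|\phi(|\nabla\psi|)\|_{L^\theta} \lesssim \Lambda$. Applying \eqref{eq:CZ-norm} yields $\|\phi(|\nabla w|)\|_{L^\theta}\lesssim\Lambda$, and converting the norm bound back to a modular via \adec{q_1} produces
\begin{align*}
\int_{B_r}\theta(x,\phi(|\nabla w|))\,dx \lesssim \Lambda^{q_1} \lesssim \kappa^{q_1/p_1}+1 \lesssim (\kappa^{q_1/p_1-1}+1)(\kappa+1).
\end{align*}
Dividing by $|B_r|$ (and using $|B_r|\leq 1$) recovers \eqref{eq:CZ-modular} in the stated form, after recognising $\kappa/|B_r|$ as the average of the data modulars.

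The main technical obstacle I anticipate is the sharp-maximal step in the generalized Orlicz setting with $x$-dependent $\theta$; it is precisely there that the \aone-compatibility between $\phi$ and $\theta$ is used, and the framework of \cite{HasO_pp18} is tailored for it. Introducing $\nabla\psi$ as an additional fixed datum does not add any structurally new difficulty, since $\psi$ enters symmetrically to the boundary value $u$ throughout the argument.
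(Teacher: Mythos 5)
Your route is genuinely different from the paper's, and as sketched it has gaps in both halves of the argument.

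For \eqref{eq:CZ-norm}, the paper does not go through a sharp-maximal pointwise bound. It first proves a weighted estimate $\int_{B_r}\phi(|\nabla w|)^s\mu\,dx\lesssim\int_{B_r}\bigl(\phi(|\nabla\psi|)^s+\phi(|\nabla u|)^s\bigr)\mu\,dx$ for every $s>0$ and every Muckenhoupt weight $\mu\in A_s$, by adapting the Calder\'on--Zygmund machinery of \cite[Theorem~B.1]{HasO_pp18} (modeled on Mengesha--Phuc \cite{MenP12}) to the divergence-form datum and adding a smallness hypothesis on $\phi(|\nabla\psi|)$, and then passes to $L^\theta$ by the extrapolation theorem \cite[Corollary~5.3.4]{HarH19}. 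Your chain ``Caccioppoli $+$ Sobolev--Poincar\'e $+$ good-$\lambda$ $\Rightarrow \mathcal{M}^\sharp\bigl(\phi(|\nabla w|)\bigr)\lesssim\mathcal{M}\bigl(\phi(|\nabla u|)\bigr)+\mathcal{M}\bigl(\phi(|\nabla\psi|)\bigr)$'' is not closed: Caccioppoli plus Sobolev--Poincar\'e yields only a Gehring-type reverse H\"older improvement, and the good-$\lambda$/sharp-maximal step essentially requires a comparison at small scales with a better-behaved reference problem, which your sketch does not set up. Moreover, invoking ``boundedness of $\mathcal{M}$ and $\mathcal{M}^\sharp$ on $L^\theta$'' is not sufficient: what is operative is a Fefferman--Stein inequality $\|f\|_{L^\theta}\lesssim\|\mathcal{M}^\sharp f\|_{L^\theta}$ in generalized Orlicz spaces, and this is precisely what the paper's route through weighted $L^s$ estimates and extrapolation avoids.

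For \eqref{eq:CZ-modular}, your normalization by $\Lambda=\kappa^{1/p_1}+1$ followed by dividing by $|B_r|$ does not recover the stated inequality. After dividing, the additive ``$+1$'' inflates to $1/|B_r|$: you get $\fint_{B_r}\theta(x,\phi(|\nabla w|))\,dx\lesssim(\kappa^{q_1/p_1-1}+1)\bigl(\kappa/|B_r|+1/|B_r|\bigr)$, whereas \eqref{eq:CZ-modular} has $(\kappa^{q_1/p_1-1}+1)\bigl(\fint_{B_r}(\text{data modulars})\,dx+1\bigr)$ with a genuine $1$, not $1/|B_r|$. For $\kappa<1$ and small $|B_r|$ the extra factor $1/|B_r|$ is not controlled by the right-hand side. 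The paper handles this by renormalizing $\phi$ and $\theta$ themselves: one sets $M:=(\theta^-)^{-1}\bigl(\int_{B_r}\theta(x,\phi(|\nabla u|))+\theta(x,\phi(|\nabla\psi|))\,dx\bigr)$, rescales to $\bar\phi$, $\bar\theta$ so that the rescaled data modular is at most $1$ (hence the rescaled norms are at most $2$), applies \eqref{eq:CZ-norm} in the rescaled setting, and undoes the normalization; this avoids the spurious $1/|B_r|$.
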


\begin{proof}
In the same way to \cite[Theorem B.1 in Appendix B]{HasO_pp18} (also see \cite{MenP12}), we obtain that for any $s \in (0,\infty)$ and Muckenhoupt weight $\mu\in A_s$, 
\begin{align}
\label{eq:CZ-weight}
\int_{\Omega} \phi(|\nabla w|)^s \mu(x)\,dx \leq c \left (\int_{\Omega} \phi(|\nabla \psi|)^s \mu(x)\,dx + \int_{\Omega}\phi(|\nabla u|)^s \mu(x)\,dx\right )
\end{align}
for some constant $c = c(n,p,q,s,[\mu]_{A_s})>0$
but we need to replace (B.2) by
\begin{align*}
\div \left (\frac{\phi'(|\nabla w|)}{|\nabla w|} \nabla w\right ) = \div \left (\frac{\phi'(|\nabla \psi|)}{|\nabla \psi|} \nabla \psi\right )\text{ in } \Omega \text{ with } w=u \text{ on } \partial \Omega,
\end{align*} 
where $\Omega\subset \mathbb{R}^n$ is a Reifenberg flat domain.
In addition, to compare this equation with an equation having zero boundary values on $\partial \Omega$ in a local region near boundary, 
we add the one more assumption  
$$ \fint_{\Omega_5} \phi(|\nabla \psi|)\,dx \le \delta$$
to (B.7) and  continue similarly with the proofs
 in Lemmas~B.5 and B.11 to derive \eqref{eq:CZ-weight}. 
Hence \eqref{eq:CZ-weight} with $\Omega=B_r$ implies the desired norm inequality \eqref{eq:CZ-norm} via the extrapolation result for the generalized Orlicz function in \cite[Corollary 5.3.4]{HarH19}.

The modular inequality \eqref{eq:CZ-modular} follows by modifying the proof of \cite[Lemma 4.15]{HasO_pp18} slightly, that is replacing the definition of constant $M$ with
\begin{align*}
M:= \left (\theta^{-}\right )^{-1} \left (\int_{B_r} \theta(x, \phi(|\nabla u|)) + \theta(x, \phi(|\nabla \psi|)) \, dx \right ).
\end{align*}
Then in similar fashion we obtain
\begin{align*}
\int_{B_r} \overline{\theta}(x,\overline{\phi}(|\nabla u)) + \overline{\theta}(x,\overline{\phi}(|\nabla \psi)) \, dx \leq 1 \Rightarrow \|\overline{\phi}(|\nabla u|)\|_{L^{\overline{\theta}}(B_r)} + \|\overline{\phi}(|\nabla \psi|)\|_{L^{\overline{\theta}}(B_r)} \leq 2
\end{align*}
and the rest follows line by line.
\end{proof}

\begin{lem}

We have that  
\begin{equation}\label{est:reverse} 
\left(\fint_{B_{r}} \phi(x, |\nabla u|)^{1+\sigma_0} \,dx\right)^{\frac{1}{1+\sigma_0}}  \lesssim  \tilde{\phi}\left( \fint_{B_{2r}} |\nabla u|\,dx\right) +1
    \end{equation}
and
\begin{align}\label{est:CZwu}
\begin{split}
\fint_{B_r} \phi(x, |\nabla w|) \,dx  &\leq \left( \fint_{B_r} \phi(x, |\nabla w|)^{1+\frac{\sigma_0}2} \,dx\right)^{\frac{2}{2+\sigma_0}} \\
&\lesssim \left( \fint_{B_{r}} \phi(x, |\nabla u|)^{1+\frac{\sigma_0}2} \,dx
%+ \fint_{B_r} \phi(x, |\nabla \psi|)^{1+\frac{\sigma_0}2} \,dx
+1\right)^{\frac{2}{2+\sigma_0}},
\end{split} 
    \end{align}
    where the implicit constant depends on $n, p, q, L$ and $\overline{\nabla \psi}$. 

\end{lem}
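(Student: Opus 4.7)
My plan is to prove the two inequalities separately, each via a standard tool followed by careful translation between the generalized Orlicz function $\phi(x,\cdot)$ and the regularized function $\tilde\phi$. For \eqref{est:reverse} I will start from the reverse H\"older inequality \eqref{est:reverse-typse-inequality} of Lemma~\ref{lem:reverse}, whose hypothesis $\|\nabla u\|_{L^\phi(B_{2r})}\le 1$ is already verified by \eqref{eq:highernormsmall}. The $\nabla\psi$ term it produces is absorbed into the additive constant via $|\nabla\psi|\le \overline{\nabla\psi}$ together with \adec{q}, so the only remaining task is to dominate $\phi^-_{B_{2r}}\!\bigl(\fint_{B_{2r}}|\nabla u|\,dx\bigr)$ by $\tilde\phi\!\bigl(\fint_{B_{2r}}|\nabla u|\,dx\bigr)$. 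Setting $t:=\fint_{B_{2r}}|\nabla u|\,dx$, the Jensen-type bound already exploited inside the proof of Lemma~\ref{lem:reverse} gives $\phi^-_{B_{2r}}(t)\le 1/|B_{2r}|$, which by the definition \eqref{t1t2} forces $t\le t_2$ and leaves only two sub-cases. If $t\in[t_1,t_2]$ then Lemma~\ref{lem:tilde-phi}(1) yields $\phi^-_{B_{2r}}(t)\le \phi(x_0,t)\le \tilde\phi(t)$; if $t<t_1$ then monotonicity of $\phi^-_{B_{2r}}$ gives $\phi^-_{B_{2r}}(t)\le \omega(2r)\le 1$.

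For \eqref{est:CZwu} the first inequality is just Jensen applied to the convex function $s\mapsto s^{1+\sigma_0/2}$. For the second, I will apply the Calder\'on--Zygmund norm estimate \eqref{eq:CZ-norm} to the $\tilde\phi$-equation \eqref{eq:wpsi} with the $x$-independent weight $\theta(x,s)=s^{1+\sigma_0/2}$, which trivially satisfies the required conditions, thereby bounding the $L^{1+\sigma_0/2}$-norm of $\tilde\phi(|\nabla w|)$ by the corresponding norms of $\tilde\phi(|\nabla u|)$ and $\tilde\phi(|\nabla\psi|)$. The obstacle term is controlled by $\overline{\nabla\psi}$ and absorbed into the constant, while $\tilde\phi(|\nabla u|)$ is upgraded to $\phi(x,|\nabla u|)$ via Lemma~\ref{lem:tilde-phi}(3) in the form $\tilde\phi(s)\lesssim \phi(x,s)+1$ for $x\in B_{2r}$. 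The left-hand side is pushed down to $\phi(x,|\nabla w|)$ via the reverse comparison $\phi(x,s)\lesssim \tilde\phi(s)+1$, which I plan to establish by a three-range case analysis on $s$: on $[t_1,t_2]$ the \aone{}-type inequality implied by \wvaone{} combined with Lemma~\ref{lem:tilde-phi}(1) does the job, on $[0,t_1]$ one uses \azero{} together with $\omega(2r)\le 1$, and on $(t_2,\infty)$ one uses the explicit polynomial tail of $\tilde\phi$ from \eqref{defn:phi_B} together with \adec{q}.

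The main difficulty throughout is the bookkeeping between $\phi(x,\cdot)$, $\phi^\pm_{B_{2r}}$ and $\tilde\phi$, since Lemma~\ref{lem:tilde-phi} only supplies clean pointwise comparisons on the middle interval $[t_1,t_2]$; the outer two ranges must always be handled with ad hoc consequences of \azero{}, \adec{q} and the explicit definition of $\tilde\phi$. What keeps the first inequality \eqref{est:reverse} so tidy is the fortunate fact that averaged values of $|\nabla u|$ automatically fall below $t_2$ by \eqref{eq:highernormsmall}, so the awkward tail range $(t_2,\infty)$ does not appear at all on the left-hand side of Lemma~\ref{lem:reverse}.
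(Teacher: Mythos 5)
Your argument for \eqref{est:reverse} is sound and is essentially the paper's: you invoke \eqref{est:reverse-typse-inequality}, absorb the $\nabla\psi$ term into the constant, and then compare $\phi^-_{B_{2r}}$ with $\tilde\phi$ after noting that the averaged gradient cannot exceed $t_2$. Your split at $t_1$ rather than at $1$ (as in the paper) is harmless, since $t_1\le 1\le t_2$.

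For \eqref{est:CZwu}, however, there is a genuine gap. You propose to apply the Calder\'on--Zygmund estimate with the $x$-independent weight $\theta(x,s)=s^{1+\sigma_0/2}$, obtaining a bound for $\|\tilde\phi(|\nabla w|)\|_{L^{1+\sigma_0/2}}$, and then to translate the left-hand side from $\tilde\phi$ to $\phi(x,\cdot)$ via a pointwise reverse comparison $\phi(x,s)\lesssim\tilde\phi(s)+1$. That inequality is false on the tail $s>t_2$: by construction \eqref{defn:phi_B}, $\tilde\phi'$ continues with power $p-1$ beyond $t_2$, so $\tilde\phi(s)$ grows like $t_2^{\,q-p}\,s^{p}$ for $s\gg t_2$, while $\phi(x,s)$ may grow like $s^{q}$ (only \adec{q} is assumed). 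Thus $\phi(x,s)/(\tilde\phi(s)+1)\approx (s/t_2)^{q-p}\to\infty$ whenever $p<q$, and no consequence of \adec{q} rescues the pointwise bound on $(t_2,\infty)$. Since $|\nabla w|$ is not a priori confined to $[0,t_2]$, the claimed pointwise passage does not go through.

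The paper sidesteps this entirely by a different and essential trick: it chooses the genuinely $x$-dependent weight $\theta(x,t):=\bigl[\phi\bigl(x,\tilde\phi^{-1}(t)\bigr)\bigr]^{1+\sigma_0/2}$, for which $\theta(x,\tilde\phi(s))=\phi(x,s)^{1+\sigma_0/2}$ exactly. With this $\theta$, the modular form \eqref{eq:CZ-modular} of the Calder\'on--Zygmund estimate (applied to the autonomous function $\tilde\phi$) converts all three modulars from $\tilde\phi$ to $\phi(x,\cdot)$ by composition, with no error term and no pointwise comparison, and the $\nabla\psi$ term is then trivially absorbed via $\overline{\nabla\psi}$. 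You should replace your power-function weight by this composed weight; one must of course still check that this $\theta$ satisfies \azero{}, \aone{}, \ainc{p_1}, \adec{q_1}, which follows from the structural assumptions on $\phi$ and on $\tilde\phi$ from Lemma \ref{lem:tilde-phi}, and that the bound $\kappa$ entering \eqref{eq:CZ-modular} is controlled by the smallness assumption \eqref{ass:rsmall}.
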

\begin{proof}
Since \wvaone{} implies \aone,  we have from \eqref{est:reverse-typse-inequality} in Lemma \ref{lem:reverse} 
that
 \begin{align*}
&\left (\fint_{B_{r}} \phi(x, |\nabla u|)^{1+\sigma_0} \, dx \right )^{1/(1+\sigma_0)}\\
& \quad\qquad\lesssim \phi^{-}_{B_{2r}}\left (\fint_{B_{2r}} |\nabla u| \,dx\right ) +  \left( \fint_{B_{2r}} \phi(x, |\nabla \psi|)^{1+\sigma_0} \, dx  \right)^{1/(1+\sigma_0)} +1.
\end{align*}
Using $\overline{\nabla \psi}$ we can absorb the second term on the right-hand side to the implicit constant. 
When $\fint_{B_{2r}} |\nabla u| \, dx \leq 1$, we obtain 
 \eqref{est:reverse} by finiteness of $\phi^{-}_{B_{2r}}$. 
  In the other case that $\fint_{B_{2r}} |\nabla u| \, dx >1$,  we have 
\begin{align*}
1 < \fint_{B_{2r}} |\nabla u|  dx \leq (\phi^{-}_{B_{2r}})^{-1}\left(\fint_{B_{2r}} \phi^{-}_{B_{2r}}(|\nabla u|) dx \right) \leq (\phi^{-}_{B_{2r}})^{-1}(|B_{2r}|^{-1}) =t_2,
\end{align*}
where $t_2$ is defined  in \eqref{t1t2}. 
Thus, Lemma \ref{lem:tilde-phi} (1) yields that
\begin{align*}
&\left (\fint_{B_{r}} \phi(x,|\nabla u|)^{1+\sigma_0}\, dx \right )^{1/(1+\sigma_0)} \lesssim \phi^-_{B_{2r}} \left (\fint_{B_{2r}} |\nabla u| \, dx \right ) +1 \\
& \quad\qquad  \lesssim \phi\left (x_0, \fint_{B_{2r}} |\nabla u| \, dx\right ) +1 \lesssim \tilde \phi \left (\fint_{B_{2r}} |\nabla u| \, dx \right )+1
\end{align*}
 which is \eqref{est:reverse}.

In order to prove \eqref{est:CZwu}, let us consider the function $\theta \in \Phi_{w}(B_r)$ which is given in  
\begin{align*}
\theta(x,t) := [\phi(x, \tilde \phi^{-1}(t))]^{1+\frac{\sigma_0}{2}}
\end{align*}
for any fixed $\sigma_0 \in (0,1)$. Applying the Calder\'on-Zygmund estimates  \eqref{eq:CZ-modular} and local boundedness of $|\nabla \psi|$ we have that
\begin{align*}
\fint_{B_r} \phi(x, |\nabla w|)^{1+\frac{\sigma_0}2} \,dx & = \fint_{B_r} \theta(x, \tilde{\phi}(|\nabla w|))\,dx  \\
& \lesssim  \fint_{B_{r}} \theta(x, \tilde{\phi}(|\nabla u|))\,dx +\fint_{B_{r}} \theta(x, \tilde{\phi}(|\nabla \psi|))\,dx +1\\
& = \fint_{B_{r}} \phi(x, |\nabla u|)^{1+\frac{\sigma_0}2} \,dx +\fint_{B_{r}} \phi(x, |\nabla \psi|)^{1+\frac{\sigma_0}2} \,dx +1\\
&\lesssim \fint_{B_{r}} \phi(x, |\nabla u|)^{1+\frac{\sigma_0}2} \,dx +1.
   \end{align*}
 The desired inequality \eqref{est:CZwu} follows by taking the $\frac{2}{2+\sigma_0}$'th root from both sides and applying H\"older's inequality on the left hand side.
\end{proof}

\subsection{Comparison estimates} Now we are ready to prove comparison estimates between different solutions. These are the main ingredients for the proof of Theorem~\ref{thm:C^1,alpha}. Much of the proof is similar to the case without any obstacle and these omitted details can be found in \cite{HasO_pp18}.

\begin{lem}\label{lem:comparison-uw}
Suppose that $w$ is a solution to the equation \eqref{eq:wpsi}, $u$ is a solution to the equation \eqref{weakformobstacle} and $B_{2r} \Subset \Omega$. Then
$$ \fint_{B_r} |\nabla u - \nabla w| \,dx \leq c  (\omega(2r)^{p/q}+r^{\gamma} +r^{\beta})^{\frac1{2q}} \left(\fint_{B_{2r}} |\nabla u |\,dx+ 1\right)$$ 
for some $\gamma = (n,p,q,L) \in (0,1)$ and some constant $c = c(n,p,q,L,[\nabla \psi]_{\beta}, \overline{\nabla \psi})>0,$
where $\omega$ is from the assumption \wvaone{}.

\end{lem}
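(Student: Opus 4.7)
The plan is to exploit the fact that the solution $w$ of \eqref{eq:wpsi} stays above the obstacle, so that $w-u$ becomes an admissible test direction in the obstacle variational inequality; the standard monotonicity machinery then reduces the comparison to two error terms, one measuring how far the regularized function $\tilde\phi$ is from $\phi(x,\cdot)$ and the other the oscillation of $\nabla\psi$.

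First I would apply the comparison principle of Lemma~\ref{lem:psi<w}: since the divergence identity in \eqref{eq:wpsi} holds with equality and $w=u\geq\psi$ on $\partial B_r$ (as $u\in\mathcal{K}^\phi_\psi$), we obtain $\psi\leq w$ a.e.\ in $B_r$. Consequently $\eta:=w-u\in W^{1,\tilde\phi}_0(B_r)$ satisfies $\eta\geq\psi-u$ and has compact support in $B_r$, so it is admissible in \eqref{weakformobstacle} and yields
\[
\int_{B_r}\partial\phi(x,\nabla u)\cdot\nabla(w-u)\,dx \geq 0.
\]
Testing \eqref{eq:wpsi} against the same $\eta$ gives $\int_{B_r}\partial\tilde\phi(\nabla w)\cdot\nabla(w-u)\,dx=\int_{B_r}\partial\tilde\phi(\nabla\psi)\cdot\nabla(w-u)\,dx$. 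Combining these with Lemma~\ref{est:Orlicz}(2) applied to the quadratic form on the left gives the master inequality
\[
\int_{B_r}\frac{\tilde\phi'(|\nabla u|+|\nabla w|)}{|\nabla u|+|\nabla w|}|\nabla u-\nabla w|^2\,dx \lesssim I+I\!I,
\]
with $I:=\int_{B_r}[\partial\tilde\phi(\nabla u)-\partial\phi(x,\nabla u)]\cdot\nabla(u-w)\,dx$ and $I\!I:=\int_{B_r}\partial\tilde\phi(\nabla\psi)\cdot\nabla(w-u)\,dx$.

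Next I would estimate the two error terms. For $I\!I$, since $\partial\tilde\phi((\nabla\psi)_{B_r})$ is a constant vector and $w-u\in W^{1,\tilde\phi}_0(B_r)$, integration by parts yields $\int_{B_r}\partial\tilde\phi((\nabla\psi)_{B_r})\cdot\nabla(w-u)\,dx=0$; thus $I\!I=\int_{B_r}[\partial\tilde\phi(\nabla\psi)-\partial\tilde\phi((\nabla\psi)_{B_r})]\cdot\nabla(w-u)\,dx$, and the H\"older continuity of $\nabla\psi$ together with $|\nabla\psi|\leq\overline{\nabla\psi}$ and the $C^1$-regularity of $\tilde\phi$ produces a factor of order $r^\beta$. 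For $I$, the comparison of $\tilde\phi$ with $\phi(x,\cdot)$ (Lemma~\ref{lem:tilde-phi}(1), transferred to derivatives via the convolution construction of $\tilde\phi$) supplies a multiplicative factor of order $r+\omega(2r)$ in front of a $\phi'$-type expression. Applying the generalized Young inequality (Lemma~\ref{lem:phi-estimates}(4)) to both $I$ and $I\!I$ with a small absorption parameter, using Lemma~\ref{est:Orlicz}(4) to convert the $V$-quadratic form on the left into a modular of $\nabla u-\nabla w$, and controlling $\fint_{B_r}\tilde\phi(|\nabla w|)\,dx$ via the energy bound \eqref{est:energy}, one arrives at a modular estimate of the form
\[
\fint_{B_r}\tilde\phi(|\nabla u-\nabla w|)\,dx \lesssim \bigl(\omega(2r)^{p/q}+r^\gamma+r^\beta\bigr)\!\left(\fint_{B_r}\tilde\phi(|\nabla u|)\,dx+1\right),
\]
where the exponent $p/q$ accounts for the interplay between \inc{p-1} and \dec{q-1} when absorbing $\omega(2r)$ through Young, and $\gamma$ emerges from the reverse H\"older improvement of Lemma~\ref{lem:reverse}.

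Finally, I would convert this modular bound into the desired $L^1$-gradient estimate. Jensen's inequality gives $\tilde\phi(\fint_{B_r}|\nabla u-\nabla w|\,dx)\leq\fint_{B_r}\tilde\phi(|\nabla u-\nabla w|)\,dx$, and the bound $\tilde\phi^{-1}(s)\lesssim s^{1/q}$ (valid for small $s$ by \dec{q} of $\tilde\phi$) combined with a Cauchy--Schwarz-type split of the smallness factor (which produces the additional $1/2$ in the exponent) yields
\[
\fint_{B_r}|\nabla u-\nabla w|\,dx \lesssim \bigl(\omega(2r)^{p/q}+r^\gamma+r^\beta\bigr)^{1/(2q)}\!\left(\tilde\phi\!\left(\fint_{B_{2r}}|\nabla u|\,dx\right)+1\right)^{1/q}\!.
\]
The reverse H\"older estimate \eqref{est:reverse} then replaces the last factor by $\fint_{B_{2r}}|\nabla u|\,dx+1$, giving the claim. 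The most delicate step will be the final exponent bookkeeping: extracting precisely the power $1/(2q)$ of the smallness factor forces one to choose a Young absorption parameter of order $\sqrt{\omega(2r)^{p/q}+r^\gamma+r^\beta}$ and to carefully coordinate Lemma~\ref{lem:phi-estimates}(4), the reverse H\"older inequality of Lemma~\ref{lem:reverse}, and the energy bound of Lemma~\ref{lem:wv<u} so that the right-hand side collapses to the announced form in terms of $\fint_{B_{2r}}|\nabla u|\,dx$ alone.
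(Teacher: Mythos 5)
Your overall scaffolding is correct and matches the paper's: the comparison principle of Lemma~\ref{lem:psi<w} makes $w-u$ admissible in \eqref{weakformobstacle}; testing both \eqref{eq:wpsi} and the obstacle inequality with $w-u$ produces a monotonicity identity; Young absorption plus Jensen and \adec{q} convert the modular bound into an $L^1$-gradient bound; and \eqref{est:reverse} closes the loop. But the crucial middle step — estimating the error terms — is where your sketch departs from the paper in a way that creates a genuine gap.

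You reduce the problem to the term
\[
I=\int_{B_r}\bigl[\partial\tilde\phi(\nabla u)-\partial\phi(x,\nabla u)\bigr]\cdot\nabla(u-w)\,dx,
\]
i.e.\ a \emph{derivative-level} discrepancy $\tilde\phi'(t)-\partial_t\phi(x,t)$. No such estimate is available in the paper: Lemma~\ref{lem:tilde-phi}(1) and \wvaone{} only control \emph{function values} $\tilde\phi(t)$ vs.\ $\phi(x,t)$ (resp.\ $\phi^+_{B_{2r}}(t)$ vs.\ $\phi^-_{B_{2r}}(t)$). Pointwise closeness of convex functions does not give pointwise closeness of their derivatives, and the growth conditions \inc{p-1}, \dec{q-1} give only comparability (bounded ratio), not smallness, of $\phi'$. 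Your parenthetical ``transferred to derivatives via the convolution construction'' handles $\tilde\phi'$ vs.\ $\phi'(x_0,\cdot)$ at best, but the step $\phi'(x_0,t)$ vs.\ $\phi'(x,t)$ would still require a derivative form of \wvaone{} that is simply not assumed. The paper sidesteps this exactly by applying Lemma~\ref{est:Orlicz}(3) \emph{twice} — once for $\tilde\phi$ and once for $\phi(x,\cdot)$ — which converts everything into \emph{value} differences $I_1=\fint[\tilde\phi(|\nabla u|)-\phi(x,|\nabla u|)]$ and $I_2=\fint[\phi(x,|\nabla w|)-\tilde\phi(|\nabla w|)]$, absorbing the obstacle-inequality term into the middle difference $\phi(x,|\nabla u|)-\phi(x,|\nabla w|)\le 0$ in the process.

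The second omission is where the exponents $\omega(2r)^{p/q}$ and $r^\gamma$ actually come from. You attribute $p/q$ to ``the interplay between \inc{p-1} and \dec{q-1} when absorbing $\omega(2r)$ through Young'' and $\gamma$ to ``the reverse H\"older improvement,'' but neither is right. In the paper they come from a three-way split of $B_r$ into $E_1=\{\phi^-(|\nabla w|)\le\omega(2r)\}$, $E_2=\{\omega(2r)<\phi^-(|\nabla w|)\le|B_{2r}|^{-1+\varepsilon_0}\}$, $E_3$ the remainder. On $E_1$, \dec{q}+\azero{} give $|\nabla w|\lesssim\omega(2r)^{1/q}$ and then \inc{p}+\azero{} give the $\omega(2r)^{p/q}$ contribution; on $E_2$, \wvaone{} and Lemma~\ref{lem:tilde-phi}(1) apply directly and give the $(r+\omega(2r))$ factor; on $E_3$, higher integrability (via \eqref{est:CZwu}, \eqref{est:reverse}) absorbs a surplus $|B_{2r}|^{(1-\varepsilon_0)\sigma_0/2}$ and produces $r^\gamma$ with $\gamma=\min\{1,n\sigma_0^2/(4(2+\sigma_0))\}$. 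Without this decomposition — or something equivalent — your sketch cannot produce the stated exponents, and since \wvaone{} is only usable in the intermediate regime $E_2$, the split is not a detail but the engine of the estimate. I would rewrite the argument to mirror the paper's value-difference decomposition and carry out the $E_1/E_2/E_3$ estimate explicitly.
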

\begin{proof}
From \eqref{eq:wpsi}, we see that
 \begin{equation}\label{est:solcomobstacle}
  \int_{B_r}  \frac{\tilde{\phi}' (|\nabla w|)}{|\nabla w |} \nabla w \cdot (\nabla w-\nabla u) \,dx =   \int_{B_r}  \frac{\tilde{\phi}' (|\nabla \psi|)}{|\nabla \psi |} \nabla \psi \cdot (\nabla w-\nabla u) \,dx,
  \end{equation}
  by taking $w-u \in W^{1,\tilde \phi}_0(B_r)$ as a test function to the weak
formulation.

Recalling Lemma \ref{lem:psi<w} and setting $w = u $ in $\Omega \backslash B_r$,  we have that $w \in W^{1,\phi}(\Omega)$ and $w \geq \psi$ a.e. in $\Omega$, and so $w \in \mathcal{K}^{\phi}_{\psi}(\Omega).$
Therefore by taking $\eta:= w-u $
 as a test function in  \eqref{weakformobstacle}, 
 \begin{equation}\label{est:solobstacle}
% \int_{B_r}  \partial \phi  (x,\nabla u) \cdot (\nabla w-\nabla u) \,dx:=
 \int_{B_r}  \frac{\partial_t \phi  (x,|\nabla u|)}{|\nabla u |} \nabla u \cdot (\nabla w-\nabla u) \,dx  \geq 0.
\end{equation}

From Lemma \ref{est:Orlicz} (3) and \eqref{est:solcomobstacle}, we then derive that 
\begin{align*}
 &\int_{B_r } \tilde{\phi}''( |\nabla u |+| \nabla w |) |\nabla u - \nabla w|^2 \,dx\\
 & \lesssim \int_{B_r }\tilde{\phi}( |\nabla u  |) - \tilde{\phi}( | \nabla w |) -  \frac{\tilde{\phi}' (|\nabla w|)}{|\nabla w |} \nabla w  \cdot (\nabla u-\nabla w) \,dx \\
 & = \int_{B_r }\tilde{\phi}( |\nabla u  |) - \tilde{\phi}( | \nabla w |)\,dx +  \int_{B_r }\frac{\tilde{\phi}' (|\nabla \psi|)}{|\nabla \psi |} \nabla \psi  \cdot (\nabla w-\nabla u) \,dx \\
  & = \int_{B_r }\tilde{\phi}( |\nabla u  |) -\phi(x, |\nabla u  |)  \,dx+\int_{B_r } \phi(x, |\nabla u  |) -\phi(x, |\nabla w  |) \,dx  \\
  &\quad + \int_{B_r }\phi(x, |\nabla w  |) - \tilde{\phi}( | \nabla w |)\,dx  + \int_{B_r }\frac{\tilde{\phi}' (|\nabla \psi|)}{|\nabla \psi |} \nabla \psi  \cdot (\nabla w-\nabla u) \,dx.
    \end{align*}
Here, applying Lemma \ref{est:Orlicz} (3) pointwise again with \eqref{est:solobstacle}, we see that
\begin{align*}
&\int_{B_r}\phi(x, |\nabla u  |) -\phi(x, |\nabla w  |) \,dx \\
&\lesssim\int_{B_r} \frac{\partial_t \phi(x,|\nabla u|)}{|\nabla u |} \nabla u\cdot (\nabla u-\nabla w)\,dx - \int_{B_r} \frac{\partial_t \phi(x,|\nabla u| + |\nabla w|)}{|\nabla u| + |\nabla w|} |\nabla u - \nabla w|^2 \,dx \\
&\leq-\int_{B_r} \frac{\partial_t \phi(x,|\nabla u| + |\nabla w|)}{|\nabla u| + |\nabla w|} |\nabla u - \nabla w|^2 \,dx  \leq 0
    \end{align*}
    since $\phi$ is increasing by assumption.
    Combining the above two inequalities, we obtain that 
    \begin{align*}
     &\fint_{B_r } \tilde{\phi}''( |\nabla u |+| \nabla w |) |\nabla u - \nabla w|^2 \,dx\\
  & \leq \underbrace{\fint_{B_r }\tilde{\phi}( |\nabla u  |) -\phi(x, |\nabla u  |)  \,dx}_{=:I_1} +  \underbrace{\fint_{B_r }\phi(x, |\nabla w  |) - \tilde{\phi}( | \nabla w |)\,dx}_{=:I_2}  \\
   &\quad +\underbrace{ \fint_{B_r }\frac{\tilde{\phi}' (|\nabla \psi|)}{|\nabla \psi |} \nabla \psi  \cdot (\nabla w-\nabla u) \,dx.}_{:=I_3}
    \end{align*}

On the other hand, using Lemma \ref{est:Orlicz} (4) and \eqref{est:energy}, we infer that
\begin{align*}
&\fint_{B_r} \tilde{\phi}( | \nabla u - \nabla w|) \,dx  \\
&\lesssim \kappa   \fint_{B_r } \tilde{\phi}( |\nabla u  |) + \tilde{\phi}( | \nabla w |)  \,dx + \kappa^{-1} \fint_{B_r } \tilde{\phi}''( |\nabla u |+| \nabla w |) |\nabla u - \nabla w|^2 \,dx   \\
 & \lesssim \kappa \left(  \fint_{B_r } \tilde{\phi}( |\nabla u  |)  \,dx  + 1\right)+ \kappa^{-1}  (I_1+I_2+I_3)
\end{align*}
for any $\kappa \in (0,\infty).$
%Estimating $I_2$ the same way as in the proof of  \cite[Lemma 6.2]{HasO_pp18}
%we obtain 
%\begin{align*}
%|I_2| &\leq \fint_{B_r}|\phi(x, |\nabla w|) - \tilde \phi(|\nabla w|)| \, dx \\
%& \lesssim   (\omega(2r)^{p/q} + r+ r^{\frac{n\sigma_0^2}{4(2+\sigma_0)}}) \left (\tilde \phi \left (\fint_{B_{2r}} |\nabla u| \, dx \right ) +1 \right )\\
%& \lesssim   (\omega(2r)^{p/q} +r^{\gamma}) \left (\tilde \phi \left (\fint_{B_{2r}} |\nabla u| \, dx \right ) +1 \right )
%\end{align*}
%where $\gamma= \min \{ 1, \frac{n\sigma_0^2}{4(2+\sigma_0)}\}$.
% Note that this is the only place, where the assumption \wvaone{} is needed. The estimate for $I_1$ is analogous to $I_2$ with $\nabla u$ instead of $\nabla w$.

 To estimate $I_2$, we adopt the method in the proof of  \cite[Lemma 6.2]{HasO_pp18}.  We start by splitting $B_r$ into three parts:
\begin{align*}
E_1&:= B_r \cap \{\phi^-(|\nabla w|) \leq \omega(2r)\} \\
E_2 &:= B_r \cap \{ \omega(2r) < \phi^-(|\nabla w|) \leq |B_{2r}|^{-1+\varepsilon_0}\}\\
E_3&:= B_r \cap \{|B_{2r}|^{-1+\varepsilon_0} < \phi^{-}(|\nabla w|) \}.
\end{align*}
In the set $E_1$ assumptions \dec{q} and \azero{} of $\phi$ yield that $|\nabla w| \lesssim \omega(2r)^{\frac1q}$. Now \inc{p} and \azero{} of $\phi$ and $\tilde{\phi}$ imply
%\marginpar{Condition \adec{q} or \dec{q} and \ainc{p} or \inc{p}?} 
\begin{align}
\label{est:E_1}
\fint_{B_r} |\phi(x, |\nabla w|) - \tilde \phi(|\nabla w|)| \chi_{E_1}  \,dx \lesssim \omega(2r)^{p/q} \fint_{B_r}\chi_{E_1} \, dx \leq \omega(2r)^{p/q} .
\end{align}

In the set $E_2$ we get
\begin{align*}
\omega(2r) < \phi^-(|\nabla w|) \leq |B_{2r}|^{-1+\varepsilon_0} < |B_{2r}|^{-1}
\end{align*}
which implies that $t_1 < |\nabla w| < t_2$. Therefore we can use \wvaone{} and Lemma \ref{lem:tilde-phi} (1) to get
\begin{align*}
|\phi(x, |\nabla w|)-\tilde \phi(|\nabla w|)| &\leq |\phi(x, |\nabla w|)- \phi(x_0, |\nabla w|) | + |\phi(x_0, |\nabla w|) - \tilde \phi(|\nabla w|)|\\
&\leq \phi^+(|\nabla w|) - \phi^-(|\nabla w|) + \tilde \phi(|\nabla w|) - \phi(x_0, |\nabla w|) \\
&\lesssim (1+\omega(2r))\phi^-(|\nabla w|) +\omega(2r) - \phi^-(|\nabla w|) + r\phi^{-}(|\nabla w|) + \omega(2r) \\
& \lesssim (r+\omega(2r)) \phi^-(|\nabla w|) + \omega(2r).
\end{align*}
 Note that this is the only place, where the assumption \wvaone{} is needed. 
Now using \eqref{est:CZwu} and \eqref{est:reverse} we see that
\begin{align}
\label{est:E_2}
\fint_{B_r} |\phi(x, |\nabla w|) - \tilde \phi(|\nabla w)| \chi_{E_2} \, dx \lesssim (r + \omega(2r)) \left (\tilde \phi\left( \fint_{B_{2r}} |\nabla u| \, dx \right ) + 1 \right ).
\end{align}

For $E_3$ we have the inequality $1< |B_{2r}|^{1-\varepsilon_0}\phi^-(|\nabla w|)$, which implies with Lemma~\ref{lem:tilde-phi} (3)
\begin{align*}
|\phi(x, |\nabla w|) - \tilde \phi(|\nabla w|) | &\lesssim \phi(x,|\nabla w|) + 1 \lesssim \phi(x,|\nabla w|)\\
&\leq [|B_{2r}|^{1-\varepsilon_0} \phi^-(|\nabla w|)]^{\sigma_0/2}\phi(x, |\nabla w|)\\
&\lesssim r^{n(1-\varepsilon_0)\sigma_0/2} \phi(x,|\nabla w|)^{1+\frac{\sigma_0}2}.
\end{align*}
Letting
$$ \varepsilon_0 : = \frac{\sigma_0 }{2(2+\sigma_0)}$$
and
integrating both sides over the set $E_3$ we obtain
\begin{align*}
\fint_{B_r}|\phi(x, |\nabla w|) - \tilde \phi(|\nabla w|) | \chi_{E_3} \, dx \lesssim r^{\frac{n(4+\sigma_0)\sigma_0}{4(2+\sigma_0)}} \left ( \fint_{B_r} \phi(x,|\nabla w|)^{1+\frac{\sigma_0}2} \, dx\right )^{\frac{2}{2+\sigma_0}+ \frac{\sigma_0}{2+\sigma_0}}.
\end{align*}

Firstly, \eqref{est:CZwu} and \eqref{est:reverse} imply that 
\begin{align*}
\left (\fint_{B_r} \phi(x,|\nabla w|)^{1+\frac{\sigma_0}2}\, dx \right )^{\frac2{2+\sigma_0}} \lesssim \tilde \phi\left (\fint_{B_{2r}} |\nabla u| \, dx \right ) +1.
\end{align*}
Secondly, by \eqref{eq:highernormsmall}
\begin{align*}
\left (\fint_{B_r} \phi(x, |\nabla w|)^{1+\frac{\sigma_0}2} \, dx \right )^{\frac{\sigma_0}{2+\sigma_0}} \leq |B_r|^{-\frac{\sigma_0}{2+\sigma_0}} \lesssim r^{-\frac{n\sigma_0}{2+\sigma_0}}.
\end{align*}
Using three previous estimates we find that
\begin{align}
\label{est:E_3}
\fint_{B_r}|\phi(x, |\nabla w|) - \tilde \phi(|\nabla w|) | \chi_{E_3} \, dx \lesssim r^{\frac{n\sigma_0^2}{4(2+\sigma_0)}} \left (\tilde \phi \left (\fint_{B_{2r}} |\nabla u| \, dx \right ) +1 \right ).
\end{align}
Combining \eqref{est:E_1}, \eqref{est:E_2} and \eqref{est:E_3} together, we have arrived at
\begin{align*}
|I_2| &\leq \fint_{B_r}|\phi(x, |\nabla w|) - \tilde \phi(|\nabla w|)| \, dx \\
& \lesssim   (\omega(2r)^{p/q} + r+ r^{\frac{n\sigma_0^2}{4(2+\sigma_0)}}) \left (\tilde \phi \left (\fint_{B_{2r}} |\nabla u| \, dx \right ) +1 \right )\\
& \lesssim   (\omega(2r)^{p/q} +r^{\gamma}) \left (\tilde \phi \left (\fint_{B_{2 r}} |\nabla u| \, dx \right ) +1 \right )
\end{align*}
where $\gamma= \min \{ 1, \frac{n\sigma_0^2}{4(2+\sigma_0)}\}$.
%The estimate for $I_1$ is similar without using the Calder\'on--Zygmund estimates.
 The estimate for $I_1$ is analogous to $I_2$ with $\nabla u$ instead of $\nabla w$.

 We continue with estimating $I_3$.   We start by calculating
\begin{align*}
\partial \tilde \phi(|x|) - \partial \tilde \phi(y) &= \int_{0}^1 \dfrac{d}{dt} \partial \tilde \phi(tx + (1-t)y) \, dt \leq |x-y| \int_{0}^1 \tilde \phi''(|x|+|y|) \, dt \\
&\approx |x-y| \dfrac{\tilde \phi'(|x|+|y|)}{|x|+|y|}.
\end{align*}
Identifying $x = \nabla \psi$ and $y = \nabla \psi(x_0)$, where $x_0$ is the center of $B_r$, and using Lemma \ref{est:Orlicz} (2) we have
\begin{align*}
|I_3|&=\left| \fint_{B_r} \frac{\tilde{\phi}' (|\nabla \psi|)}{|\nabla \psi |} \nabla \psi  \cdot (\nabla w-\nabla u) \,dx\right|\\
& =\left| \fint_{B_r} \left[\frac{\tilde{\phi}' (|\nabla \psi|)}{|\nabla \psi |} \nabla \psi -\frac{\tilde{\phi}' (|\nabla \psi(x_0)|)}{|\nabla \psi(x_0) |} \nabla \psi (x_0)\right] \cdot (\nabla w-\nabla u) \,dx\right|\\
&\lesssim  \fint_{B_r} \frac{\tilde{\phi}' (|\nabla \psi|+|\nabla \psi(x_0)|)}{|\nabla \psi|+|\nabla \psi(x_0)|}  |\nabla \psi- \nabla \psi (x_0)| \left| \nabla w-\nabla u \right| \,dx.
%&\lesssim  \int_{B_r} \tilde{\phi}'' (|\nabla \psi|+|\nabla \psi(x_0)|)  |\nabla \psi- \nabla \psi (x_0)| \left| \nabla u-\nabla w \right| \,dx\\
%&\lesssim r^{\beta}  \int_{B_r} | \nabla u|+|\nabla w|+1 \,dx \lesssim  r^{\beta}\left(  \int_{B_r}  \tilde{\phi}(|\nabla u|) \,dx +r^n\right),
\end{align*}
%where the implicit constant depends on $n, p,q,L,[\nabla \psi]_{\beta}, \overline{\nabla \psi}.$
Since $\tilde{\phi}'$ satisfies \inc{p-1}, we note that if $p\geq 2,$ then  $\tilde{\phi}'(t)/ t$ is non-decreasing for $t \in (0,\infty).$
Then 
\begin{align*}
|I_3|&\lesssim\fint_{B_r} \frac{\tilde{\phi}' (|\nabla \psi|+|\nabla \psi(x_0)|)}{|\nabla \psi|+|\nabla \psi(x_0)|}  |\nabla \psi- \nabla \psi (x_0)| \left| \nabla w-\nabla u \right| \,dx
\\
&\lesssim r^{\beta}  \fint_{B_r} | \nabla u|+|\nabla w| \,dx \lesssim  r^{\beta}\left(  \fint_{B_r}  \tilde{\phi}(|\nabla u|) \,dx +1\right).
\end{align*}
If $p < 2,$ we deduce that 
\begin{align*}
|I_3|&\lesssim\fint_{B_r} \frac{\tilde{\phi}' (|\nabla \psi|+|\nabla \psi(x_0)|)}{|\nabla \psi|+|\nabla \psi(x_0)|}  |\nabla \psi- \nabla \psi (x_0)| \left| \nabla w-\nabla u \right| \,dx
\\
&\lesssim\fint_{B_r} \frac{\tilde{\phi}' (|\nabla \psi|+|\nabla \psi(x_0)|)}{|\nabla \psi|+|\nabla \psi(x_0)|} (|\nabla \psi|+|\nabla \psi(x_0)|)^{2-p} |\nabla \psi- \nabla \psi (x_0)|^{p-1} \left| \nabla w-\nabla u \right| \,dx
\\
&=\fint_{B_r} \frac{\tilde{\phi}' (|\nabla \psi|+|\nabla \psi(x_0)|)}{(|\nabla \psi|+|\nabla \psi(x_0)|)^{p-1}}  |\nabla \psi- \nabla \psi (x_0)|^{p-1} \left| \nabla w-\nabla u \right| \,dx
\\
&\lesssim r^{\beta(p-1)}  \fint_{B_r} | \nabla u|+|\nabla w| \,dx \lesssim  r^{\beta(p-1)}\left(  \fint_{B_r}  \tilde{\phi}(|\nabla u|) \,dx +1\right),
\end{align*}
since $\tilde{\phi}'(t)/ t^{p-1}$ is non-decreasing for $t \in (0,\infty)$.
In turn, we conclude that 
\begin{align*}
|I_3|\lesssim  r^{\beta}\left(  \fint_{B_r}  \tilde{\phi}(|\nabla u|) \,dx +1\right),
\end{align*}
where the implicit constant depends on $n, p,q,L,[\nabla \psi]_{\beta}, \overline{\nabla \psi}.$

Note from \eqref{est:reverse} and Lemma \ref{lem:tilde-phi} (3) that
\begin{align*}
\fint_{B_r}  \tilde{\phi}(|\nabla u|) \,dx &\lesssim \fint_{B_r}  \phi(x,|\nabla u|)  \,dx +1\\
 & \leq \left( \fint_{B_{r}}  \phi(x,|\nabla u|)^{1+\sigma_0}  \,dx\right)^{\frac1{1+\sigma_0}} +1 \lesssim \tilde{\phi} \left( \fint_{B_{2r}} |\nabla u|\,dx \right) +1.
\end{align*} 
Hence, for any $\kappa \in (0,1)$, we conclude that 
\begin{align*}
&\fint_{B_r} \tilde{\phi}( | \nabla u - \nabla w|) \,dx  \\
 &\lesssim \kappa \left(  \fint_{B_r } \tilde{\phi}( |\nabla u  |)  \,dx  + 1 \right) +  \kappa^{-1} (\omega(2r)+r^{\gamma} +r^{\beta}) \left[  \tilde{\phi}\left(\fint_{B_{2r}} |\nabla u |\,dx\right)+ 1\right]\\
 % &\lesssim \left(\kappa +  \kappa^{-1} (\omega(r)+r^{\gamma} +r^{\beta})\right) \left[  \tilde{\phi}\left(\fint_{B_{2}r}} |\nabla u |\,dx\right)+ 1\right].\\
    &\lesssim \left(\kappa +  \kappa^{-1}  [\omega(2r)^{p/q} +r^{\gamma}+ r^{\beta}]\right) \left[  \tilde{\phi}\left(\fint_{B_{2 r}} |\nabla u |\,dx\right)+ 1\right].
       \end{align*} 
 By taking $\kappa:=\left (\omega(2r)^{p/q} +r^{\gamma}+ r^{\beta}\right)^{\frac12},$  we then conclude that 
 $$ \fint_{B_r} \tilde{\phi}(|\nabla u - \nabla w|) \,dx \lesssim   \left (\omega(2r)^{p/q} + r^{\gamma}+ r^{\beta}\right)^{\frac12} \left[  \tilde{\phi}\left(\fint_{B_{2r}} |\nabla u |\,dx\right)+ 1\right].$$
In turn, by Jensen's inequality and \adec{q} of $\tilde{\phi}$, we obtain that
  \begin{align*}
\tilde{\phi}\left( \fint_{B_r} |\nabla u - \nabla w| \,dx \right) &\lesssim \fint_{B_r} \tilde{\phi}(|\nabla u - \nabla w|) \,dx\\
& \lesssim      \tilde{\phi} \left( (\omega(2r)^{p/q} + r^{\gamma}+ r^{\beta})^{\frac1{2q}}
\left[\fint_{B_{2r}} |\nabla u |\,dx+ 1\right]\right),
   \end{align*}
which implies the claim since $\tilde{\phi}$ is strictly increasing.
\end{proof}

With similar arguments we get a comparison estimate between $w$ and $v$.

\begin{lem}\label{lem:comparison-wv}
Suppose that $w$ is a solution to the equation \eqref{eq:wpsi}, $v$ is a solution to the equation \eqref{eq:vw} and $B_{2r} \Subset \Omega$. Then
 $$ \fint_{B_r} |\nabla w - \nabla v| \,dx  \leq c\,     r^{\frac{\beta}{2q}} \left(\fint_{B_{2r}} |\nabla u |\,dx+ 1\right)$$
for some constant $c = c(n,p,q,L,[\nabla \psi]_{\beta}, \overline{\nabla \psi})>0.$
\end{lem}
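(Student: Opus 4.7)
The plan is to mirror the comparison argument used for Lemma~\ref{lem:comparison-uw}, but the situation is considerably simpler because both $w$ and $v$ are defined through equations involving the same regularized function $\tilde{\phi}$ and they share boundary values on $\partial B_r$, so $v-w \in W^{1,\tilde\phi}_0(B_r)$ is an admissible test function in both weak formulations. Testing the equations \eqref{eq:wpsi} and \eqref{eq:vw} with $v-w$ and subtracting yields
\[
\int_{B_r}\!\left(\frac{\tilde\phi'(|\nabla v|)}{|\nabla v|}\nabla v-\frac{\tilde\phi'(|\nabla w|)}{|\nabla w|}\nabla w\right)\!\cdot\nabla(v-w)\,dx
= -\int_{B_r}\frac{\tilde\phi'(|\nabla\psi|)}{|\nabla\psi|}\nabla\psi\cdot\nabla(v-w)\,dx.
\]
By Lemma~\ref{est:Orlicz}(2), the left-hand side is comparable to $\int_{B_r}\tilde\phi''(|\nabla v|+|\nabla w|)|\nabla v-\nabla w|^2\,dx$, which together with Lemma~\ref{est:Orlicz}(4) will control $\fint_{B_r}\tilde\phi(|\nabla v-\nabla w|)\,dx$ up to a small multiple of the energy $\fint_{B_r}\tilde\phi(|\nabla v|)+\tilde\phi(|\nabla w|)\,dx$, which by \eqref{est:energy}--\eqref{est:energy2} is controlled by $\fint_{B_r}\tilde\phi(|\nabla u|)\,dx+1$.

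The whole gain on the right-hand side comes from the H\"older continuity of $\nabla\psi$. Since $v-w$ vanishes on $\partial B_r$, we have $\int_{B_r}C\cdot\nabla(v-w)\,dx=0$ for any constant vector $C$, so we may replace $\frac{\tilde\phi'(|\nabla\psi|)}{|\nabla\psi|}\nabla\psi$ by its difference with the value $\frac{\tilde\phi'(|\nabla\psi(x_0)|)}{|\nabla\psi(x_0)|}\nabla\psi(x_0)$ at the center $x_0$ of $B_r$. This is exactly the mechanism used to bound $I_3$ in the proof of Lemma~\ref{lem:comparison-uw}: using Lemma~\ref{est:Orlicz}(2) together with the monotonicity of $t\mapsto\tilde\phi'(t)/t$ (when $p\ge 2$) or of $t\mapsto\tilde\phi'(t)/t^{p-1}$ (when $p<2$), and $|\nabla\psi(x)-\nabla\psi(x_0)|\le[\nabla\psi]_\beta r^\beta$, one obtains
\[
\left|\int_{B_r}\frac{\tilde\phi'(|\nabla\psi|)}{|\nabla\psi|}\nabla\psi\cdot\nabla(v-w)\,dx\right|
\lesssim r^{\beta}\fint_{B_r}\bigl(|\nabla v|+|\nabla w|\bigr)\,dx,
\]
with the implicit constant depending on $\overline{\nabla\psi}$ and $[\nabla\psi]_\beta$.

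Combining these two estimates and splitting with Young's inequality ($\kappa>0$), we arrive at
\[
\fint_{B_r}\tilde\phi(|\nabla v-\nabla w|)\,dx
\lesssim \kappa\left(\fint_{B_r}\tilde\phi(|\nabla u|)\,dx+1\right)+\kappa^{-1}r^{\beta}\left(\fint_{B_r}\tilde\phi(|\nabla u|)\,dx+1\right).
\]
Choosing $\kappa=r^{\beta/2}$ and using the reverse-type inequality \eqref{est:reverse} together with Lemma~\ref{lem:tilde-phi}(3) to replace $\fint_{B_r}\tilde\phi(|\nabla u|)\,dx$ by $\tilde\phi\bigl(\fint_{B_{2r}}|\nabla u|\,dx\bigr)+1$, we obtain
\[
\fint_{B_r}\tilde\phi(|\nabla v-\nabla w|)\,dx
\lesssim r^{\beta/2}\left[\tilde\phi\left(\fint_{B_{2r}}|\nabla u|\,dx\right)+1\right].
\]
Finally, Jensen's inequality on the left-hand side, together with \adec{q} of $\tilde\phi$ (which brings the factor $r^{\beta/2}$ inside $\tilde\phi$ as $r^{\beta/(2q)}$) and strict monotonicity of $\tilde\phi$, gives the claimed estimate.

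The main obstacle is the treatment of the $\nabla\psi$-integral on the right-hand side, but this is exactly the argument carried out for $I_3$ in Lemma~\ref{lem:comparison-uw}, and it can be transplanted to this setting without modification since $v-w$ has zero trace on $\partial B_r$.
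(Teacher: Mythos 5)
Your proposal is correct and matches the paper's own argument essentially step by step: test with $w-v\in W^{1,\tilde\phi}_0(B_r)$, use that $\nabla(w-v)$ integrates any constant vector to zero in order to replace the $\nabla\psi$-flux by its oscillation about $x_0$, bound that oscillation via $I_3$ from Lemma~\ref{lem:comparison-uw}, combine with Lemma~\ref{est:Orlicz}(2),(4) and the energy estimates \eqref{est:energy}--\eqref{est:energy2}, take $\kappa=r^{\beta/2}$, and finish with Jensen, \adec{q}, and monotonicity of $\tilde\phi$. The only cosmetic slip is a stray $\int$ versus $\fint$ in the display bounding the $\nabla\psi$-term; otherwise this is the paper's proof.
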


\begin{proof}
Since $w-v \in W_0^{1,\tilde{\phi}}(B_r)$, we infer from \eqref{eq:wpsi} and \eqref{eq:vw} that 
 \begin{align*}
&\int_{B_r} \left[ \frac{\tilde{\phi}'(|\nabla w|)}{|\nabla w|} \nabla w - \frac{\tilde{\phi}'(|\nabla v|)}{|\nabla v|} \nabla v \right] \cdot (\nabla w - \nabla v)\,dx \\
&=\int_{B_r} \left[ \frac{\tilde{\phi}'(|\nabla \psi|)}{|\nabla \psi|} \nabla \psi - \frac{\tilde{\phi}'(|\nabla \psi(x_0)|)}{|\nabla \psi(x_0)|} \nabla \psi(x_0) \right] \cdot (\nabla w - \nabla v)\,dx,
 \end{align*}
where $x_0$ is the center of $B_r.$ 
In a similar way as in the estimation of $I_3$ in the previous lemma, we see that 
 \begin{align*}
 &\fint_{B_r} \left[ \frac{\tilde{\phi}'(|\nabla \psi|)}{|\nabla \psi|} \nabla \psi - \frac{\tilde{\phi}'(|\nabla \psi(x_0)|)}{|\nabla \psi(x_0)|} \nabla \psi(x_0) \right] \cdot (\nabla w - \nabla v)\,dx\\
&\lesssim  r^{\beta}\left(  \fint_{B_r}  \tilde{\phi}(|\nabla u|) \,dx +1\right).
 \end{align*}
Therefore, by Lemma \ref{est:Orlicz} (2) \& (4) with \eqref{est:energy} and \eqref{est:energy2}, we obtain that
 \begin{align*}
& \fint_{B_r}  \tilde{\phi}(|\nabla w-\nabla v|)\,dx\\
 & \lesssim \kappa \left( \fint_{B_r}  \tilde{\phi}(|\nabla u |)\,dx+1\right) 
 + \kappa^{-1} r^{\beta}\left(  \fint_{B_r}  \tilde{\phi}(|\nabla u|) \,dx +1\right),
 \end{align*}
 which implies that $$ \fint_{B_r} \tilde{\phi}(|\nabla w - \nabla v|) \,dx \lesssim r^{\frac{\beta}{2}}  \left[  \tilde{\phi}\left(\fint_{B_{2r}} |\nabla u |\,dx\right)+ 1\right]$$
 by taking $\kappa:=r^{\frac{\beta}{2}}.$
 With Jensen's inequality, \adec{q} and increasingness of $\tilde \phi$, the claim follows in the same way as in the proof of the previous lemma.
\end{proof}

The following is the well-known technical iteration lemma (cf.  \cite[Lemma~7.1]{HasO_pp18} and \cite[Lemma~2.1, Chapter 3]{Giaquinta}).

\begin{lem}\label{lem:tech}
Let $g: [0,r_0] \rightarrow [0,\infty)$ be a non-decreasing function. Suppose that 
$$ g(\rho)  \leq C \left[ \left(\frac{\rho}{r}\right)^n +\epsilon \right]g(r) + C r^n$$
for all $0<\rho<r\le r_0$ with non-negative constant $C.$ Then for any $\mu\in(0,n)$ there exist $\epsilon_1=\epsilon_1(n,C,\mu)>0$ such that if $\epsilon < \epsilon_1$, we have 
$$ g(\rho)  \leq c \left(\frac{\rho}{r}\right)^{n-\mu} \left(  g(r)+r^{n-\mu} \right), $$
where $c$ is  a constant depending on $n, C$ and $\mu$.

\end{lem}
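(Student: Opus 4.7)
The plan is to turn the hypothesis into a one-step geometric contraction at a small fixed dilation ratio and then iterate, following the classical Campanato--Giaquinta iteration scheme. Write $\gamma := n-\mu \in (0,n)$ and choose $\tau \in (0,1)$, depending only on $n, C, \mu$, so small that $2C\tau^n \le \tau^{\gamma}$; this is possible because the ratio $\tau^{\gamma-n} = \tau^{-\mu}$ tends to $+\infty$ as $\tau \to 0^+$. Then set $\epsilon_1 := \tau^n$. For any $\epsilon \le \epsilon_1$ the hypothesis applied with $\rho = \tau r$ collapses to
\[
g(\tau r) \le C[\tau^n + \epsilon]\,g(r) + C r^n \le \tau^{\gamma}\, g(r) + C r^n,
\]
which holds for every $r \in (0, r_0]$.

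Next I would iterate this one-step inequality along the sequence $r, \tau r, \tau^2 r, \ldots, \tau^k r$. A straightforward induction (replacing $r$ by $\tau r$, then by $\tau^2 r$, and so on) produces
\[
g(\tau^k r) \le \tau^{k\gamma} g(r) + C r^n \sum_{j=0}^{k-1} \tau^{j\gamma + (k-1-j)n}.
\]
Factoring $\tau^{(k-1)\gamma}$ out of the sum reduces it to $\sum_{i=0}^{k-1}\tau^{i\mu}$, which is bounded by the convergent geometric series $(1-\tau^{\mu})^{-1}$. Hence
\[
g(\tau^k r) \le c\, \tau^{k\gamma}\bigl(g(r) + r^n\bigr),
\]
with a constant $c = c(n,C,\mu)$.

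Finally, for an arbitrary $\rho \in (0, r]$ I would pick the unique integer $k \ge 0$ with $\tau^{k+1} r < \rho \le \tau^k r$. Monotonicity of $g$ gives $g(\rho) \le g(\tau^k r)$, while the defining inequality yields $\tau^{k\gamma} \le \tau^{-\gamma}(\rho/r)^{\gamma}$. Absorbing $\tau^{-\gamma}$ into the constant produces $g(\rho) \le c\,(\rho/r)^{\gamma}\bigl(g(r) + r^n\bigr)$. Since $r \le r_0$ we have $r^n \le r_0^{\mu} r^{n-\mu}$, and enlarging $c$ one more time delivers the desired form $g(\rho) \le c(\rho/r)^{n-\mu}\bigl(g(r) + r^{n-\mu}\bigr)$.

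There is no genuine obstacle here; the only care needed is in the order of the choices, so that both $\tau$ and $\epsilon_1 = \tau^n$ depend only on $n, C, \mu$ and not on $r$, $r_0$, or $g$. This is exactly the iteration already used in e.g.\ Lemma~7.1 of \cite{HasO_pp18} and Chapter~3, Lemma~2.1 of the Giaquinta monograph cited in the paper, and the same proof applies verbatim.
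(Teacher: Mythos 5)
Your proof is correct and is precisely the standard Campanato--Giaquinta dyadic iteration that the paper itself does not reproduce but merely outsources to Lemma~7.1 of H\"ast\"o--Ok and Lemma~2.1 in Chapter~3 of Giaquinta's monograph. The one small blemish is the last step: replacing $r^n$ by $r_0^\mu r^{n-\mu}$ smuggles an $r_0$-dependence into the constant, which conflicts with the lemma's literal claim that $c$ depends only on $n$, $C$, $\mu$; this looseness is however already implicit in the statement as written and is immaterial here since every application in the paper takes $r_0<1$, where $r^n\le r^{n-\mu}$ directly.
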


Now we prove our main results. In the results, $\omega$ is the function from \wvaone{} for $\epsilon=\epsilon_0$ and $L \ge 1$ is the constant from  \azero{}.  
We also remark that \wvaone{} can be replaced by the conditions \aone{} and \wvaone{} with fixed $\epsilon>0$ which is sufficiently small depending on $n, p, q, L$. %We further remark that the results for balls instead of cubes can be also proved in the same way we use. 

%\marginpar{Need to check $\alpha$ dependence... {\color{blue} Should the second (wVA1) be without w?}}

%\begin{thm}\label{mainthm1}
%Let $\phi\in\Phi_w(\Omega) \cap C^{1}([0,\infty))$ with $\phi' $ satisfying    \azero{}, \inc{p-1}, \dec{q-1} for some $1<p\leq q$, and 
%let $u \in \mathcal{K}^{\phi}_{\psi}(\Omega)$ be a solution to the $\mathcal{K}^{\phi}_{\psi} (\Omega)$-obstacle problem. 
% If $\phi$ satisfies  \wvaone{}, then  $u \in C_{\loc}^{0,\alpha}(\Omega)$ for any $\alpha \in (0,1).$
% \end{thm}

\begin{proof}[Proof of Theorem~\ref{thm:C^1,alpha} (i)]

Let $r_0 \in (0,1)$ be a sufficiently small number which will be determined later. Consider any $B_{r_0} \subset \Omega' \Subset \Omega$ assuming that $r_0>0$ satisfies \eqref{ass:rsmall} with $r=r_0$,
and let $0<2r\le r_0.$

First we recall Jensen's inequality and \eqref{est:energy2} followed by Lemma \ref{lem:tilde-phi} (3) and \eqref{est:reverse} to obtain

\begin{align*}
\tilde \phi \left (\fint_{B_r} |\nabla v| \, dx \right ) &\leq \fint_{B_r} \tilde \phi(|\nabla v|) \, dx \lesssim \fint_{B_r} \tilde \phi(|\nabla u|)  \, dx+1 \\
& \lesssim  \left (\fint_{B_r} \phi(x, |\nabla u|)^{1+ \sigma_0} \, dx\right )^{\frac{1}{1+\sigma_0 }} +1\lesssim \tilde \phi \left (\fint_{B_{2r}} |\nabla u| \, dx\right ) +1.
\end{align*}
Now if we apply $\tilde \phi^{-1}$ to both sides of the inequality and use \adec{} to insert the $``+1"$ inside the $\Phi$-function we get
\begin{align}\label{eq:v-u-energy}
\fint_{B_r} |\nabla v| \, dx \lesssim \fint_{B_{2r}} |\nabla u| +1 \, dx.
\end{align}

For simplicity, we write $\omega_0(r):= (\omega(2r)^{p/q}+r^{\gamma} +r^{\beta})^{\frac1{2q}}.$
 Then if $\rho< \frac{r}{2},$ by Lemmas~\ref{lem:comparison-uw},\ref{lem:comparison-wv} and \eqref{v-Lip},
we have that 
 \begin{align*}
  \int_{B_{\rho}} |\nabla u |\,dx &\le  \int_{B_{r}} |\nabla u-\nabla w |\,dx +  \int_{B_{r}} |\nabla w-\nabla v|\,dx+ \int_{B_{\rho}}|\nabla v |\,dx\\
  & \lesssim\omega_0(r) \int_{B_{2r}} |\nabla u | +1\,dx +  \rho^n \sup_{B_{r/2}} |\nabla v |\\
  & \lesssim \omega_0(r_0)\int_{B_{2r}} |\nabla u | +1\,dx +  \rho^n \fint_{B_{r}} |\nabla v |\,dx \\
  &\lesssim \left( \omega_0(r_0)+\left(\frac{\rho}{r}\right)^n \right)\int_{B_{2r}} |\nabla u |\,dx + r^n.
    \end{align*}
Otherwise, i.e. if $\frac{r}{2}\leq \rho< 2r,$ the above estimate is clear because $ \frac12 \leq \frac{\rho}{r}.$

Therefore, by choosing $r_0$ so small that 
$\omega_0(r_0) = (\omega(r_0)+r_0^{\gamma} +r_0^{\beta})^{\frac1{2q}} < \epsilon_1,$
where $\epsilon_1$ is given in Lemma~\ref{lem:tech}, we have from Lemma~\ref{lem:tech} that for any $\mu\in(0,n),$ 
 \begin{align}\label{DuL1bdd}
 \int_{B_{\rho}} |\nabla u |\,dx  \lesssim\left(\frac{\rho}{r_0} \right)^{n-\mu} \left(  \int_{B_{r_0}} |\nabla u |\,dx  +r_0^{n-\mu}\right) % \lesssim\left(\frac{\rho}{r_0} \right)^{n-\mu} \left(  \int_{\Omega'} |\nabla u |\,dx  +1\right)
    \end{align}
    for all $B_{\rho} \subset \Omega'$ with $\rho \in (0,r_0).$ In addition, $B_{\rho} \subset \Omega'$ is arbitrary and the implicit constant is universal.  Hence, we take  $1-\mu=\alpha$ to obtain that $u \in C_{\loc}^{0,\alpha}(\Omega')$  by Morrey type embedding.   
\end{proof}

%\begin{thm}\label{mainthm2}Let $\phi\in\Phi_w(\Omega) \cap C^{1}([0,\infty))$ with $\phi' $ satisfying    \azero{}, \inc{p-1}, \dec{q-1} for some $1<p\leq q$, and 
%let $u \in \mathcal{K}^{\phi}_{\psi}(\Omega)$ be a solution to the $\mathcal{K}^{\phi}_{\psi} (\Omega)$-obstacle problem. If $\phi$ satisfies \wvaone{} with
% $$\omega(r) \lesssim r^{\delta} \text{ for all $r \in (0,1]$  and for some }  \delta >0,$$
% then $u \in C_{\loc}^{1,\alpha}(\Omega) $ for some  $\alpha \in (0,1).$ % Here, $\alpha$ depends on $n,p,q,L,\delta, [\nabla \psi]_{\beta},$ and $ \overline{\nabla \psi}.$
%
% \end{thm}

\begin{proof}[Proof of Theorem~\ref{thm:C^1,alpha} (ii)]
Fix $\Omega' \Subset \Omega$. 
Recall from \eqref{nablavtau} that for any $0<\rho<\frac{r}{2}$
 $$ \fint_{B_{\rho}}  |\nabla v - (\nabla v)_{B_{\rho}} | \,dx \leq c \left(\frac{\rho}{r}\right)^{\alpha_0} \fint_{B_{r/2}} |\nabla v |\,dx$$
for some $\alpha_0 >0$.
From \eqref{DuL1bdd}, we note that for $\mu \in (0,1),$
$$ \fint_{B_{2r}} |\nabla u| \,dx \leq c_\mu r^{-\mu}$$
for all $B_{2r} \subset \Omega'$ with $r \in (0,\frac{r_0}{2}]$, where $r_0$ is from the proof of Theorem~\ref{thm:C^1,alpha} (i) and  the constant $c_\mu \ge 1$ depends on $n, p, q, L, r_0,$ and $\mu$.  
 Let us consider a small $r <\frac{r_0}{2}$ which will be determined later. 
From Lemmas~\ref{lem:comparison-uw}, \ref{lem:comparison-wv} and \eqref{eq:v-u-energy},
we then derive that  for $0<\rho<\frac{r}{2}$,
  \begin{align*}
 & \fint_{B_{\rho}} |\nabla u - (\nabla u)_{B_{\rho}} | \,dx \leq 2 \fint_{B_{\rho}} |\nabla u - (\nabla v)_{B_{\rho}} | \,dx \\
  &\quad   \leq 2 \fint_{B_{\rho}} |\nabla u - \nabla w | \,dx  +2 \fint_{B_{\rho}} |\nabla w - \nabla v | \,dx +   2\fint_{B_{\rho}} |\nabla v - (\nabla v)_{B_{\rho}} | \,dx\\
  &\quad   \lesssim \left(\frac{r}{\rho}\right)^n  \fint_{B_{r}} |\nabla u - \nabla w | \,dx  + \left(\frac{r}{\rho}\right)^n \fint_{B_{r}} |\nabla w - \nabla v | \,dx +    \left(\frac{\rho}{r}\right)^{\alpha_0}  \fint_{B_{r/2}} |\nabla v |\,dx\\
    &\quad   \lesssim    \left(\frac{r}{\rho}\right)^n (\omega(2r)^{p/q}+r^{\gamma} +r^{\beta})^{\frac1{2q}} \left(\fint_{B_{2r}} |\nabla u |\,dx+ 1\right) +    \left(\frac{\rho}{r}\right)^{\alpha_0}  \fint_{B_{r/2}} |\nabla v |\,dx\\
&\quad   \lesssim \left( r^{\delta_0} \left(\frac{r}{\rho}\right)^n +  \left(\frac{\rho}{r}\right)^{\alpha_0}\right)  \left(\fint_{B_{2r}} |\nabla u |\,dx+ 1\right)  \leq c_{\mu}  r^{-\mu} \left( r^{\delta_0} \left(\frac{r}{\rho}\right)^n +  \left(\frac{\rho}{r}\right)^{\alpha_0}\right),
    \end{align*}
    where $\delta_0 : = \frac{1}{2q}\min\{ \frac{\delta p}{q}, \gamma, \beta \}.$
   In the same way as in the proof of Theorem~7.4 in \cite{HasO_pp18}, we obtain $\nabla u \in C_{\loc}^{\alpha}(\Omega')$ for some $\alpha>0.$ 
       \end{proof}

\bigskip

\noindent\small{
\textsc{A. Karppinen}}\\
\small{Department of Mathematics and Statistics,
FI-20014 University of Turku, Finland}\\
\footnotesize{\texttt{arttu.a.karppinen@utu.fi}}\\
\noindent\small{
\textsc{M.\ Lee}}\\
\small{Department of Mathematics, Pusan National University, Busan 46241, Republic of Korea}\\
\footnotesize{\texttt{mikyounglee@pusan.ac.kr}}
\end{document}